\numberwithin{equation}{section}
\title{Computational lower bounds for multi-frequency group synchronization}
\author[1]{Anastasia Kireeva\thanks{Email: \texttt{anastasia.kireeva@math.ethz.ch}.}}
\author[1]{Afonso S.\ Bandeira\thanks{Email: \texttt{bandeira@math.ethz.ch}.}}
\author[2]{Dmitriy Kunisky\thanks{Email: \texttt{dmitriy.kunisky@yale.edu}. Partially supported by ONR Award N00014-20-1-2335 and a Simons Investigator Award to Daniel Spielman.}}
\affil[1]{Department of Mathematics, ETH Z\"{u}rich}
\affil[2]{Department of Computer Science, Yale University}
\date{June 5, 2024}
\begin{document}

\maketitle
\begin{abstract}
    We consider a group synchronization problem with multiple frequencies which involves observing pairwise relative measurements of group elements on multiple frequency channels, corrupted by Gaussian noise. We study the computational phase transition in the problem of detecting whether a structured signal is present in such observations by analyzing low-degree polynomial algorithms. We show that,  assuming the low-degree conjecture, in synchronization models over arbitrary finite groups as well as over the circle group $SO(2)$, a simple spectral algorithm is optimal among algorithms of runtime $\exp(\tilde{\Omega}(n^{1/3}))$ for detection from an observation including a constant number of frequencies. 
    Combined with an upper bound for the statistical threshold shown in Perry et al.\ \cite{perryOptimalitySuboptimalityPCA2016}, our results indicate the presence of a statistical-to-computational gap in such models with a sufficiently large number of frequencies. 
\end{abstract}

\clearpage

\tableofcontents

\clearpage

\section{Introduction}

Identifying and recovering a hidden structured object from noisy matrix-valued observations is a classical problem in statistics and machine learning \cite{lee2007nonlinear,hastie2009elements}.
Furthermore, many such problems incorporate a significant amount of group structure which relates to the underlying physics or symmetry of the input data in the problem.
Such applications include problems in electron microscopy, image processing, computer vision, and others. 
In the specific task of group synchronization, the goal is to recover unknown group elements from their noisy pairwise measurements. 
Besides its practical importance, this task also involves a mathematically intriguing combination of algebraic structure and statistical inference. 

Orientation estimation in cryo-electron microscopy (cryo-EM) serves as an instructive example \cite{Singer_CryoEM} of a synchronization problem. Cryo-EM is a technique for the analysis of three-dimensional properties of biological macro-molecules based on their two-dimensional projections. In order to reconstruct the molecule density, one needs (after some processing of the initially two-dimensional data) to determine unknown rotations $g_u \in SO(3)$ from noisy measurements of their relative alignments $g_u g_v^{-1}$. Other examples include community detection in graphs (which can be cast as synchronization over $\bb Z_2$) \cite{Abbe_z2,Deshpande,Javanmard_z2}, multireference alignment in signal
processing (which involves synchronization over $\bb Z_L$) \cite{bandeira2014multireference}, network clock synchronization \cite{giridhar2006distributed}, and many others \cite{cucuringu2012sensor,bandeira2014multireference,peters2015sensor}. %

In a general synchronization problem over a group $G$, one aims to recover the group-valued vector $u = (g_1, \dots, g_n) \in G^n$ from noisy pairwise information about $g_k g_j^{-1}$ for all (or, in some cases, a subset of) pairs $(k, j)$. A natural way to model this is to postulate that we obtain a function of $g_k g_j^{-1}$, corrupted with additive Gaussian noise,
\begin{equation}\label{eq:fgigjinv}
Y_{kj} = f(g_k g_j^{-1}) + W_{kj}
\end{equation}
for i.i.d.\ Gaussian random variables $W_{kj}$. Henceforth we will focus on the setting where measurements are available for all pairs $(k, j)$.

We will return to this general setting later, but for now we focus on the specific case of angular synchronization, where the objective is to determine phases $\varphi_1, \dots, \varphi_n \in [0, 2\pi]$ from their noisy relative observation $\varphi_k - \varphi_j \mod 2 \pi$ \cite{singer2011angular,Bandeira_angularsynch}. This problem can be seen as synchronization over $SO(2)$, or equivalently, over the complex circle group $U(1) = \{e^{i \varphi}, \varphi \in [0, 2 \pi)\} $.
We denote these isomorphic groups by
\begin{equation}
    \bb S \colonequals SO(2) \cong U(1) .
\end{equation}

Each pairwise alignment between elements $k$ and $j$ is expressed as $e^{i (\varphi_k - \varphi_j)}$, and the obtained noisy observation is
$$
Y_{kj} = \lambda  x_k \bar{x}_j + W_{kj}.
$$
Here $x_k = e^{i \varphi_k}$ and  $\bar{x}_j = e^{-i\varphi_j}$ denotes the complex conjugate. The scalar parameter $ \lambda$ is a \emph{signal-to-noise ratio}, and $W_{kj}$ is Gaussian white noise as above. In this case, the observation can also be seen as a rank-one perturbation of the Wigner random matrix $W$, 
\begin{equation}\label{eq:informal_synch_def}
Y = \lambda x x^* + W.
\end{equation}
This model is also referred to as a \emph{Wigner spiked matrix model}, has been studied extensively in the literature, and admits a sharp phase transition in the feasibility of estimating or detecting $x$ dictated by a variant of the Baik--Ben Arous--P\'{e}ch\'{e} (BBP) transition \cite{BBP,FeralPeche,BenaychGeorges_RaoNadakuditi}. %
Above a certain critical value $\lambda > \lambda_*$, detection is possible based on the top eigenvalue of the observation matrix.
Moreover, the top eigenvector of $Y$ correlates non-trivially with the true signal $x$. 
Below the threshold, when $\lambda < \lambda_*$, one cannot detect the signal reliably from the top eigenvalue and eigenvector as the dimension grows to infinity. %
The method of extracting the signal information from the top eigenvalue and its corresponding eigenvector is often referred to as \emph{principal component analysis (PCA)}, and the corresponding threshold $\lambda_*$ as the \emph{spectral threshold}. The PCA estimator does not take into account any structural prior information we might have on the signal, such as sparsity or entrywise positivity. Consequently, the spectral threshold depends only on the $\ell^2$ norm of the signal $\|x\|$,
and for some choices of a prior distribution of $x$, the performance of PCA is sub-optimal compared to the algorithms exploiting this structural information about the signal \cite{Zou_sparsePCA,dAspremont_sparsePCA,Johnstone_sparsePCA,Montanari2014NonNegativePC}. 

Nevertheless, while it is possible to improve on PCA for some choices of sparse priors, for many dense priors, no algorithm can beat the spectral threshold \cite{Deshpande,perryOptimalityPCA2016JSTOR}. Examples of such settings for synchronization problems cast as spiked matrix models include $\bb Z_2$ synchronization, angular synchronization, and other random matrix spiked models. 
This situation changes when considering a model with multiple frequencies, where the addition of frequencies introduces more signal information, thus potentially lowering the threshold at which detection or estimation become feasible.

In this work, we consider obtaining measurements through several frequency channels, which is motivated by the Fourier decomposition of the non-linear objective of the non-unique games problem \cite{Bandeira_NonUniqueGames_2020} (see \Cref{rmk:noisy indicators}). 
In the angular synchronization case, this translates to obtaining the following observations:
\begin{equation}\label{eq:intro_ang_def}
\left\{\begin{split}
    Y_1 &= \frac{\lambda}{n} x x^* + \frac{1}{\sqrt{n}}W_1,\\
    Y_2 &= \frac{\lambda}{n} x^{(2)} (x^{(2)})^* +\frac{1}{\sqrt{n}} W_2,\\
    &\vdots\\
    Y_L &= \frac{\lambda}{n} x^{(L)} (x^{(L)})^* +\frac{1}{\sqrt{n}} W_L.
\end{split}
\right.\end{equation}
where $x^{(k)}$ denotes the entrywise $k$th power, and $W_1, \dots, W_L$ are independent noise matrices whose off-diagonal entries have unit variance (refer to \Cref{def:angular_synch_U1} for a precise definition). With the scaling above, PCA (that is, computing the largest eigenvalue) succeeds in detecting a signal in any one of the $Y_i$ past the spectral threshold $\lambda > 1$. %

One may expect, that by combining information over the $L$ frequencies, it ought to be possible to detect the signal reliably once $\lambda > 1/\sqrt{L}$. 
This intuition comes from the fact that given $L$ independent draws of a single frequency, PCA would indeed detect the signal once $\lambda > 1/\sqrt{L}$. 
This suggests the question: do independent observations of $L$ \emph{different} frequencies provide as much information about the signal as $L$ independent observations of the \emph{same} frequency?
Because of the extra algebraic structure of the multi-frequency model, the phase transitions are not well-understood even in the case of two frequencies.
It is at least known that the above hope is too good to be true: while our intuition would lead us to believe that it should be possible to detect the signal from two frequencies once $\lambda > 1 / \sqrt{2} \approx \num{0.707}$, actually it is provably impossible (information-theoretically; that is, with unbounded computational budget) for any $\lambda < \num{0.937}$ \cite{perryOptimalitySuboptimalityPCA2016}.
On the other hand, once $\lambda > 1$, then the signal can be detected from any one of the $Y_i$ using PCA.
Thus multiple frequencies certainly are not as useful as independent observations of a single frequency.
This suggests another question: are multiple frequencies useful \emph{at all}?
That is, in this setting, even with $L = 2$ frequencies, is it possible to detect the signal for any $\lambda < 1$?

The same questions may be asked for synchronization over a finite group $G$.
The results described below hold for arbitrary finite groups in the setting to be described in \Cref{sec:finite_groups}, but for the sake of concreteness we may consider synchronization over the cyclic group $G = \bb Z / L \bb Z \eqqcolon \bb Z_L$ with all frequencies excluding the trivial one and taking one per conjugate pair (this will correspond to non-redundant irreducible representations). The number of such frequencies amounts to $\floor{L/2}$. This model may be viewed as a discrete variant of angular synchronization where $\varphi_i \in \{0, \frac{1}{L} 2\pi, \dots, \frac{L - 1}{L}2\pi\}$.
For such a synchronization model, the work of \cite{perryOptimalitySuboptimalityPCA2016} showed a similar impossibility result: it is impossible to detect the signal once %
\begin{equation}\label{eq:stat-lb}
    \lambda < \sqrt{\frac{2(L-1) \log(L-1)}{L (L-2)}}
\end{equation}
for $L > 2$ and $\lambda < 1$ for $L=2$. In particular, it implies that the statistical threshold $\lambda_{\text{stat}}(\bb Z_L)$ is bounded below by the right-hand side value of \eqref{eq:stat-lb}. 
Conversely, they also showed that, for sufficiently large $L$, there exists an \emph{inefficient} algorithm for detection that succeeds once 
\begin{equation}
    \label{eq:stat-ub}
    \lambda > \sqrt{\frac{4 \log L}{L - 1}} \gg \frac{1}{\sqrt{L}}.
\end{equation}
As $L$ grows, these two bounds provide a tight characterization of the scaling $\Theta(\sqrt{\log L / L})$ of the statistical threshold for this problem. 
Moreover, once $L \geq 11$, the quantity in \eqref{eq:stat-ub} is smaller than 1, and thus there is a computationally inefficient algorithm that is superior to PCA applied to a single frequency.
However, as for angular synchronization, it remained unknown whether this algorithm could be made efficient, and more generally what the limitations on computationally efficient algorithms are in this setting.
Consequently, this motivates the following question for group synchronization problems in general:
\begin{center} 
\emph{Does detection by an efficient algorithm become possible at a lower signal-to-noise ratio compared to a single-frequency model?}
\end{center} 

Using non-rigorous derivations from statistical physics together with numerical computations, the authors of \cite{Perry_AMP_synch} predicted that the answer is negative. This conjecture is predicated on the optimality of AMP-type algorithms, which do not always capture the optimal threshold \cite{wein2019kikuchi}. In this work, we derive computational lower bounds for the synchronization problems over $\bb S$ and over finite groups using the low-degree polynomials framework. %
Before presenting the main result for synchronization over $\bb S$, we first formalize the concept of reliable detection. This concept involves distinguishing a probability measure with a planted signal from a measure containing only pure noise. In our case, the latter corresponds to the distribution of the noise matrices of the corresponding structure, or, equivalently, of observations $Y_1, \dots, Y_L$ where $\lambda$ is set to zero. 
\begin{definition}\label{def:strong-detection}
    We say that a sequence of measurable functions $f_n: \mathcal S \to \{\rm p, \rm q\}$ achieves \emph{strong detection} between a sequence of pairs of probability measures $\bb P_n$ and $\bb Q_n$ if
\begin{align*}
&\text{if }Y \sim \bb P_n \text{ then } f_n(Y) = \rm p \quad \text{with probability } 1- o(1);\\
&\text{if }Y \sim \bb Q_n \text{ then } f_n(Y) = \rm q \quad \text{with probability } 1- o(1).
\end{align*}
\end{definition}
\noindent
In words, the test function $f_n$ is such that, in the limit of $n \to \infty$, the probability of the test function making a mistake (either a Type~I or Type~II error, in statistical language) is diminishing.

\begin{theorem}[$\bb S$-synchronization lower bound; informal]\label{thm:angular_comp_tr_intro}
    Consider the angular synchronization model $\eqref{eq:intro_ang_def}$ with $L$ frequencies, where $L$ is a constant that does not depend on $n$. 
    If the Low-Degree Conjecture holds (see \Cref{sec:low-degree-intro}), then for any $\lambda \le 1$, any algorithm for strong detection requires runtime at least $\exp(\tilde{\Omega}(n^{1/3}))$.
\end{theorem}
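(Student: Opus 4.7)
The strategy is to apply the low-degree polynomial method. Let $\bb P_n$ and $\bb Q_n$ denote the planted and the pure-noise distributions of the observations $(Y_1,\dots,Y_L)$ from the model \eqref{eq:intro_ang_def}, and let $L_n \colonequals d\bb P_n/d\bb Q_n$ be their likelihood ratio. Under the Low-Degree Conjecture (as formalized in \Cref{sec:low-degree-intro}), it suffices to show that the projection $L_n^{\le D}$ of $L_n$ onto polynomials of degree at most $D = \tilde\Theta(n^{1/3})$ in the entries of $Y_1,\dots,Y_L$ satisfies $\|L_n^{\le D}\|_{L^2(\bb Q_n)} = O(1)$ whenever $\lambda \le 1$; this then rules out any strong detector of runtime $\exp(\tilde{O}(D)) = \exp(\tilde{\Omega}(n^{1/3}))$.

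The first step is to expand $L_n^{\le D}$ in the orthonormal Hermite basis of $L^2(\bb Q_n)$. Since $\bb Q_n$ is a product of $L$ independent scaled Wigner ensembles, this basis is naturally indexed by multigraphs $G=(V,E)$ on vertex set $[n]$ whose edges carry a frequency label $\ell \in [L]$, with multiplicities encoding Hermite degrees. Using the independence and uniformity on the unit circle of each coordinate $e^{i\varphi_v}$ of the planted signal, the squared norm reduces to the combinatorial expression
\[
\|L_n^{\le D}\|^2 \;=\; \sum_{|E(G)| \le D} \frac{1}{c_G}\left(\frac{\lambda^2}{n}\right)^{|E(G)|} \mathds{1}[G \text{ balanced}],
\]
where $c_G$ is a standard symmetry factor and $G$ is called \emph{balanced} if at every vertex $v$ the signed sum of frequencies of incident edges vanishes. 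This balance constraint, stemming from the vanishing of $\bb E_\varphi[e^{ik\varphi}]$ for $k \ne 0$, is precisely what distinguishes the multi-frequency analysis from the single-frequency Wigner spike and is the mechanism through which additional frequencies could in principle help detection.

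The technical core is bounding the sum above. I would classify balanced colored multigraphs by their shape (unlabeled isomorphism class) and bound each shape's total contribution by $(\lambda^2)^{|E(G)|}\,n^{|V(G)|-|E(G)|}$ up to a manageable symmetry factor, since a given shape admits at most $n^{|V(G)|}$ labelings into $[n]$. Tree components cannot satisfy the balance condition with nontrivial frequencies, which forces $|V(G)| \le |E(G)|$ on every connected component; the tight cases are disjoint unions of short balanced cycles and parallel-edge blocks. Grouping these contributions, one expects the resulting generating function to converge to a constant for $\lambda \le 1$, by analogy with the standard Wigner spike analysis where the edge factor $\lambda^2/n$ is compensated exactly by the vertex factor $n$.

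The main obstacle, and in my expectation the origin of the $n^{1/3}$ scale, is the control of graphs with many short balanced cycles. A collection of small dense balanced subgraphs can be realized in $\exp(\Theta(|V|))$ colored ways, and the per-graph weight $(\lambda^2/n)^{|E|}\,n^{|V|}$ must overcome this enumeration. I would handle this via a careful encoding argument, in the spirit of those used in the low-degree analyses of spiked tensor PCA and related synchronization models, to show that these contributions remain $O(1)$ as long as $|E(G)|,|V(G)| \lesssim n^{1/3}/\mathrm{polylog}(n)$; pushing $D$ above this threshold would allow short cycles to proliferate uncontrollably. Once $\|L_n^{\le D}\|^2 = O(1)$ is established for $D = \tilde\Theta(n^{1/3})$, \Cref{thm:angular_comp_tr_intro} follows from a direct invocation of the Low-Degree Conjecture.
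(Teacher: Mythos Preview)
Your proposal takes a genuinely different route from the paper. The paper does not work directly with a Hermite/multigraph expansion. Instead, it uses the closed form for the LDLR of a Gaussian additive model to write
\[
\|L_n^{\le D}\|^2 \;=\; \sum_{d=0}^D \frac{1}{d!}\,\E_x\!\left(\frac{\lambda^2}{n}\sum_{\ell=1}^L \bigl|\langle x^{(\ell)},\bm 1_n\rangle\bigr|^2\right)^{\!d},
\]
shows this is termwise dominated by the analogous expression for the $\bb Z_L$ prior (phases uniform on $L$th roots of unity rather than on the whole circle), and then for $\bb Z_L$ rewrites the inner random variable via Parseval as the Pearson-type quadratic form $\tfrac{L-1}{2}\sum_\ell n_\ell^2 - \tfrac12\sum_{\ell\neq k} n_\ell n_k$ in the multinomial counts $n_0,\dots,n_{L-1}$. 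The remainder of the proof is a recursive moment bound on this quadratic form; the $n^{1/3}$ scale arises concretely from factors $(d^3/n)^{k/2}$ appearing in that recursion, and the sharp threshold $\lambda\le 1$ falls out because the final bound is $\sum_d \lambda^{2d} d^{2L}$. No graph enumeration is needed.

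Your multigraph formulation is essentially the paper's intermediate set-counting identity (the sets $M_d^{\bb S}$ of balanced frequency-labeled edge lists), so the starting point is legitimate; but the proposal is too vague at the decisive step. You assert that the tight shapes are unions of short balanced cycles and that an unspecified ``encoding argument'' will control them up to $D\sim n^{1/3}$, yet you give no indication of what the encoding is or why the series converges precisely for $\lambda\le 1$ rather than some other constant. Your own heuristic that balanced colorings proliferate as $\exp(\Theta(|V|))$ would, taken at face value, destroy the sharp threshold. If you want to push the graph route through, you would need an explicit enumeration of balanced $[L]$-colored multigraphs showing the total contribution is dominated by $\sum_d \lambda^{2d}\,\mathrm{poly}(d)$; the paper sidesteps this entirely via the reduction to $\bb Z_L$ and the multinomial-moment computation.
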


To formulate our results over general finite groups $G$, we consider the Peter-Weyl decomposition (a generalization to general compact groups of the Fourier decomposition) of an observation of the form $f(g_kg_j^{-1})$.
This leads to the viewpoint of observing the signal through noisy observations of its image under the irreducible representations of $G$. Informally, each pairwise measurement corresponding to a representation $\rho$ is a block matrix with blocks given by
\begin{equation}\label{eq:def_mfreq_groups_intro}
Y_{kj}^{\rho} = \frac{\lambda}{n} \rho(g_k) \rho(g_j^{-1}) + \frac{1}{\sqrt{n}} W_{kj} \text{ for each } \rho \in \Psi,
\end{equation}
where $W_{kj}$ is a Gaussian noise of appropriate type and covariance matrix. Each observation $Y_{kj}$ can be scalar or matrix depending on the dimension of representation $\rho$. If the list of representations $\Psi$ contains all irreducible representations excluding the trivial one, and, for complex representations, taking only one per conjugate pair, then our main result is as follows.

\begin{theorem}[Finite group synchronization lower bound; informal]
    Consider the Gaussian synchronization model over a finite group $G$ of size $L$ over all irreducible representations (excluding the trivial and redundant ones, as above), where $L$ is a constant that does not depend on $n$.
    If the Low-Degree Conjecture holds (see \Cref{sec:low-degree-intro}), then for any $\lambda \leq 1$, any algorithm for strong detection requires runtime at least $\exp(\tilde{\Omega}(n^{1 / 3}))$.
\end{theorem}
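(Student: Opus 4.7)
The plan is to apply the low-degree polynomial framework: I would bound the squared norm of the degree-$D$ projection of the likelihood ratio, $\|L^{\leq D}\|^2 = \mathbb{E}_{\mathbb{Q}_n}\bigl[(L^{\leq D})^2\bigr]$, so that the Low-Degree Conjecture upgrades boundedness at $D = n^{1/3}/\mathrm{polylog}(n)$ into the claimed runtime lower bound of $\exp(\tilde{\Omega}(n^{1/3}))$. Since \eqref{eq:def_mfreq_groups_intro} is an additive Gaussian channel with a random planted signal drawn from $\mathrm{Unif}(G^n)$, the standard Gaussian identity for the projected likelihood norm gives
$$\|L^{\leq D}\|^2 \;=\; \sum_{d=0}^{D} \frac{1}{d!}\, \mathbb{E}_{g,g' \sim \mathrm{Unif}(G^n)}\bigl[\langle T(g), T(g')\rangle^d\bigr],$$
where $T(g)$ collects the signal blocks $\frac{\lambda}{\sqrt{n}}\rho(g_k)\rho(g_j^{-1})$ over $\rho \in \Psi$ and all pairs $(k,j)$, and the overlap is evaluated in the inner product induced by the prescribed noise covariance.

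I would then exploit the representation-theoretic structure to simplify this overlap. For unitary $\rho$, using $\rho(g)^* = \rho(g^{-1})$, cyclicity of the trace, and the change of variables $h_k := g_k^{-1} g'_k$ (which is i.i.d.\ uniform on $G$), the overlap collapses to
$$\langle T(g), T(g')\rangle \;=\; \frac{\lambda^2}{n} \sum_{\rho \in \Psi} c_\rho \,\Bigl\|\sum_{k=1}^{n} \rho(h_k)\Bigr\|_F^2 \;=\; \frac{\lambda^2}{n} \sum_{k,j=1}^{n} F(h_k h_j^{-1}),$$
where $F(h) := \sum_{\rho \in \Psi} c_\rho\, \chi_\rho(h)$ is a class function on $G$ and the constants $c_\rho$ are determined by the noise normalization chosen in \eqref{eq:def_mfreq_groups_intro}. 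This converts the low-degree calculation into the problem of controlling moments of a character sum in i.i.d.\ uniform group elements $h_1,\ldots,h_n$.

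The core technical step is then to prove a bound of the form
$$\mathbb{E}_{h}\Bigl[\bigl(\tfrac{1}{n}\sum_{k,j=1}^n F(h_k h_j^{-1})\bigr)^d\Bigr] \;\le\; C^d\, \alpha(d)$$
for a constant $C$ depending only on $L = |G|$ and some slowly growing $\alpha(d)$. To obtain this, I would expand the $d$-th power into a sum over tuples $((k_a,j_a,\rho_a))_{a=1}^d$ and index each term by the multigraph on $[n]$ whose edges are the pairs $\{k_a,j_a\}$. Applying Schur orthogonality at each vertex, an index appearing $t$ times with labels $\rho_{a_1},\ldots,\rho_{a_t}$ contributes only when the tensor product of the corresponding representations contains the trivial representation. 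A configuration with $v$ distinct vertices among the $2d$ positions contributes $O(n^{v-2d})$ from the index count, so the dominant contribution comes from the maximally-paired diagrams at $v = d$, which mirror the moment structure of the spiked Wigner model at the critical threshold $\lambda = 1$ up to combinatorial factors depending only on $|\Psi|$.

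The hardest single step will be controlling these moments uniformly in $d$ up to $d = n^{1/3}$: one must rule out the possibility that interference across different representations produces constructive patterns that boost the overlap above the single-frequency ceiling of $\lambda^2 \leq 1$. This amounts to showing that the class function $F$ does not align with itself across vertices of the expansion multigraph in a way that overwhelms the expected $n^{v-2d}$ decay, which I expect to require invoking Clebsch--Gordan-type decompositions of $\rho \otimes \rho'$ together with the boundedness of $\|F\|_\infty$ by a constant depending only on $L$. Once this estimate is established, the series $\sum_{d=0}^{D}(\lambda^2)^d \alpha(d)/d!$ is finite, so $\|L^{\leq D}\|^2 = O(1)$ at $D = n^{1/3}/\mathrm{polylog}(n)$, and the Low-Degree Conjecture delivers the claimed $\exp(\tilde{\Omega}(n^{1/3}))$ runtime lower bound.
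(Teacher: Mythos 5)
Your setup through the reduction to moments of the character sum $\frac{\lambda^2}{n}\sum_{k,j}F(h_kh_j^{-1})$ matches the paper: this is exactly the content of its Gaussian-additive-model LDLR lemma plus the translation-invariance step in Proposition~\ref{prop:ldlr_basic_expression}. From there, however, the paper does not do a diagrammatic expansion. It uses Peter--Weyl orthogonality (Parseval on $L^2(G)$) to collapse $\sum_\rho\frac{\beta_\rho d_\rho}{2}\bigl\|\sum_k\rho(h_k)\bigr\|_F^2$ \emph{exactly} into the Pearson-type statistic $\frac{L-1}{2}\sum_g n_g^2-\frac{1}{2}\sum_{g\ne f}n_gn_f$ in the multinomial counts $n_g=\#\{k:h_k=g\}$, and then bounds its $d$-th moment by a Helmert-type diagonalization of the quadratic form followed by a one-variable-at-a-time conditional-expectation recursion, arriving at $\|L_n^{\le D}\|^2\le\sum_d\lambda^{2d}d^{2L}$.

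The gap in your plan sits precisely where you flag ``the hardest single step,'' and it is quantitative. First, a bookkeeping slip: with $d$ factors each carrying a $1/n$, a configuration with $v$ distinct vertices contributes $n^{v-d}$, not $n^{v-2d}$; under your exponent the $v=d$ diagrams would vanish rather than dominate. Second, and more seriously, the bound you posit, $\E\bigl[(\frac{1}{n}\sum F)^d\bigr]\le C^d\alpha(d)$ with $\alpha$ slowly growing and $C=C(L)$, cannot be the right shape: the statistic converges to a scaled $\chi^2_{L-1}$, so its $d$-th moment grows like $d!\,c^d\,\mathrm{poly}(d)$, and after dividing by $d!$ the series $\sum_d\lambda^{2d}c^d\,\mathrm{poly}(d)$ converges iff $\lambda^2c<1$. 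The entire theorem therefore hinges on pinning the leading constant $c$ to exactly $1$; a bound with an unspecified $L$-dependent constant either proves hardness for all $\lambda$ (impossible, since PCA works for $\lambda>1$) or places the threshold at the wrong value. Controlling $\|F\|_\infty$ and invoking Clebsch--Gordan decompositions will not by itself produce this sharp constant; the paper's Parseval identity is exactly the mechanism that does, and your route would need either that identity or a genuinely sharp leading-order diagram count, uniformly up to $d=o(n^{1/3})$, which the proposal does not supply.
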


\begin{remark}[Equivalent model: noisy indicators]\label{rmk:noisy indicators}%

A natural model for receiving information about $g_kg_j^{-1}$ is to receive a ``score'' (such as the log-likelihood, or a common-lines score in Cryo-EM) $z_{kj}(g)$ for each possible group element $g\in G$ measuring how likely it is that $g_kg_j^{-1} = g$.
This is a general synchronization model, and is the motivation behind the non-unique games (NUG) approach to estimation, where, if $z_{kj}$ is the log-likelihood, one would solve an optimization problem such as
\[
\max_{g_1,\dots,g_n} \sum_{kj}z_{kj}(g_kg_j^{-1}),
\]
corresponding to the maximum likelihood of estimating $g_1, \dots, g_n$ when the noise in pairwise measurements is independent.

We observe that this approach is connected to the multi-frequency model described above in terms of representations. Indeed, assume that $G$ is of size $L$ and let $g^\ast = (g_1^\ast, \dots, g_n^\ast)$ be the ground truth-signal. Consider
the score function as a noisy indicator of a form 
$$
z_{kj}(g) = \begin{cases}
    \gamma + w_{kj}(g) & \text{ if } g = g_k^\ast (g_j^\ast)^{-1}, \\
    w_{kj}(g) & \text{otherwise},
\end{cases}
$$
where $\gamma > 0$ and $w_{kj}(g)\sim\mathcal{N}(0,1)$. 

In this case, when $\gamma = \sqrt{2} \lambda \sqrt{L / n}$, the noisy indicator model is equivalent to the multi-frequency model~\eqref{eq:def_mfreq_groups_intro}. The basic idea of this transformation is to change basis to the ``matrix coefficients'' of the irreducible representations of $G$ invoking the Peter-Weyl theorem.
We give the detailed proof of the equivalence in \Cref{sec:noisy_indicator}.
Note that this model treats all group elements symmetrically and does not involve a notion of ``closeness'' of group elements such as closeness along the complex unit circle for $SO(2)$.
After the above change of basis, this corresponds to including \emph{all} non-equivalent irreducible representations of $G$ among the observations rather than just a subset as in~\eqref{eq:def_mfreq_groups_intro}.

\end{remark}

While the precise statistical threshold value remains undetermined, existing upper bounds (the demonstration by \cite{perryOptimalitySuboptimalityPCA2016} of an inefficient algorithm for testing in some cases when $\lambda < 1$) combined with the result of the present study establishes the presence of a statistical-to-computational gap in the low-degree sense in models with a sufficiently large number of frequencies.

\subsection{Related work}
The synchronization model with multiple frequencies was first formally introduced in the work \cite{perryOptimalitySuboptimalityPCA2016}. %
The authors explored statistical distinguishability for this model. As it is natural to expect, the statistical threshold is strictly less than the spectral one when the number of frequencies is sufficiently large. %
As a reminder, the main results of that work are summarized in \Cref{fig:phase_tran_finiteL}. 

\begin{figure}[t]
    \centering
\begin{tikzpicture}
    \def\startX{0}
    \def\endX{14}
    \def\tickZero{0.5}
    \def\tickOne{4}
    \def\tickStat{6}
    \def\tickTwo{7.4}
    \def\tickThree{12}
    
    \fill[red!25] (\startX, -0.1) rectangle (\tickOne, 1);
    \fill[red!10] (\tickOne, -0.1) rectangle (\tickStat, 1);
   \fill[yellow!10] (\tickStat, -0.1) rectangle (\tickTwo, 1);
    \fill[yellow!25] (\tickTwo, -0.1) rectangle (\tickThree, 1);
    \fill[green!20] (\tickThree, -0.1) rectangle (\endX, 1);
    
    \draw[->, thick] (\startX,0) -- (\endX,0) node[right]  {$\lambda$};

    \draw (\tickZero,0.1) -- (\tickZero,-0.1); 
    \node[align=center] at (\tickZero, -0.5) {$\frac{1}{\sqrt{L}}$};
    \draw (\tickOne,0.1) -- (\tickOne,-0.1); 
    \node[align=center] at (\tickOne, -0.5) {$\sqrt{\frac{2(L-1) \log(L-1)}{L(L-2)}}$};%
    \node[align=center] at (\tickOne, -1.2) { \small \cite{perryOptimalitySuboptimalityPCA2016}};
    \draw (\tickTwo,0.1) -- (\tickTwo,-0.1);
    \node[align=center] at (\tickTwo, -0.5) {$\sqrt{\frac{4\log L}{L-1}}$ };
    \node[align=center] at (\tickTwo, -1.2) { \small \cite{perryOptimalitySuboptimalityPCA2016}};
    \draw (\tickStat,0.1) -- (\tickStat,-0.1);
    \node at (\tickStat, -0.5) {$\lambda_{\text{stat}} = \text{?}$};
     \draw (\tickThree,0.1) -- (\tickThree,-0.1);
     \node[align=center] at (\tickThree, -0.5) {$\lambda_{\text{comp}} = 1$};%
     
    \node[align=center] at (\tickThree, -1.2) {\small \cite{Perry_AMP_synch} (AMP) \\ \small (this paper, low-degree)};

\pgfmathsetmacro\labelOne{(\startX + \tickOne)/2}
    \pgfmathsetmacro\labelTwo{(\tickOne + \tickTwo)/2}
    \pgfmathsetmacro\labelThree{(\tickTwo + \tickThree)/2}
    \pgfmathsetmacro\labelFour{(\tickThree + \endX)/2}

    \node at (\labelOne, 0.5) {impossible};
    \node at (\labelThree, 0.5) {possible but hard};
    \node at (\labelFour, 0.5) {easy};
\end{tikzpicture}
    \caption{A schematic illustration of phase transitions in the multi-frequency synchronization model over a finite group of order $L$ (for sufficiently large $L$). $1 / \sqrt{L}$ on the far left is included as it is the ``optimistic'' though incorrect computational threshold we would expect if independent observations of different frequencies behaved like independent observations of the same frequency. The \emph{``possible but hard''} regime corresponds to the statistical-to-computational gap in the low-degree sense. In this signal-to-noise regime, strong detection is information-theoretically possible; however, conjecturally, there are no efficient algorithms achieving it.}
    \label{fig:phase_tran_finiteL}
\end{figure}
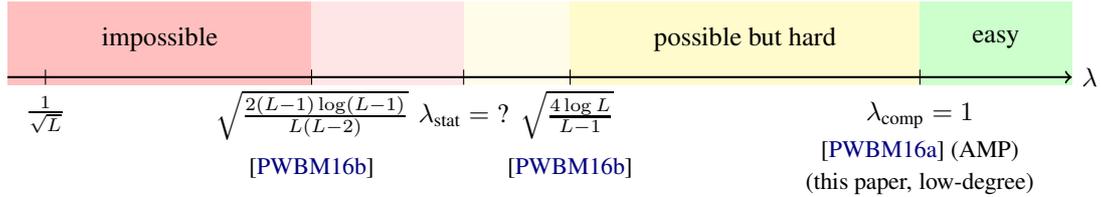

For the signal recovery, several approaches have been developed that improve accuracy compared to PCA by utilizing information from additional frequencies. One of the approaches is based on approximate message passing (AMP) algorithms adapted specifically for multiple frequencies, \cite{Perry_AMP_synch}. This method can be used for estimation and detection. The empirical results and non-rigorous derivations from statistical physics suggest that the computational threshold coincides with the spectral one. In this work, we confirm this prediction in the low-degree polynomials sense. 

The recent work \cite{Gao2019MultiFrequencyPS} proposes another efficient algorithm that leverages spectral information from each frequency channel. Compared to AMP it produces accurate estimates even under the spectral threshold. The discrepancy with our theoretic result is explained by the fact that we consider the setting of constant number of frequencies. Still, the empirical results of \cite{Gao2019MultiFrequencyPS} suggest that when number of frequencies diverges with the signal dimension, the computational threshold may supersede the spectral one. 

Finally, \cite{Wang2022MultiFrequencyJC} considers a multi-frequency synchronization model over a graph focusing on the joint estimation of the underlying community structure and estimation of the phases. The authors propose a spectral method based on the multi-frequency QR decomposition. In this work, we focus only on the setting when all pairwise observations are available, in other words, the observation graph is complete. 

\begin{remark}
As we were finalizing our manuscript, a related paper by Yang et al. became available, which analyzes phase transitions of the inference of the multi-frequency model \cite{yang2024asymptotic}. This study provides a rigorous computation of a replica formula for the asymptotic signal-observation mutual information that characterizes the information-theoretical limits of inference. 
Through the analysis of the replica formula, the authors also derive conjectured phase transitions for computationally-efficient algorithms.
    
    These results, like those of \cite{Perry_AMP_synch} are also predicated on the optimality of AMP-based methods. In our work, we establish the low-degree polynomial lower bounds for the multi-frequency model over finite groups and $SO(2)$. Our lower bounds for detection coincide with the lower bounds given in \cite{yang2024asymptotic}.
\end{remark}

\subsection*{Notation}
\addcontentsline{toc}{subsection}{Notation}

We use subscripts in the expectation, such as $\mathbb{E}_{x}$, to indicate the variables with respect to which the expectation is taken. We omit the subscript when it is clear from the context, or when we take the full expectation with respect to all present randomness. For a finite set $M$, $|M|$ denotes its cardinality. To describe the order of growth of functions, we use standard asymptotic notation like $o(\cdot), O(\cdot), \Theta(\cdot)$, $\Omega(\cdot)$, and so forth, which is always associated with the limit of the signal dimension $n\to\infty$.
The similar notations $\tilde{o}(\cdot), \tilde{O}(\cdot), \tilde{\Theta}(\cdot), \tilde{\Omega}(\cdot)$ refer to the same bounds up to polylogarithmic factors in $n$.
For a positive integer $K$, we write $[K] = \{1, \dots, K\}$.

\subsection*{Acknowledgements} 
\addcontentsline{toc}{subsection}{Acknowledgements}

Generative AI, spell-checking, search engines, and other similar tools were occasionally used by the authors to assist with the writing of this paper; the errors are all human.

\section{Average-case complexity from low-degree polynomials}

Understanding whether a statistical problem is computationally hard can be a very difficult task. %
Even for deterministic problems we cannot test against all possible algorithms, since there may exist algorithms we are not aware of. %
In classical computational complexity theory, the goal is to study the complexity of worst-case problems, i.e., instances of the problem with the most unfortunate input configuration, which prevents an algorithm from utilizing any favourable structure of the input. However, such configurations may occur very rarely in practice in statistical problems. 
Our focus therefore is in determining when \emph{typical} instances can be solved efficiently, which we refer to as \emph{average-case hardness}. By typical instances we refer to configurations occurring with high probability under the model's probabilistic assumptions.

Many classical statistical algorithms, including many settings of maximum-likelihood estimation, are NP-hard in the worst-case. %
Still, in the average case, there are often tractable procedures that can solve the problem exactly or with high precision with high probability (e.g., \cite{Abbe_z2, Abbe_recovery}). While these algorithms may produce a sub-optimal or completely wrong output on specific pathological instances, the probability of encountering those worst-case configurations is small; in other words, the worst case is not typical. %

However, analyzing computational hardness in the average case requires different tools than the worst case. 
There are several different existing approaches to tackling this task. First, we can adapt the classical complexity theory theory idea of reducing problems to other supposedly hard problems. This method has to be suitably adapted to the randomness involved in statistical problems, but has found success in many situations \cite{brennan2018reducibility, brennan2019optimal, brennan2020reducibility}. 
Another line of work instead studies the limitations of specific powerful classes of algorithms. A non-exhaustive list includes statistical query algorithms \cite{kearns1998efficient,feldman2017statistical}, approximate message passing (AMP) algorithms \cite{DonohoAMP}, local algorithms \cite{gamarnik2019landscape,ben2020algorithmic}, and the low-degree polynomial algorithms that we consider in this work. 

Originally, the low-degree polynomials framework arose in the study of Sum-of-Squares hierarchy of semidefinite programs \cite{Hopkins_lowdeg, hopkins2017power, Hopkins2017EfficientBE}. Subsequently, it has been developed into an independent method for studying computational hardness for detection and extended to different statistical tasks \cite{Rush_LowDegPlantedvsPlanted,Schramm_Lowdeg_recovery}. 
The idea of this ``low-degree method'' is to consider the special class of algorithms that can be expressed as polynomials of low degree. The framework provides a particular criterion that suggests if a problem, in our case a hypothesis testing problem, can be reliably solved using these low-degree polynomial algorithms. 

For a large array of classical problems, low-degree polynomials turn out to be as powerful as all known polynomial-time algorithms \cite{ding2023subexponential,Hopkins2017EfficientBE,Kunisky2019NotesLowDeg}, which makes them a powerful theoretical tool for studying computational tractability. Moreover, the failure of low-degree polynomials also implies failure of statistical query methods \cite{brennan2020reducibility}, spectral methods \cite{Kunisky2019NotesLowDeg}, and AMP methods under additional assumptions \cite{Montanari_AMPLowDeg}. By establishing connections to the free energy calculations common in statistical physics, it has also been shown that low-degree hardness implies the failure of local Monte-Carlo Markov chains algorithms \cite{bandeira2022franz}. %
It is conjectured that low-degree polynomials are as powerful as all polynomial-time algorithms for ``sufficiently nice'' distribution of problem instances for a wide range of problems. We refer to \cite{Hopkins_lowdeg} for a more formal statement of the conjecture and further details.

In this work, we will use this framework to analyze computational complexity of testing whether the signal is present in our observations under to the multi-frequency Gaussian synchronization model.
In the rest of this section, we describe the low-degree method more formally, starting first with the basic notions of statistical hypothesis testing. This short introduction follows the exposition given in the survey on the low-degree method of \cite{Kunisky2019NotesLowDeg}, to which we refer the reader for more details and additional examples. 

\subsection{Statistical distinguishability}
Let $\bb P_n$ and $\bb Q_n$ for $n \in \bb N$ be two sequences of probability distributions defined over a common sequence of measurable spaces $\mathcal S = ((\mathcal S_n, \mathcal F_n))_{n \in \bb N}$. Assume additionally that $\bb P_n$ is absolutely continuous with respect to $\bb Q_n$ for each $n$. We will refer to $\bb P_n$ as the \emph{planted distribution} and $\bb Q_n$ as the \emph{null distribution}. In our setting, the planted distribution often corresponds to the probability distribution of the signal perturbed by noise, while the null distribution describes the pure noise distribution. In statistical language, these are the null and alternative hypotheses, respectively. Finally, we think of the parameter $n$ measuring the size of the problem. 

Suppose we obtain a sample, which we refer to as an \emph{observation}, from either $\bb P_n$ or $\bb Q_n$. The goal is to determine which underlying distribution it was drawn from based on the information about the distributions and the observation itself.
Of course, unless the supports of the distributions are non-intersecting, it is impossible to differentiate the distributions for all possible samples, hence we are interested only in achieving the correct identification with high probability. This describes the notion of \emph{strong distinguishability}. 
\begin{definition}\label{def:contiguity}
    We say that $\bb P_n$ and $\bb Q_n$ are \emph{strongly distinguishable} if there exists any sequence of measurable functions $f_n: \mathcal S \to \{\rm p, \rm q\}$ achieving strong detection (in the sense of Definition~\ref{def:strong-detection}).
\end{definition}
\noindent
In words, we have strong distinguishability when we can find a test function $f_n$, such that in the limit of $n \to \infty$, the probability of the test function making a mistake decreases to zero.

This question is well-studied in asymptotic statistics, and, in the absence of limitations on computational resources, Le Cam's theory of contiguity provides powerful a tool for such analysis \cite{cam1960locally}.
A simple version of one of the main tools of this theory, sometimes called a \emph{second moment method}, is as follows.
\begin{lemma}\label{lm:lecam}
    Define the \emph{likelihood ratio} of $\bb P_n$ and $\bb Q_n$ as
    $$
    L_n(Y) \colonequals \frac{\rm d \bb P_n}{\rm d \bb Q_n }(Y).
    $$
    If $\|L_n\|^2 \colonequals \E_{Y \sim \bb Q_n} [L_n(Y)^2] $ remains bounded as $n \to \infty$, then there is no test strongly distinguishing $\bb P_n$ and~$\bb Q_n$. 
\end{lemma}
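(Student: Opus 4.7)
The plan is to argue by contradiction via a one-line Cauchy--Schwarz bound on the planted probability of any candidate acceptance region, using the second moment of the likelihood ratio as a proxy for how hard $\bb P_n$ is to distinguish from $\bb Q_n$.

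Suppose toward a contradiction that some sequence $f_n \colon \mathcal S_n \to \{\rm p, \rm q\}$ achieves strong detection in the sense of Definition~\ref{def:strong-detection}, and let $A_n \colonequals f_n^{-1}(\rm p) \in \mathcal F_n$. Strong detection then says $\bb P_n(A_n) = 1 - o(1)$ and $\bb Q_n(A_n) = o(1)$ as $n \to \infty$. The first step is to rewrite the planted mass of $A_n$ as an expectation under the null using the Radon--Nikodym derivative $L_n$, which is well defined by the absolute continuity hypothesis:
\[
\bb P_n(A_n) \;=\; \int_{A_n} L_n(Y)\, \mathrm d \bb Q_n(Y) \;=\; \E_{Y \sim \bb Q_n}\!\left[ L_n(Y)\, \mathbf 1_{A_n}(Y) \right].
\]

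The second step is Cauchy--Schwarz applied under $\bb Q_n$ to the two factors $L_n$ and $\mathbf 1_{A_n}$:
\[
\bb P_n(A_n) \;\le\; \sqrt{\E_{Y \sim \bb Q_n}[L_n(Y)^2]} \cdot \sqrt{\E_{Y \sim \bb Q_n}[\mathbf 1_{A_n}(Y)^2]} \;=\; \|L_n\| \cdot \sqrt{\bb Q_n(A_n)}.
\]
By hypothesis, $\|L_n\|^2$ is bounded in $n$, while $\bb Q_n(A_n) = o(1)$, so the right-hand side tends to zero. This contradicts $\bb P_n(A_n) = 1 - o(1)$, completing the proof.

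There is really no hard step here: the whole content is the single application of Cauchy--Schwarz after switching to the null measure via $L_n$. The only point that needs a bit of care is noting that the absolute continuity assumption $\bb P_n \ll \bb Q_n$ is exactly what legitimizes the change of measure and ensures $L_n$ exists as a genuine density; without it, the integral identity above would be false in general. Everything else is bookkeeping.
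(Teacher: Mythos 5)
Your proof is correct; this is the standard second-moment (Cauchy--Schwarz) argument, and the change of measure followed by $\bb P_n(A_n) \le \|L_n\|\sqrt{\bb Q_n(A_n)}$ is exactly the intended content of the lemma. The paper itself states this as a known result from Le Cam's theory without proof (deferring to the cited survey), and your argument matches the standard one given there.
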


\subsection{Low-degree likelihood ratio (LDLR)}\label{sec:low-degree-intro} 

We next present the version of this tool that pertains to the possibility distinguishing the two distributions using only $f_n$ that are low-degree polynomials, which, per the assumptions of the low-degree framework, is a proxy for the possibility of distinguishing the distributions using \emph{any} polynomial-time algorithm.

This \emph{computational distinguishability} is governed by a low-degree polynomial analog of the likelihood ratio; in fact, the correct analog is simply the projection of the likelihood ratio to low-degree polynomials. 

\begin{definition} Let $\bb P_n$, $\bb Q_n$ be as before, and let $L_n$ denote likelihood ratio of $\bb P_n$ and $\bb Q_n$.
Define the \emph{degree-$D$ likelihood ratio} $L_n^{\le D}$ as the projection of likelihood ratio $L_n$ to the linear subspace of polynomials $\mathcal S_n \to \R$ of degree at most $D$, i.e., 
    $$
    L_n^{\le D} \colonequals \mathcal{P}^{\le D} L_n,
    $$
    where $\mathcal{P}^{\le D}$ is an orthogonal projection operator to the subspace of polynomials described above with respect to the inner product $\langle p, q \rangle = \bb E_{Y \sim \bb Q_n} p(Y) q(Y)$.
\end{definition}

\noindent
Polynomials of degree roughly $\log n$ are conjectured in the low-degree framework to be a proxy for polynomial-time algorithms. The following is a version of the main conjecture of this framework that encodes this hypothesis \cite{Hopkins_lowdeg,Hopkins2017EfficientBE,hopkins2017power}.

\begin{conjecture}\label{conj:low-degree}
    For ``sufficiently natural'' sequences of distributions $\bb P_n$, $\bb Q_n$, if $\|L_n^{\le D}\|^2 = \E_{Y \sim \bb Q_n}   [L_n(Y)^2] $ for $D = D(n) \ge (\log n)^{1 + \eps}$ remains bounded as $n \to \infty$, then there is no polynomial-time algorithm that achieves strong detection between $\bb P_n$ and $\bb Q_n$.
\end{conjecture}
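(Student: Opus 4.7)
The statement is a meta-conjecture rather than a precisely formulated mathematical claim, because the phrase ``sufficiently natural'' sequences of distributions is not given any formal definition. A complete proof in full generality is therefore unavailable, and indeed is arguably impossible without first pinning down that notion. My plan is instead to outline how one would establish the conjecture as a rigorous theorem within a restricted but useful family of models, and to identify where the essential difficulty lies.

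First, I would restrict attention to a concrete class of testing problems, say additive Gaussian spiked models in which $\bb P_n$ corresponds to $Y = \lambda X + W$ with $X$ drawn from a prior of bounded moments and $W$ a Gaussian tensor, while $\bb Q_n$ is the pure noise distribution. Within such a class, the target is a two-sided correspondence: (a) whenever a polynomial-time algorithm strongly distinguishes $\bb P_n$ from $\bb Q_n$, so does some polynomial of degree $D \geq (\log n)^{1 + \eps}$; and conversely (b) whenever $\|L_n^{\le D}\|^2$ stays bounded, every algorithm in the standard catalogue of polynomial-time procedures fails. The statement of the conjecture is precisely the contrapositive of direction~(a) in this dichotomy.

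Direction~(a) is the main obstacle, and the core of any serious attempt would have to live here. One plan is reduction by approximation: express the output of a generic polynomial-time algorithm as a polynomial via Taylor expansion or Boolean-circuit simulation, controlling the degree through Nisan--Szegedy type approximation theorems combined with bounded moment assumptions on the input. An alternative route, following the pseudocalibration program of Hopkins and collaborators, is to show that constant-degree sum-of-squares relaxations already admit low-degree witnesses, so that low-degree lower bounds transfer to SoS and thereby to a broad class of convex relaxations. Direction~(b) is softer and would proceed by invoking the already-established web of implications: low-degree hardness implies statistical query hardness (Brennan et al.), spectral method hardness (Kunisky et al.), and, under mild regularity, AMP hardness (Montanari--Wein) as well as slow mixing of Glauber-type chains (Bandeira et al.).

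The fundamental obstruction is exactly the formalization of ``sufficiently natural.'' The conjecture is known to fail for pathological distributions with exploitable algebraic structure, for instance solving noisy linear equations over $\bb F_2$, where Gaussian elimination succeeds but is not captured by bounded-degree polynomials in the natural encoding. Any rigorous theorem must therefore rule out such cases via structural hypotheses such as invariance under a large group action, Gaussianity or product structure of the null, dense priors, and absence of integer-valued constraints on the planted signal. Until such a formalization is achieved, the conjecture must remain a heuristic principle whose strongest justification is the long list of problems, including the synchronization models studied in this paper, in which both directions have been independently verified.
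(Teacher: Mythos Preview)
Your assessment is correct: the statement is labeled a \emph{Conjecture} in the paper and is not accompanied by any proof. The paper treats it exactly as you describe --- as a heuristic principle (the ``Low-Degree Conjecture'' of Hopkins et al.) that is invoked as a standing hypothesis, not as something to be established. Your discussion of why a full proof is unavailable, the known counterexamples such as Gaussian elimination over $\mathbb{F}_2$, and the need to formalize ``sufficiently natural'' are all appropriate and align with how the literature views this conjecture. There is nothing further to compare against, since the paper offers no argument for the statement itself.
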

\noindent
More generally, for larger $D$, degree-$D$ polynomials are believed to be as powerful as all algorithms of runtime $n^{\tilde{O}(D)}$, which corresponds to the time complexity of evaluating naively a degree-$D$ polynomial term by term. In particular, a stronger version of Conjecture~\ref{conj:low-degree} states that degree-$D$ polynomials continuously capture computational hardness for subexponential-time algorithms: if $\|L_n^{\leq D}\|^2$ is bounded as above, then strong detection requires runtime at least $\exp(\tilde{\Omega}(D))$.

\section{Synchronization over the circle group: main results}

We start the exposition with the angular synchronization model with multiple frequencies. 
The angular synchronization, also known as phase synchronization, concerns recovery of phases $\varphi_1, \dots, \varphi_n$ from potentially noisy relative observations $\varphi_k - \varphi_j \mod 2\pi$ \cite{singer2011angular,Bandeira_angularsynch}. The model can also be seen as synchronization over $SO(2)$. For this model we assume the uniform prior, i.e., each element $x_j$ of the signal $x \in \C^n$  follows $x_j \sim \Unif(U(1))$. Equivalently, we can define the prior as sampling the phase uniformly from $[0, 2\pi)$: $\varphi_j \sim \Unif([0, 2\pi))$ and setting $x_j = e^{i \varphi_j}$. The coordinates of the signal are sampled independently. This prior corresponds to the Haar measure in $SO(2)$.

Before defining the angular synchronization model, we first need to define a Gaussian ensemble to use as the noise model.
\begin{definition}\label{def:goe}
The \emph{Gaussian orthogonal} or \emph{unitary ensembles} (\emph{GOE} or \emph{GUE}, respectively) are the laws of the following Hermitian random matrices $W$.
We have $ W \in \R^{n\times n}$ or $ W\in\C^{n\times n}$, respectively, and its entries are independent random variables except for being Hermitian, $W_{ij} = \overline{W_{ji}}$. The off-diagonal entries are real or complex standard Gaussian random variables,\footnote{A complex standard Gaussian has the law of $x + iy$ where $x, y \sim \mathcal{N}(0, \frac{1}{2})$ are independent.} and the diagonal entries follow real Gaussian distribution with $W_{ii} \sim \mathcal{N}(0, 2)$ or $W_{ii} \sim \mathcal{N}(0, 1)$, respectively.
We write $\GOE(n)$ and $\GUE(n)$ for the respective laws.
\end{definition}

\begin{definition}[Multi-frequency $\bb S$-synchronization]\label{def:angular_synch_U1}
Let $x \in \C^n$ have every coordinate sampled independently from uniform distribution over $U(1)$. %
Fix a number of frequencies $L$ and let $\lambda_\ell \ge 0$ be the signal-to-noise ratio at frequency $\ell \in [L]$. We observe $L$ matrix observations as follows:
    \begin{equation}\label{eq:gaussian_synch_unitary}
        Y_\ell = \frac{\lambda_\ell}{n} x^{(\ell)} x^{(\ell)*} + \frac{1}{\sqrt{n}} W_\ell, \quad \text{for $\ell = 1, \ldots L$},
    \end{equation}
    where $W_{\ell} \sim \GUE(n)$ are drawn independently. By $x^{(\ell)}$ we denote entrywise $\ell$-th power
    $$
    x^{(\ell)} = (x_1^\ell, x_2^\ell, \dots, x_n^\ell)^{\top}.
    $$
    We then denote the distribution of $(Y_1, \dots, Y_L)$ by $\GSynch(\bb S, L, \lambda)$.
\end{definition}
\noindent
We will compare the planted distribution $\bb P_n \colonequals \GSynch(\bb S, L, \lambda)$ for $\lambda = (\lambda_1, \dots, \lambda_L)$ with the corresponding pure noise model $\bb Q_n \colonequals \GSynch(\bb S, L, (0, \dots, 0))$.

In \cite{Perry_AMP_synch}, it was predicted that the detection in the synchronization model with multiple frequencies is not efficient below the spectral threshold $\lambda \le 1$ (assuming all $\lambda_\ell$ to be the same). We confirm this prediction in sense of the low-degree hardness. 
\begin{theorem}[Formalized version of \Cref{thm:angular_comp_tr_intro}]
    Consider the Gaussian angular synchronization model with $L$ frequencies where $L$ does not depend on $n$. 
    Denote by $\lambda_{\text{max}}$ the maximum signal-to-noise ratio value among all the frequencies, 
    $$
    \lambda_{\text{max}} = \max_{\ell=1, \dots, L} \lambda_\ell.
    $$
If $\lambda_{\text{max}} \le 1$ and $D = D(n) = o(n^{1/3})$,
it holds $\| L^{\le D}_n\| = O(1)$ as $n \to \infty$.
\end{theorem}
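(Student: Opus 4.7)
The plan is to bound the squared low-degree likelihood ratio norm by invoking the standard formula for additive Gaussian channel models and then carefully estimating the moments of the resulting overlap statistic.

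Since each observation $Y_\ell$ is a GUE matrix shifted by a rank-one mean, the truncated likelihood ratio admits the closed form
\[
\|L_n^{\le D}\|^{2} \;=\; \mathbb{E}_{x, x'}\!\left[\sum_{d=0}^{D} \frac{1}{d!}\,\alpha(x, x')^{d}\right], \qquad \alpha(x, x') \;=\; \sum_{\ell=1}^{L} \lambda_\ell^{2}\,\frac{|\langle x^{(\ell)}, x'^{(\ell)}\rangle|^{2}}{n},
\]
where $x, x'$ are independent draws from the product uniform prior on $U(1)^{n}$. Deriving this is a routine Hermite-type expansion once the covariance of $\GUE(n)/\sqrt{n}$ is pinned down from Definition~\ref{def:goe}, so the actual task is to control $\mathbb{E}[\alpha(x, x')^{d}]$.

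The key observation is that, setting $z_j := x_j \overline{x'_j}$, the $z_j$ are i.i.d.\ $\Unif(U(1))$ and the overlaps become character sums $s_\ell := \langle x^{(\ell)}, x'^{(\ell)}\rangle = \sum_{j=1}^{n} z_j^{\ell}$. Multinomially expanding $\alpha(x,x')^{d}$ reduces the problem to bounding $\mathbb{E}\bigl[\prod_\ell |s_\ell|^{2 d_\ell}\bigr]$ for integer tuples with $\sum_\ell d_\ell = d$. Expanding each $|s_\ell|^{2 d_\ell}$ over index tuples in $[n]^{d_\ell} \times [n]^{d_\ell}$ and using $\mathbb{E}_{z \sim \Unif(U(1))}[z^{k}] = 1$ if $k = 0$ and $0$ otherwise, the expectation factorises across the $n$ independent coordinates into a weighted count of index assignments satisfying a signed cancellation condition at each coordinate. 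The dominant contribution comes from ``diagonal'' assignments, in which, within each frequency $\ell$ separately, the primed and unprimed $d_\ell$-tuples are matched by a permutation on distinct indices; these contribute $\prod_\ell d_\ell!\,n^{d_\ell}$, and when combined with the multinomial coefficient $\binom{d}{d_1, \dots, d_L}$ and the prefactor $\prod_\ell \lambda_\ell^{2 d_\ell}$, the factorials telescope to give
\[
\frac{1}{d!}\,\mathbb{E}\!\left[\alpha(x, x')^{d}\right] \;\le\; h_d\!\left(\lambda_1^{2}, \dots, \lambda_L^{2}\right)\,\exp\!\big(O(d^{3}/n)\big),
\]
where $h_d$ denotes the complete homogeneous symmetric polynomial and the exponential factor absorbs the $(1 + O(d^{2}/n))$ corrections from coordinates visited more than twice. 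Since $\lambda_\max \le 1$ and $L$ is fixed, $h_d \le \binom{d+L-1}{L-1}$ grows only polynomially in $d$, and $D^{3} = o(n)$ is precisely the condition needed for the sum over $d \le D$ to stay bounded.

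The main obstacle will be controlling the off-diagonal contributions to this combinatorial count, namely configurations in which a single coordinate $j \in [n]$ is occupied by indices from two or more distinct frequencies. The signed cancellation condition then couples across frequencies through the weighted sum $\sum \pm \ell$, and a naive bound would pay a factor of $L^{d}$ that is catastrophic for the geometric series. The mechanism to exploit is the orthogonality of the distinct characters $z \mapsto z^{\ell}$ of $U(1)$ --- the analogue in this setting of the Peter--Weyl decomposition that drives the finite-group version of the argument --- which forces off-diagonal pairings to arise only through higher-order collisions suppressed by additional powers of $1/n$. Turning this representation-theoretic input into a rigorous combinatorial bound, uniform over all $\lambda_\max \le 1$ and tight enough to give the precise $n^{1/3}$ threshold in $D$, is the principal technical work.
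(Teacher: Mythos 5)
Your setup is correct and matches the paper's: the closed form for $\|L_n^{\le D}\|^2$ is the Gaussian-additive-model formula (the paper's Lemmas~\ref{lm:ldlr_gaussian_additive_model} and~\ref{lm:ldlr_angular_cyclic_base_expression}, after using rotation invariance to replace $\langle x^{(\ell)},x'^{(\ell)}\rangle$ by $\langle x^{(\ell)},\bm 1_n\rangle$), and your reduction of $\E[\alpha^d]$ to a count of index assignments satisfying a signed cancellation condition is exactly the set $M_d^{\bb S}$ of Lemma~\ref{lm:expect_as_cardinality}. The diagonal contribution you compute, $\prod_\ell d_\ell!\,n^{d_\ell}$ telescoping against $\binom{d}{d_1,\dots,d_L}/d!$ to give a complete homogeneous symmetric polynomial in the $\lambda_\ell^2$, is also the correct leading term. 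However, the proposal stops precisely where the theorem's content begins: the bound $\exp(O(d^3/n))$ on the off-diagonal (cross-frequency) configurations is asserted, not proved, and you explicitly defer it as ``the principal technical work.'' Since the entire point of the theorem is that these coupled configurations do not destroy boundedness up to $D = o(n^{1/3})$ --- and since a naive count of them really does blow up, as you note --- this is a genuine gap rather than a routine omission. As written, the argument establishes nothing beyond what is already immediate from the single-frequency case.

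For comparison, the paper resolves this difficulty by a different route that avoids enumerating cross-frequency pairings directly. It first observes $M_d^{\bb S}\subseteq M_d^{\bb Z_L}$, so it suffices to bound the $\bb Z_L$ model (Lemma~\ref{lm:reduction_angular_cyclic}); there the moment becomes a moment of a Pearson $\chi^2$-type statistic in the multinomial counts $n_0,\dots,n_{L-1}$ (Lemma~\ref{lm:ldlr_counts_ng_cyclic}). The quadratic form $\frac{L}{2}I - \frac12 \bm 1\bm 1^\top$ is then diagonalized into a sum of squares of centered linear statistics (Lemma~\ref{lm:S_l_finite_group}), and the expectation is computed by recursively conditioning on one count at a time, using the moment bound of Lemma~\ref{prop:clt_moments} for sums of i.i.d.\ bounded variables (Lemmas~\ref{lm:expectation_T_step_k_main} and~\ref{lm:T_recurrent}). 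The factor $(d^3/n)^{k/2}\le 1$ that emerges in that recursion is exactly where the $D=o(n^{1/3})$ threshold comes from, yielding $\|L_n^{\le D}\|^2 \le \sum_d \lambda^{2d} d^{2L}$, which is summable via the polylogarithm. If you want to complete your direct combinatorial route instead, you would need a rigorous classification of the constraint graphs in $M_d^{\bb S}$ showing that every coordinate hosting indices from two or more frequencies costs at least a factor $d^{O(1)}/\sqrt{n}$ relative to the diagonal count --- that is a nontrivial argument, not a corollary of character orthogonality alone.
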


\begin{remark}
 In particular, assuming Conjecture~\ref{conj:low-degree}, this theorem suggests the failure of all polynomial-time algorithms. 
\end{remark}
\noindent
We highlight that the theorem implies a sharp computational phase transition. Indeed, once $\lambda_{\text{max}} \ge 1$, or equivalently, there is at least one frequency $\ell$ such that $\lambda_\ell \ge 1$, one can use PCA on the corresponding observation to detect the presence of the signal. 

We note also that for our low-degree calculation we can make the simplifying assumption that all of the  $\lambda_\ell$ are the same. If this is not the case, we can set all $\lambda_{\ell} \colonequals \lambda_{\text{max}}$. It can be easily verified that the second moment of the LDLR only increases in this case. Thus, low-degree hardness of the modified model implies the low-degree hardness of the original model. This assumption simplifies the analysis substantially.

The main idea of the proof is to show a connection to another multi-frequency angular synchronization model, this time with a different prior distribution on $x$. We will define the model with $L$-cyclic prior, in which every coordinate is sampled from the roots of unity as opposed to the complex unit circle. This variant can be seen as synchronization over $L$-cyclic group of integers modulo $L$.

\begin{definition}[Multi-frequency $\bb Z_L$-synchronization]\label{def:angular_synch_ZL}
    Fix $L \ge 2$ and let $x \in \C^n$ be such that every coordinate is sampled independently as $x_j \sim \Unif(\{\omega_0, \dots, \omega_{L-1}\})$ where $\omega_k = \exp(2\pi i k / L)$. Let $\lambda \ge 0$. We consider $\ceil{L/2} - 1$ matrix observations:
    \begin{equation*}%
        Y_\ell = \frac{\lambda}{n} x^{(\ell)} x^{(\ell)*} + \frac{1}{\sqrt{n}} W_\ell, \quad \text{for $\ell = 1, \ldots \ceil{L/2} - 1$},
    \end{equation*}
    where are drawn independently.
    For all $\ell \ne L/2$ we take $W_{\ell} \sim \GUE(n)$, and if $L$ is even, then we take $W_{L/2} \sim \GOE(n)$.
    We then denote the distribution of $(Y_1, \dots, Y_L)$ by $\GSynch(\bb Z_L, \lambda)$.
\end{definition}

    \begin{remark}
        The number of frequencies in the above definition is chosen to avoid redundancy in the set of observations: we exclude the trivial frequency (the frequency corresponding to $L$-th power), which would not depend on $x$, and take only one frequency per conjugate pair. 
    \end{remark}  %

We will proceed in two steps to show the computational hardness of these two models.
First, %
as mentioned above, we will show in \Cref{sec:ang_cyclic_reduction} that detection by low-degree polynomials in the angular synchronization model with $L$ frequencies is at least as hard as detection in its $L$-cyclic counterpart. 
The basis for this reduction is \Cref{lm:reduction_angular_cyclic}, which shows that in the latter case, the second moment of the low-degree likelihood ratio is larger. The proof of hardness for $\GSynch(\bb S, L)$ is a special case of a more general result for synchronization models over arbitrary finite groups which is described in \Cref{sec:finite_groups}. However, for the purposes of illustration, we present the beginning of the argument in \Cref{sec:l-cyclic} in this special case as it does not require any knowledge of representation theory. The final part of the proof is a combinatorial analysis that is the same as in the general case, and can be found in \Cref{sec:combinatorics_proof_finite}. We provide a short sketch of this part of the proof in \Cref{sec:sketch_combinatorial_proof}.

\section{Synchronization over the circle group: proofs}

\subsection{Bounding $\GSynch(\bb S, L, \lambda)$ by $\GSynch(\bb Z_L, \lambda)$}\label{sec:ang_cyclic_reduction}

Our first goal will be to show the following comparison result.
\begin{lemma} \label{lm:reduction_angular_cyclic}
    Denote by $L_{n, \bb S}^{\le D}$ the low-degree likelihood ratio for detection in the model with the uniform angular prior, $\GSynch(\bb S, L, \lambda)$, and $L_{n, \bb Z_L}^{\le D}$ for $L$-cyclic prior, $\GSynch(\bb Z_L, \lambda)$. For any $L, D, n \ge 1$, we have
    $$
    \| L_{n, \bb S}^{\le D}\|^2 \le \| L_{n, \bb Z_L}^{\le D}\|^2.
    $$
\end{lemma}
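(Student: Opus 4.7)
The plan is to write both squared LDLRs as expectations of polynomials in the ``overlap variables'' $z_j = x_j \bar{x}'_j$ (respectively $w_j = y_j \bar{y}'_j$ in the cyclic case), where $(x, x')$ and $(y, y')$ are independent copies of the signal, and then reduce the inequality to a term-by-term comparison between marginal moments on $U(1)$ and on the $L$-th roots of unity.

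First, since both models are Gaussian conditionally on the signal, the standard Hermite-polynomial expansion of the squared LDLR for a Gaussian channel gives a closed-form expression of the type
$$
\|L_n^{\le D}\|^2 \;=\; \E_{x, x'} \sum_{d=0}^D \frac{1}{d!}\left(\tfrac{\lambda^2}{n}\, T(z)\right)^d,
$$
where $T$ is a specific polynomial in the overlap variables determined by the observations and the covariance of the noise. For the $\bb S$-model, the combinatorics of GUE noise yields $T_{\bb S}(z) = \sum_{\ell=1}^L \bigl|\sum_j z_j^\ell\bigr|^2$, while for the $\bb Z_L$-model the analogous $T_{\bb Z_L}$ ranges over non-redundant irreducible frequencies, with the self-conjugate frequency $\ell = L/2$ (when $L$ is even) carrying the extra factor of $1/2$ typical of GOE noise. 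I would derive both formulas by a direct moment-generating-function computation on the appropriate Gaussian ensemble.

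Second, the identity $|\sum_j z_j^\ell|^2 = \sum_{j, k} z_j^\ell \bar z_k^\ell$ shows that $T_{\bb S}$ has non-negative coefficients in the monomial basis $\prod_j z_j^{a_j}\bar z_j^{b_j}$; the same holds after taking positive-integer powers, non-negative combinations, and the truncated exponential. The central observation is then the dichotomy of marginal moments: $\E_{z \sim \Unif(U(1))}[z^a \bar z^b] = \mathbf{1}[a = b]$, whereas $\E_{w \sim \Unif(\{L\text{-th roots}\})}[w^a \bar w^b] = \mathbf{1}[a \equiv b \!\!\pmod L]$, and the first condition implies the second. Consequently, for any polynomial with non-negative coefficients, its expectation under the $L$-th-roots product measure is at least its expectation under the $U(1)$ product measure, so the squared $\bb S$-LDLR is bounded above by the same polynomial evaluated against the discrete prior.

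Third, I would convert this intermediate bound into the advertised one by matching the polynomial against the $\bb Z_L$-model's $T_{\bb Z_L}$. Using $w_j^L = 1$ together with the conjugate symmetry $|\sum_j w_j^{L-\ell}|^2 = |\sum_j w_j^\ell|^2$, the sum over $\bb S$-frequencies folds into a sum over $\bb Z_L$-frequencies, and the doubled contribution from non-self-conjugate conjugate pairs combined with the single contribution from the self-conjugate frequency must be made to line up with the GUE-plus-GOE structure of $T_{\bb Z_L}$. The main obstacle I expect is exactly this bookkeeping: the trivial frequency $\ell = L$ becomes the constant $n^2$ on $L$-th roots, conjugate pairs are counted twice, and the GOE/GUE factor at $\ell = L/2$ has to be absorbed correctly, so the relationship between the $\bb S$-polynomial evaluated on the discrete prior and $T_{\bb Z_L}$ is not purely combinatorial and has to be reconciled carefully across the truncated exponential series. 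The first two steps, by contrast, are structural applications of the Hermite expansion for Gaussian LDLRs and of Haar-type marginalization on compact abelian groups.
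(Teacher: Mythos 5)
Your first two steps are correct and are essentially the paper's own argument in different notation: the paper likewise starts from the Gaussian-additive-model expansion $\|L^{\le D}\|^2=\sum_{d\le D}\frac{1}{d!}\E\big(\tfrac{\lambda^2}{n}T\big)^d$, passes to overlaps (replacing $\langle x^{(\ell)},(x')^{(\ell)}\rangle$ by $\langle x^{(\ell)},\bm 1_n\rangle$ via rotation invariance), and encodes your moment dichotomy $\E[z^a\bar z^b]=\mathbf 1[a=b]$ versus $\mathbf 1[a\equiv b \bmod L]$ as a containment of the sets of surviving index tuples ($M_d^{\bb S}\subseteq M_d^{\bb Z_L}$). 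So far, so good.

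The genuine gap is your third step, and your instinct that it is "the main obstacle" is exactly right: the bookkeeping does not close, and cannot. Restricting $T_{\bb S}(w)=\sum_{\ell=1}^L|\sum_j w_j^\ell|^2$ to the $L$-th roots of unity yields $n^2+2T_{\bb Z_L}$, where $T_{\bb Z_L}=\tfrac12\sum_{\ell=1}^{L-1}|\sum_j w_j^\ell|^2$ is the quantity actually appearing in $\|L_{n,\bb Z_L}^{\le D}\|^2$ (weight $1$ per conjugate pair, weight $\tfrac12$ at the self-conjugate frequency): the trivial frequency $\ell=L$ contributes the constant $n^2$ and every conjugate pair is counted twice. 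Hence your intermediate bound is $\|L_{n,\bb S}^{\le D}\|^2\le\sum_{d\le D}\frac{1}{d!}\E\big(\tfrac{\lambda^2}{n}(n^2+2T_{\bb Z_L})\big)^d$, whose $n^{2d}$ term alone contributes $\sum_d(\lambda^2 n)^d/d!\approx e^{\lambda^2 n}$ — a vacuous bound, and certainly not one dominated by $\|L_{n,\bb Z_L}^{\le D}\|^2$. In fact the inequality as stated already fails at $D=1$: $\|L_{n,\bb S}^{\le 1}\|^2=1+L\lambda^2$ while $\|L_{n,\bb Z_L}^{\le 1}\|^2=1+\tfrac{L-1}{2}\lambda^2$, because the $\bb S$-model simply has more observations. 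You should be aware that the paper's own proof silently makes the same faulty identification (it equates $\sum_d\frac{\lambda^{2d}}{n^d d!}|M_d^{\bb Z_L}|$, which is the truncated exponential of $\tfrac{\lambda^2}{n}(n^2+2T_{\bb Z_L})$, with $\|L_{n,\bb Z_L}^{\le D}\|^2$), so this is a defect in the lemma itself, not only in your write-up. The clean repair is to compare against $\GSynch(\bb Z_{L'},\lambda)$ for an odd $L'\ge 2L+1$: then $1,\dots,L$ are exactly the $\floor{L'/2}$ non-redundant nonzero residues mod $L'$, with no trivial frequency, no self-conjugate frequency, and no doubling, so your step 2 gives $\|L_{n,\bb S}^{\le D}\|^2\le\|L_{n,\bb Z_{L'}}^{\le D}\|^2$ with the weights matching exactly; the downstream combinatorial bound holds for any constant modulus, so the main theorem survives.
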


To prove the lemma we will need several intermediate steps. We start with deriving an expression for the second moment of the low-degree likelihood ratio. Despite observing multiple observation matrices, both models are Gaussian additive models, as all observations can be stacked as a single vector containing the concatenated signals with Gaussian noise added. We will use this fact to calculate closed-form expressions for $\| L_{n, \bb S}^{\le D}\|^2$ and $\| L_{n, \bb Z_L}^{\le D}\|^2$ by utilizing previous results for the LDLR for general Gaussian additive models. 

\begin{lemma}[LDLR of the Gaussian additive model]
\label{lm:ldlr_gaussian_additive_model}
Let $\theta$ be a $N$-dimensional vector defined either over $\R$ or $\C$ to be drawn from some prior distribution $\mathcal P_N$. Let $Z$ be a random vector of dimension $N$ with independent Gaussian entries of the respective type (as in \Cref{def:gaussian_type}). We define $\bb P_N$ and $\bb Q_N$ as follows.
\begin{itemize}
    \item Under $\bb P_N$, observe $Y = \theta + Z$ (planted distribution).
    \item Under $\bb Q_N$, observe $Y = Z$ (null distribution).
\end{itemize}
    Set $\beta = 1$ for $\R$ and $\beta = 2$ for $\C$. Then the norm of the low-degree likelihood ratio between $\bb P_N$ and $\bb Q_N$ is 
    \begin{equation}\label{eq:ldlr_GAM}
        \|L_n^{\le D}(Y)\|^2 = \E_{\theta, \theta^\prime \sim P_N} \lt [\sum_{d=0}^D \frac{1}{d!}  \bigg( \beta \Re(\langle \theta, \theta^\prime \rangle)\bigg)^d \rt],
    \end{equation}
    where $\theta$ and $\theta^\prime$ are drawn independently from $\mathcal{P}_N$.
\end{lemma}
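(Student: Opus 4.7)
This is a standard Hermite-expansion computation for the low-degree likelihood ratio in Gaussian additive channels. The plan is to write $L_N(Y)$ in closed form from the density ratio, expand it in the Hermite orthonormal basis of $L^2(\bb Q_N)$, identify the degree-$D$ projection as the truncation of the expansion to multi-indices of total degree at most $D$, apply Parseval to obtain the squared norm as a sum over multi-indices, and finally regroup that sum by a multinomial identity into the $\langle \theta, \theta'\rangle^d/d!$ form. The distinction between $\beta=1$ and $\beta=2$ will come entirely from the variance convention for the complex Gaussian.

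For the real case, the Gaussian density ratio gives $L_N(Y)=\E_{\theta}\exp(\langle\theta,Y\rangle-\tfrac12\|\theta\|^2)$, and the scalar Hermite generating function $\exp(yt-t^2/2)=\sum_{k}H_k(y)\,t^k/k!$ applied coordinate-wise yields
\[
L_N(Y)=\sum_{\alpha}\frac{\E_{\theta}[\theta^{\alpha}]}{\alpha!}\,H_{\alpha}(Y),\qquad H_{\alpha}(Y)\colonequals\prod_i H_{\alpha_i}(Y_i),
\]
with $\E_{Y\sim\bb Q_N}[H_{\alpha}(Y)H_{\beta}(Y)]=\alpha!\,\delta_{\alpha\beta}$. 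Truncating to $|\alpha|\le D$ and applying Parseval produces $\|L_N^{\le D}\|^2=\sum_{|\alpha|\le D}\E_{\theta}[\theta^{\alpha}]\,\E_{\theta'}[(\theta')^{\alpha}]/\alpha!$, and the multinomial identity $\langle\theta,\theta'\rangle^d=\sum_{|\alpha|=d}\tfrac{d!}{\alpha!}\theta^{\alpha}(\theta')^{\alpha}$ regroups this by total degree into $\E_{\theta,\theta'}\bigl[\sum_{d=0}^D\langle\theta,\theta'\rangle^d/d!\bigr]$, matching the stated formula with $\beta=1$.

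For the complex case I reduce to the real one via the identification $\C^N\cong\R^{2N}$. Writing $\theta=\theta_R+i\theta_I$, $Y=Y_R+iY_I$, $Z=Z_R+iZ_I$ with $Z_R,Z_I\sim\mathcal N(0,\tfrac12 I_N)$ independent, I rescale by $\sqrt{2}$ so that the noise becomes standard real Gaussian in $\R^{2N}$ and the effective signal is $(\sqrt 2\,\theta_R,\sqrt 2\,\theta_I)$. Invoking the real-case result in $\R^{2N}$ then yields
\[
\|L_N^{\le D}\|^2=\E_{\theta,\theta'}\!\left[\sum_{d=0}^{D}\frac{1}{d!}\bigl(2\langle\theta_R,\theta'_R\rangle+2\langle\theta_I,\theta'_I\rangle\bigr)^d\right],
\]
which is exactly the claimed formula with $\beta=2$, since $\Re\langle\theta,\theta'\rangle=\langle\theta_R,\theta'_R\rangle+\langle\theta_I,\theta'_I\rangle$ for the Hermitian inner product on $\C^N$.

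The only subtlety is the variance bookkeeping that produces the factor $\beta=2$ in the complex case; once the $\sqrt{2}$ rescaling is correctly applied, the complex case follows verbatim from the real one, so I do not anticipate any real obstacle beyond being careful with conventions.
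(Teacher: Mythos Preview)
Your proposal is correct and follows essentially the same approach as the paper. The paper simply cites the real case as known from \cite[Theorem~2.6]{Kunisky2019NotesLowDeg} rather than reproving it via the Hermite expansion you outline, and then performs exactly your $\C^N\cong\R^{2N}$ reduction with the $\sqrt{2}$ rescaling to handle the complex case; your additional self-contained derivation of the real case is a welcome elaboration but not a genuinely different route.
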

\begin{proof} 
For the real case the expression coincides with the one given in \cite[Theorem~2.6]{Kunisky2019NotesLowDeg}. For the complex case, we essentially reduce the model to the real Gaussian additive model by considering real and imaginary parts of the observation separately.

Consider such $2N$-dimensional observation vector $Y = (Y_{\Re}, Y_{\Im})\in\R^{2N}$ as follows:
\begin{align*}
    Y_{\Re} = \sqrt{2} \Re(\theta) + Z_{\Re} \\
    Y_{\Im} = \sqrt{2} \Im(\theta) + Z_{\Im},
\end{align*}
where $Z_{\Re}, Z_{\Im} \in \R^N$ are real-valued noise vectors with independent Gaussian entries. The constant $ \sqrt{2}$ before the signal comes from the fact that the real and imaginary parts of the original complex noise vector have variance~$1/2$. To match the unit variance in the real case, we multiply the observation by $ \sqrt{2}$.

For this decomposed observation the low-degree likelihood ratio is written as 
\begin{align*}
\|L_n^{\le D}(Y)\|^2  &= \E_{\theta, \theta^\prime \sim P_N} \lt [\sum_{d=0}^D \frac{1}{d!}    \lt \langle \sqrt{2} \begin{pmatrix}
    \Re(\theta) \\ \Im(\theta)
\end{pmatrix}, \sqrt{2}\begin{pmatrix}
    \Re(\theta^\prime)\\ \Im(\theta^\prime)
\end{pmatrix} \rt\rangle^d \rt]\\
&= \E_{\theta, \theta^\prime \sim P_N} \lt [\sum_{d=0}^D \frac{1}{d!}  2^d \lt (\langle \Re(\theta),  \Re(\theta^\prime)\rangle + \langle \Im(\theta), \Im(\theta^\prime) \rangle\rt)^d \rt] \\
&= \E_{\theta, \theta^\prime \sim P_N} \lt [\sum_{d=0}^D \frac{1}{d!} \lt(2\Re \langle \theta, \theta^\prime \rangle\rt)^d \rt].
\end{align*}
This concludes the proof.
\end{proof}
With this result, we easily derive the second moment of LDLR for the considered synchronization models.
\begin{lemma}\label{lm:ldlr_angular_cyclic_base_expression} For the angular synchronization model $\GSynch(\bb S, L, \lambda)$, the LDLR second moment is expressed as
    $$
    \|L_{n, \bb S}^{\le D}\|^2 = \sum_{d=0}^D \frac{1}{d!} \E\lt( \frac{\lambda^2}{n}\sum_{\ell=1}^L \ip{x^{(\ell)}, \bm 1_n }^2 \rt)^d,
    $$
    where $\bm 1_n \in \R^n$ is a vector of all ones. 
    For the synchronization model $\GSynch(\bb Z_L, \lambda)$, we can write LDLR as
    $$
    \|L_{n, \bb Z_L}^{\le D}\|^2 = \sum_{d=0}^D \frac{1}{d!} \E\lt( \frac{\lambda^2}{n}\sum_{\ell=1}^L \frac{\beta_\ell }{2}\ip{x^{(\ell)}, \bm 1_n }^2 \rt)^d,
    $$
    where $\beta_\ell = 1$ only for $\ell = L/2$ when $L$ is even and $\beta_\ell = 2$ otherwise.
\end{lemma}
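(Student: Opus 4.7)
The plan is to view each observation tuple as a single Gaussian additive model and invoke \Cref{lm:ldlr_gaussian_additive_model}. First I would rescale by $\sqrt n$, replacing $Y_\ell$ with $\tilde Y_\ell \colonequals \sqrt n\, Y_\ell = \theta_\ell + W_\ell$ for $\theta_\ell \colonequals \frac{\lambda}{\sqrt n}\, x^{(\ell)}(x^{(\ell)})^{*}$, so that the noise matches the $\GUE$/$\GOE$ law of \Cref{def:goe} exactly. This rescaling does not affect the LDLR. Extracting the independent scalar coordinates by vectorizing the upper triangle of each matrix then yields a concatenated Gaussian additive model with a mix of real coordinates (on diagonals, and on off-diagonals of GOE channels) and complex coordinates (off-diagonals of GUE channels), to which \Cref{lm:ldlr_gaussian_additive_model} applies coordinate-by-coordinate with $\beta = 1$ or $\beta = 2$ respectively.

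The main work is to compute the covariance-weighted inner product $\langle \theta, \theta'\rangle_{\Sigma^{-1}}$ between two independent signal copies $\theta, \theta'$. Keeping track of the diagonal versus off-diagonal variances and using the Hermitian relation $\theta_{ji} = \overline{\theta_{ij}}$ (for GUE) or the symmetric relation $\theta_{ji} = \theta_{ij}$ (for GOE), a short calculation shows that one frequency's contribution equals $\tr(\theta_\ell \theta'_\ell)$ in the GUE case and $\tfrac{1}{2}\tr(\theta_\ell \theta'_\ell)$ in the GOE case; both can be written uniformly as $\tfrac{\beta_\ell}{2}\tr(\theta_\ell \theta'_\ell)$ with $\beta_\ell \in \{1, 2\}$ matching the statement. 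Plugging in the rank-one form of $\theta_\ell$ and using cyclicity of the trace yields $\tr(\theta_\ell \theta'_\ell) = \tfrac{\lambda^2}{n} |\langle x^{(\ell)}, x'^{(\ell)}\rangle|^2$. This is the step most prone to bookkeeping errors, since getting the relative normalization between GOE and GUE channels right is exactly what produces the $\tfrac{\beta_\ell}{2}$ factor distinguishing the two formulas.

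The last step is to collapse the expectation over the independent pair $(x, x')$ down to a single Haar-distributed vector. Both priors---$\Unif(U(1))$ on the circle and $\Unif(\{\omega_k\}_{k=0}^{L-1})$ on the $L$-th roots of unity---are Haar measures on abelian groups, so the entrywise ratio $z_j \colonequals x_j \overline{x'_j}$ is again Haar-distributed with independent coordinates. Since $\langle x^{(\ell)}, x'^{(\ell)}\rangle = \sum_j (x_j \overline{x'_j})^\ell = \langle z^{(\ell)}, \bm 1_n\rangle$, relabeling $z \to x$ inside the expectation yields the stated formulas (interpreting $\langle x^{(\ell)}, \bm 1_n\rangle^2$ as $|\langle x^{(\ell)}, \bm 1_n\rangle|^2$).
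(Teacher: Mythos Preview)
Your proposal is correct and follows essentially the same three-step approach as the paper: reduce to the Gaussian additive model of \Cref{lm:ldlr_gaussian_additive_model}, compute the inner product as a trace $\frac{\beta_\ell}{2}\tr(\theta_\ell\theta'_\ell) = \frac{\beta_\ell}{2}\frac{\lambda^2}{n}|\langle x^{(\ell)}, x'^{(\ell)}\rangle|^2$, and then use Haar invariance to collapse the two independent signal copies into one. The only cosmetic difference is in how the Hermitian noise is handled---you vectorize the upper triangle and track the diagonal variances via a covariance-weighted inner product, whereas the paper replaces the Hermitian noise by an equivalent model with an asymmetric noise matrix of doubled variance (citing \cite[Appendix~A.2]{Kunisky2019NotesLowDeg}) before flattening the full matrix; both routes produce the same $\frac{\beta_\ell}{2}$ factor.
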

\begin{proof} The only difference between considered model and the model in \Cref{lm:ldlr_gaussian_additive_model} is that the noise matrix is symmetric. %
Nevertheless, the model is equivalent to observing a signal perturbed by a Gaussian matrix with independent entries with the scaled variance to match the original SNR. The new asymmetric model is then 
$$
\tilde{Y}_\ell = \frac{\lambda}{n} X_\ell X_\ell^* + \sqrt{\frac{2}{n }} \tilde{W}_\ell,
$$
where $\tilde{W}_\ell$ is a random matrix, whose entries are independent Gaussian random variables, either complex or real depending on the type of the frequency. We scale the variance in $\sqrt{2}$ times to match the original SNR. 
This equivalence can be also verified by writing entrywise density of $Y_{kj}$. We omit these details and refer to \cite[Appendix~A.2]{Kunisky2019NotesLowDeg} for a more detailed explanation. 

It is easy to see that now both angular synchronization models are Gaussian additive models as in \Cref{lm:ldlr_gaussian_additive_model} with the signal $\theta \in \C^{Ln^2}$ which consists of concatenated flattened matrices $\frac{\lambda}{\sqrt{2n}}x^{(\ell)} x^{(\ell)*} $ for $\ell = 1, \dots, L$.

    We start first with the uniform $U(1)$ case since all frequencies are complex. The desired expression follows from simple algebraic manipulations:
    $$
    \ip{\theta, \theta^\prime} = \sum_{\ell=1}^L \frac{\lambda^2}{2n} \tr\lt(x^{(\ell)} x^{(\ell)*} (x^\prime)^{(\ell)} (x^\prime)^{(\ell)*}  \rt) = \frac{\lambda^2}{2n} \sum_{\ell=1}^L \ip{x^{(\ell)}, (x^\prime)^{(\ell)}}^2.
    $$
    Since the prior is symmetric under rotations, the random variables $\ip{x^{(\ell)}, (x^\prime)^{(\ell)}}$ and $\ip{x^{(\ell)}, \bm 1_n}$ are identically distributed, and thus we can replace $\ip{x^{(\ell)}, (x^\prime)^{(\ell)}}$ by $\ip{x^{(\ell)}, \bm 1_n}$.
    
    By substituting the resulting expression into \eqref{eq:ldlr_GAM} we complete the proof for the uniform angular case. The same proof applies for $L$-cyclic prior with odd $L$.

    For $L$-cyclic prior with even $L$, we have to account for different type of frequencies. We do it in the same way as in the proof of \Cref{lm:ldlr_gaussian_additive_model} by reducing the model to the real Gaussian model. The signal $\theta$ consists of scaled real and imaginary parts for complex frequencies 
    $$
    \frac{\lambda}{\sqrt{2n}} \sqrt{2}\Re \lt(x^{(\ell)} x^{(\ell)*}\rt), \quad \frac{\lambda}{\sqrt{2n}} \sqrt{2}\Im \lt(x^{(\ell)} x^{(\ell)*}\rt)
    $$
    and the unchanged signal part for $\ell=L/2$ 
    $$
    \frac{\lambda}{\sqrt{2n}}x^{(L/2)} x^{(L/2)*}.
    $$
    Similarly as for $\bb S$ case, we obtain,
    $$
    \ip{\theta, \theta^\prime} = \sum_{\ell=1}^L \frac{\lambda^2}{2n} \beta_\ell \ip{x^{(\ell)}, (x^\prime)^{(\ell)}}^2 =  \frac{\lambda^2}{n} \sum_{\ell=1}^L \frac{\beta_\ell}{2} \ip{x^{(\ell)}, (x^\prime)^{(\ell)}}^2.
    $$
    Replacing $\ip{x^{(\ell)}, (x^\prime)^{(\ell)}}$ by $\ip{x^{(\ell)}, \bm 1_n}$ and substituting the resulting expression into \eqref{eq:ldlr_GAM} completes the proof.
\end{proof}

Recall that the second moment of the LDLR involves the moments of the random variable $\sum_{\ell=1}^L | \langle x^{(\ell)}, \bm 1_n \rangle |^2 $. We will rewrite this quantity as cardinality of a certain set to show the connection between two models. It will be easy to see that the set for the $L$-cyclic prior is contained in the corresponding set for the uniform $U(1)$ prior which will form the basis for the proof of \Cref{lm:reduction_angular_cyclic}. 
\begin{lemma}\label{lm:expect_as_cardinality}
    Let $x \in \bb C^n$ be sampled either from the uniform angular prior or $L$-cyclic prior. Then we have
    $$
    \E_{x} \lt( \sum_{\ell=1}^L | \langle x^{(\ell)}, \bm 1_n \rangle |^2 \rt)^d = \lvert M_d \rvert,
    $$
    where for the angular prior
    $$
    M_d = M_d^{\bb S} \colonequals \Big \{ \bm{\ell} \in [L]^d, \mathbf{a}, \mathbf{b} \in [n]^d : \sum_{j = 1}^d \ell_j (\mathbf{e}_{a_j} - \mathbf{e}_{b_j}) = 0 \Big \}
    $$
    and for the cyclic prior, %
    $$
    M_d = M_d^{\bb Z_L} \colonequals \Big \{ \bm{\ell} \in [L]^d, \mathbf{a}, \mathbf{b} \in [n]^d : \sum_{j = 1}^d \ell_j (\mathbf{e}_{a_j} - \mathbf{e}_{b_j}) = 0 \pmod L \Big \}.
    $$
\end{lemma}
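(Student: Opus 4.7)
The plan is to expand everything in sight and use independence of coordinates to reduce the expectation to a counting problem. First I would write
\begin{equation*}
\langle x^{(\ell)}, \bm 1_n\rangle = \sum_{a=1}^n x_a^\ell, \qquad |\langle x^{(\ell)}, \bm 1_n\rangle|^2 = \sum_{a,b=1}^n x_a^\ell \bar x_b^\ell,
\end{equation*}
and then multiply out
\begin{equation*}
\left(\sum_{\ell=1}^L |\langle x^{(\ell)}, \bm 1_n\rangle|^2\right)^d = \sum_{\bm\ell\in[L]^d}\sum_{\mathbf a,\mathbf b\in[n]^d}\prod_{j=1}^d x_{a_j}^{\ell_j}\bar x_{b_j}^{\ell_j}.
\end{equation*}
Pushing the expectation inside the sum and using independence of the coordinates $x_1,\dots,x_n$, each summand factors as $\prod_{k=1}^n \E[x_k^{m_k}\bar x_k^{n_k}]$, where $m_k = \sum_{j:a_j=k}\ell_j$ and $n_k = \sum_{j:b_j=k}\ell_j$.

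Next I would compute the single-coordinate expectation under each prior. For the uniform $U(1)$ prior, writing $x_k=e^{i\varphi_k}$ with $\varphi_k\sim\Unif([0,2\pi))$ gives $\E[x_k^{m_k}\bar x_k^{n_k}] = \E e^{i(m_k-n_k)\varphi_k}$, which is $1$ if $m_k=n_k$ and $0$ otherwise. For the $L$-cyclic prior, $x_k=e^{2\pi i v_k/L}$ with $v_k\sim\Unif(\bb Z_L)$ gives $\E[x_k^{m_k}\bar x_k^{n_k}] = 1$ if $m_k\equiv n_k\pmod L$ and $0$ otherwise. Thus the factored expectation is an indicator that for every $k\in[n]$, $m_k=n_k$ (resp.\ modulo $L$).

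The key observation, which translates this into the cardinality condition defining $M_d$, is that $m_k-n_k$ is precisely the $k$-th coordinate of $\sum_{j=1}^d \ell_j(\mathbf e_{a_j}-\mathbf e_{b_j})\in\bb Z^n$. Hence the single-index expectation equals the indicator that $(\bm\ell,\mathbf a,\mathbf b)\in M_d^{\bb S}$ (equality in $\bb Z^n$) in the uniform case, and $M_d^{\bb Z_L}$ (equality modulo $L$) in the cyclic case. Summing over triples $(\bm\ell,\mathbf a,\mathbf b)$ then yields $|M_d|$, completing the argument. There is no real obstacle here: the only thing to be careful about is the bookkeeping identifying the exponent $m_k$ with the coordinate sum $\sum_{j:a_j=k}\ell_j$, which is precisely what the compact $\mathbf e_{a_j}-\mathbf e_{b_j}$ notation is designed to encode.
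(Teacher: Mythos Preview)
Your proposal is correct and follows essentially the same approach as the paper: expand the $d$-th power into a sum over $(\bm\ell,\mathbf a,\mathbf b)$, factor the expectation by independence of coordinates, and use that $\E[x_k^{m_k}\bar x_k^{n_k}]$ is the indicator of $m_k=n_k$ (resp.\ $m_k\equiv n_k\pmod L$) to identify each term with the membership indicator of $M_d$. The paper writes things in the phase variables $\varphi_k$ from the outset while you work directly with $x_k^{m_k}\bar x_k^{n_k}$, but this is a purely cosmetic difference.
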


\begin{proof}%
    Using the angle representation as in the beginning of the section, we rewrite the moment as
\begin{equation*}
    \begin{split}
        &\E_{x} \Big( \sum_{\ell=1}^L | \langle x^{(\ell)},1 \rangle |^2 \Big)^d = \E_{x}\Big(   \sum_{\ell=1}^L \big | \sumjn e^{i \ell \varphi_j(x)} \big |^2 \Big)^d,
    \end{split}
\end{equation*}
where $\varphi_j(x)$ is a phase of coordinate $x_j = e^{i \varphi_j}$.

Expanding the scalar product and power we get
\begin{align}
        &\E_{\varphi_1, \dots \varphi_n} \Big(   \sum_{\ell=1}^L \big | \sumjn e^{i \ell \varphi_j} \big |^2 \Big)^d = \E \Big ( \sum_{\ell=1}^L \sum_{a, b =1}^n e^{i \ell (\varphi_a - \varphi_b)} \Big)^d  \nonumber\\
        &\quad =\E \sum_{\ell_1, \dots, \ell_d = 1}^L \sum_{\substack{a_1, \dots, a_d = 1\\b_1, \dots, b_d = 1}}^n \exp{\big[ i (\ell_1 (\varphi_{a_1} - \varphi_{b_1}) + \dots + \ell_d (\varphi_{a_d} - \varphi_{b_d}) \big]} \nonumber\\
        &\quad =\E \sum_{\ell_1, \dots, \ell_d = 1}^L \sum_{\substack{a_1, \dots, a_d = 1\\b_1, \dots, b_d = 1}}^n \exp{\lt[ i \sum_{s=1}^n  \lt( \varphi_s\sum\limits_{\substack{j, k: a_j = s,\\ b_k = s}} (\ell_j - \ell_k)  \rt) \rt]} \label{eq:power_d_collecting_phi}\\
        &\quad= \sum_{\ell_1, \dots, \ell_d = 1}^L \sum_{\substack{a_1, \dots, a_d = 1\\b_1, \dots, b_d = 1}}^n \prod_{s=1}^n \E \exp{\big[ i K_{s} \varphi_s \big]},\label{eq:expect_powerd}
\end{align}
where $K_s = \sum\limits_{\substack{j, k: a_j = s,\\ b_k = s}} (\ell_j - \ell_k)$. In \eqref{eq:power_d_collecting_phi}, we collect coefficients of $\varphi_s$ with the same index $a_j = s$ and $b_k = s$. In the last line \eqref{eq:expect_powerd}, we used independence between the coordinates of the signal. 

Note that for the uniform $U(1)$ prior, it holds $\E e^{i K_s \varphi_s } = \mathds{1}(\{K_s = 0\})$. For the $L$-cyclic prior, $\E e^{i K_s \varphi_s } = \mathds{1}(\{K_s = 0 \mod L\})$. Consequently, the expression in \eqref{eq:expect_powerd} equals the number of terms where the exponent expression is zero. In other words, it equals the cardinality of the set
$$
\Big \{ \bm{\ell} \in [L]^d, \mathbf{a}, \mathbf{b} \in [n]^d : \sum_{j = 1}^d \ell_j (\mathbf{e}_{a_j} - \mathbf{e}_{b_j}) = 0 \Big \},
$$
where $\mathbf{e}_j, j = 1, \dots, n$  are standard basis vectors in $\bb R^n$. For the cyclic prior, the equality in the above display is modulo $L$.
\end{proof}
\noindent
With this result, the proof of \Cref{lm:reduction_angular_cyclic} is straightforward.
\begin{proof}[Proof of \Cref{lm:reduction_angular_cyclic}]
Note that for the sets defined in \Cref{lm:expect_as_cardinality}, it holds
$$
M_d^{\bb S} \subseteq M_d^{\bb Z_L},
$$
for every $d\ge 0$. Hence by \Cref{lm:expect_as_cardinality}, we have 
\begin{align*}
    \| L^{\le D}_{n, \bb S} \|^2 &= \sum_{d=0}^D \frac{1}{d!}  \E_{x} \Big ( \frac{\lambda^{2}}{n} \sum_{\ell=1}^L \langle x^{(\ell)} , 1 \rangle^2 \Big)^d \\
 &= \sum_{d=0}^D \frac{1}{d!} \lt |M_d^{\bb S}\rt| \\
 &\le \sum_{d=0}^D \frac{1}{d!} \lt |M_d^{\bb Z_L}\rt |  =\| L^{\le D}_{n, \bb Z_L} \|^2, 
\end{align*}
completing the proof.
\end{proof}
\noindent
Thus when we bound $\| L^{\le D}_{n, \bb Z_L} \|^2 $ we immediately obtain also a bound on $\| L^{\le D}_{n, \bb S} \|^2$. 

\subsection{Multinomial expression of $\| L^{\le D}_n \|^2$ for $\GSynch(\bb Z_L, \lambda)$}\label{sec:l-cyclic}
In this section, we provide the first part of the argument for the hardness of $\GSynch(\bb Z_L, \lambda)$. The idea of this part is to rewrite the second moment of the ratio using integer random variables. These variables count how many times every root of unity $\omega_\ell$ was sampled in the signal vector $x$.

\begin{lemma}\label{lm:ldlr_counts_ng_cyclic} %

Let $n_0, \dots, n_{L-1}$ be distributed according to the multinomial distribution with probabilities $1 / L$ and number of trials $n$ (that is, the numbers of balls in each of $L$ bins after $n$ balls are thrown independently each into a uniformly random bin).
The second moment of the LDLR can be rewritten as following. 
\begin{equation}\label{eq:ldlr_counts_nL_cyclic}
    \| L^{\le D}_n \|^2 = \sum_{d=0}^D \frac{1}{d!}  \frac{\lambda^{2d}}{n^d} \E \left(  \frac{L-1}{2}\sum_{\ell = 0}^{L-1} n_\ell^2 - \frac{1}{2} \sum_{\substack{\ell, k = 0\\ \ell\ne k}}^{L-1} n_\ell n_k \right)^d ,
\end{equation}
\end{lemma}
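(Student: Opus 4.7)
The plan is to start from \Cref{lm:ldlr_angular_cyclic_base_expression} applied to $\GSynch(\bb Z_L, \lambda)$ and rewrite the bracketed random variable purely in terms of the bin counts $n_k \colonequals |\{j \in [n] : x_j = \omega_k\}|$ for $k = 0, \dots, L-1$. Since the entries of $x$ are drawn i.i.d.\ uniformly from $\{\omega_0, \dots, \omega_{L-1}\}$, the vector $(n_0, \dots, n_{L-1})$ is exactly $\mathrm{Multinomial}(n; 1/L, \dots, 1/L)$, which matches the distribution in the statement.

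First I would eliminate the real/complex asymmetry in the weights $\beta_\ell$. Because $x_j^L = 1$ and $|x_j| = 1$, we have $x^{(L-\ell)} = \overline{x^{(\ell)}}$, so $|\langle x^{(L-\ell)}, \bm{1}_n\rangle|^2 = |\langle x^{(\ell)}, \bm{1}_n\rangle|^2$. Pairing each complex frequency $\ell \in \{1, \dots, \lceil L/2 \rceil - 1\}$ (which carries weight $\beta_\ell/2 = 1$) with its conjugate $L-\ell$, and separately (when $L$ is even) treating the self-conjugate frequency $L/2$ with weight $\beta_{L/2}/2 = 1/2$, a short bookkeeping argument gives
\[
\sum_{\ell} \frac{\beta_\ell}{2} \, |\langle x^{(\ell)}, \bm{1}_n\rangle|^2 \;=\; \frac{1}{2} \sum_{\ell=1}^{L-1} |\langle x^{(\ell)}, \bm{1}_n\rangle|^2,
\]
where the right-hand side now runs symmetrically over all $L-1$ non-trivial characters of $\bb{Z}_L$.

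Second, writing $\langle x^{(\ell)}, \bm{1}_n \rangle = \sum_{k=0}^{L-1} n_k \omega_k^\ell$ and expanding, I would obtain
\[
\sum_{\ell=1}^{L-1} |\langle x^{(\ell)}, \bm{1}_n \rangle|^2 \;=\; \sum_{k, k' = 0}^{L-1} n_k n_{k'} \sum_{\ell=1}^{L-1} e^{2\pi i \ell (k-k')/L}.
\]
The inner sum equals $L\, \mathds{1}[k = k'] - 1$ by orthogonality of characters of $\bb{Z}_L$ (using that $k \equiv k' \pmod L$ iff $k = k'$ for $k, k' \in \{0, \dots, L-1\}$), so the double sum collapses to $L \sum_k n_k^2 - n^2$.

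Finally, using $n^2 = (\sum_k n_k)^2 = \sum_k n_k^2 + \sum_{\ell \ne k} n_\ell n_k$, I would rewrite
\[
\frac{1}{2}\Big(L \sum_k n_k^2 - n^2\Big) \;=\; \frac{L-1}{2} \sum_{\ell=0}^{L-1} n_\ell^2 - \frac{1}{2}\sum_{\substack{\ell, k = 0 \\ \ell \ne k}}^{L-1} n_\ell n_k,
\]
which matches the target expression once substituted back into \Cref{lm:ldlr_angular_cyclic_base_expression}. The only slightly delicate step is the first conjugate-pairing identity, which must be verified separately for even and odd $L$ using the prescribed values of $\beta_\ell$; the remainder is a single character-orthogonality calculation followed by elementary algebra.
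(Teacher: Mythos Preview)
Your proposal is correct and follows essentially the same route as the paper. Both arguments start from \Cref{lm:ldlr_angular_cyclic_base_expression}, symmetrize over conjugate frequencies (the paper calls this ``completing the sum''), and then use Fourier/character orthogonality: the paper packages this step as Parseval's theorem applied to the discrete Fourier transform $\hat n_k$ of the counts, whereas you expand $|\langle x^{(\ell)}, \bm 1_n\rangle|^2$ directly and sum $\sum_{\ell=1}^{L-1} e^{2\pi i\ell(k-k')/L} = L\,\mathds{1}[k=k'] - 1$; these are the same calculation.
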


The argument and the resulting expression is the same for all finite groups of size $L$ (see \Cref{lm:ldlr_counts_ng}), but we present it for this special case for clarity and ease of understanding, as the broader argument relies on some notions of representation theory. The remainder of the proof is universal for all groups and can be found in \Cref{sec:combinatorics_proof_finite}.

\begin{proof}
    In this proof, we will focus only on the case of even $L$, as the other case is analogous. 
For $x \in \C^n$ supported only on roots of unity $\omega_0, \dots, \omega_{L-1}$, define $n_\ell(x) \colonequals |\{j: x_j = \omega_\ell  \}|\in \bb N$.
    Recall from \Cref{lm:ldlr_angular_cyclic_base_expression},
    $$
    \|L_{n, \bb Z_L}^{\le D}\|^2 = \sum_{d=0}^D \frac{1}{d!} \E \lt( \frac{\lambda^2}{n}\sum_{\ell=1}^{L/2} \frac{\beta_\ell }{2}\ip{x^{(\ell)}, 1 }^2 \rt)^d.
    $$
    With the introduced counts $n_\ell(x)$, we can write $\langle x, 1 \rangle = n_0 + n_1 e^{2 \pi i / L} + \dots + n_{L-1} e^{2 \pi i (L - 1) / L}$. 
    Note that if $x$ is sampled from the $L$-cyclic prior, $n_0, \dots, n_{L-1}$ follow multinomial distribution. Hence, the second moment of the likelihood ratio can be expressed as follows:
\begin{equation*}
\begin{split}
    \| L^{\le D}_n \|^2 = \sum_{d=0}^D \frac{\lambda^{2d}}{n^d d!}  \E \bigg ( &\sum_{\ell = 1}^{L/2-1} \lt | n_0 + n_1 e^{2 \pi i \ell / L } + \dots + n_{L-1} e^{2 \pi i \ell (L - 1)  / L} \rt |^{2}  \\
    &\quad+ \frac 1 2  \cdot (n_0 - n_1 + \dots + n_{L-2} - n_{L-1})^2 \bigg )^d,
\end{split}
\end{equation*}
where the expectation is now taken with respect to $n_0, \dots, n_{L-1}$.

    We recognize the discrete Fourier transform of $n_\ell$ in the inner sum. By denoting the Fourier coefficients as $\hat{n}_k = \sum_{\ell=0}^{L-1} e^{2\pi i \ell k / L} x_\ell$ and completing the sum, we have
    $$
    \| L^{\le D}_n \|^2 = \sum_{d=0}^D \frac{\lambda^{2d}}{n^d d!} \lt ( \frac{1}{2}\sum_{\ell=0}^{L-1} |\hat{n}_\ell |^2 - \frac{1}{2} |\hat{n}_0|^2 \rt)^d.
    $$
    By Parseval's theorem, 
    $$
    \sum_{\ell=0}^{L-1} n_\ell^2 = \frac{1}{L} \sum_{k=0}^{L-1} |\hat{n}_k|^2,
    $$
    and hence, 
    \begin{align*}
    \| L^{\le D}_n \|^2 &= \sum_{d=0}^D \frac{\lambda^{2d}}{n^d d!} \lt ( \frac{L}{2}\sum_{\ell=0}^{L-1} |n_\ell |^2 - \frac{1}{2} \lt ( \sum_{\ell=0}^{L-1} n_\ell \rt)^2 \rt)^d \\
    &= \sum_{d=0}^D \frac{\lambda^{2d}}{n^d d!} \lt ( \frac{L-1}{2}\sum_{\ell=0}^{L-1} n_\ell ^2 - \frac{1}{2}  \sum_{\substack{\ell,k=0\\ \ell\ne k}}^{L-1} n_\ell n_k  \rt)^d,
    \end{align*}
    completing the proof.
\end{proof}

\subsection{Sketch of combinatorial analysis of $\GSynch(\bb Z_L, \lambda)$}\label{sec:sketch_combinatorial_proof}

Finally, we very briefly outline the remainder of the combinatorial arguments that we will use for bounding this quantity.
The main observation is to view the quantity inside the expectation as a quadratic form on the vector of $n_i$'s:
\begin{equation}
\frac{L-1}{2}\sum_{\ell = 0}^{L-1} n_\ell^2 - \frac{1}{2} \sum_{\substack{\ell, k = 0\\ \ell\ne k}}^{L-1} n_\ell n_k = (n_0, \dots, n_{L-1})^{\top} \lt(\frac{L}{2} I_{L\times L} - \frac{1}{2} \bm 1_L \bm 1_L^{\top}\rt) (n_0, \dots, n_{L-1}).
\end{equation}
We note that the the matrix involved is positive semidefinite, and the all-ones vector $\bm 1_L$ is an eigenvector of this matrix with eigenvalue zero.
All other eigenvectors $v$ will be orthogonal to $\bm 1_{L}$ and have positive eigenvalue $\lambda > 0$.
Our main idea will be to consider the spectral decomposition of this matrix, which will lead to a sum of terms of the form $\lambda (\sum_{i = 0}^{L - 1} v_i n_i)^2$ where $\sum v_i = 0$.
The linear expressions inside will be \emph{centered} random variables, and we will be able to control their moments by a careful combinatorial expansion.

An analogous task will arise in the analysis of arbitrary finite groups $G$, which we show below before giving the details of this analysis.
We also give further details for the special case $L = |G| = 3$ as an illustrative example in Section~\ref{sec:finite_3_group}.

\section{Synchronization over finite groups: main results}\label{sec:finite_groups}

\subsection{Short preliminaries on representation theory}\label{sec:representation_theory}
To define synchronization model over a compact group, we need to introduce several concepts of group and representation theory. We assume the reader to be familiar with the basics of representation theory and revisit only a few key concepts for the completeness of the exposition. For a more in-depth review on the subject we refer the reader to~\cite{brocker2013representations}.

\paragraph{Normalized Haar measure} We define signal prior using the normalized Haar measure which can be viewed as an analog of uniform distribution over a group. On a compact group $G$, the Haar measure is the unique left-invariant measure, moreover, it is also invariant to the right translation by any group element and is finite, $\mu(G) < \infty$.
Informally speaking, the Haar measure assigns an invariant volume to subsets of a group. For instance, on finite groups, every Haar measure is a multiple of counting measure, and the normalized Haar measure accounts to the uniform distribution in a classical sense. On compact groups, the normalized Haar measure defines the unique invariant Radon probability measure. We will use this measure as the prior of the signal components and we will understand integrals of the form 
$$
\int_G f(g) \rm d g
$$
to be taken with respect to the Haar measure. %

\paragraph{Irreducible representations}
For $\bb S$-synchronization, the multi-frequency model \eqref{eq:gaussian_synch_unitary} can be seen as observing the signal through noisy channels corresponding to different Fourier modes. This model can be motivated by taking Fourier transform of non-linear observation function of relative alignments, see for the details~\cite{Bandeira_NonUniqueGames_2020}. In case of general compact groups, the expansion is given in terms of the irreducible representations as described by the Peter-Weyl theorem. 
To highlight the connection with the angular synchronization, observe that $U(1) $ is an irreducible representation of $SO(2)$, and the set of the roots of unity $\{\omega_\ell\}_{\ell=1}^L = \{e^{2 \pi i \ell / L}\}_{\ell=1}^L$ is an irreducible representation of $\bb Z_L$. 

We note a few important properties of the irreducible representations. First, if $G$ is a finite group, it has only a finite number of irreducible representations. Secondly, for a nontrivial irreducible representation $\rho$ we have $\int_G \rho(g) d g = 0$.

\paragraph{Peter-Weyl decomposition} %
The representation-theoretic analog of the Fourier series is given by Peter-Weyl decomposition. 
The Peter-Weyl theorem provides an orthonormal basis with respect to the Hermitian inner product defined on $L^2(G)$ as following 
$$
\langle f, h\rangle = \int_G  f(g) \bar h(g) \rm d g.
$$
This orthonormal basis is formed with the coefficients of scaled irreducible representations, $\sqrt{d_\rho} \rho_{ij}$ indexed over complex irreducible representation $\rho$ of $G$ and indices $1 \le i, j \le d_\rho$, where $d_\rho$ is the dimension of $\rho$.

For a given function $f : G \to \R $, its Peter-Weyl decomposition can be written in terms of irreducible representations,
$$
f(g) = \sum_\rho \sqrt{d_\rho} \langle \hat f(g)_\rho , \rho(g)\rangle,
$$
where the coefficients $\hat {f}_\rho $ are matrices of dimension of the respective representation $\rho$. Similarly to Fourier transform, these coefficients can be computed from $f(g)$ by integration:
$$
\hat f_\rho = \int_G \sqrt{d_\rho}\rho(g)f(g) \rm d g.
$$

For the cyclic group of size $L$, the irreducible representations are $\{e^{2\pi i k / L}\}_{k=1}^L$, and Peter-Weyl decomposition reduces to discrete Fourier transform 
$$
f(g) =\frac{1}{L} \sum_{k=1}^L \hat f(g)_k e^{2 \pi i k / L}.
$$

\paragraph{Representations of real, complex, quaternionic type}
Every irreducible complex representation has one of three types: real, complex, or quaternionic. In the considered problem, we have to slightly adjust the model and the analysis depending on the type.

A complex representation $\rho$ is of \emph{real type} if it is isomorphic to a real-valued representation. Without loss of generality, we can assume that $\rho$ is a real-valued scalar or matrix with real-valued entries. 

We say that representation $\rho$ is of \emph{complex type} if it is not isomorphic to its complex conjugate $\bar \rho(g) = \overline{\rho(g)}$. %

Finally, representation $\rho$ is of \emph{quaternionic type} if it can be defined over quaternion field $\bb H$, i.e., matrix with a quaternion-valued entries $a + b i + c j + d k $, where $1, i, j, k$ are the basis vectors. We will refer to the first coefficient as the real part, $\Re(a + b i + c j + d k) = a$.

For our setting it will be more convenient to define these representation over complex numbers. %
Each quaternion $a + b i + c j + d k$ can be equivalently represented as $2\times 2$ block of the form
$$
\begin{pmatrix}
    a + b i & c  + d i \\
    -c + d i & a - b i
\end{pmatrix}.
$$
The expression for the real part becomes the real part of the first  (or, equivalently, second) diagonal element. The squared norm of quaternion is defined by the sum of squared coefficients or half the Frobenius norm:
$$
\|a + bi + cj +dk \|^2 = \frac{1}{2} \lt \| \begin{pmatrix}
    a + b i & c  + d i \\
    -c + d i & a - b i
\end{pmatrix}\rt \|_F^2 = a^2 + b^2 + c^2 + d^2. 
$$
The conjugate is defined as $(a + b i + c j + d k)^* = a - b i - c j - d k$, or, equivalently, as a conjugate transpose of the respective complex matrix. For two quaternionic vectors $x_1, x_2 \in \bb H^d $ ($x_1, x_2 \in \C^{2d \times 2}$) the scalar product is defined as 
$$
\langle x_1,  x_2 \rangle = \sum_{\ell=1}^d (x_1)_{\ell} (x_2)_{\ell}^*,
$$
where $(x_1)_{\ell}$ denotes $\ell$-th element of $x_1$. 
For the analysis, we will need the expression for the real part of such scalar product which for a single element can be written as
\begin{align*}
&\Re \lt (\langle a_1 + b_1 i + c_1 j + d_1 k , a_2 + b_2 i + c_2 j + d_2 k \rangle \rt) \\
&\quad= a_1 a_2 + b_1 b_2 + c_1 c_2 + d_1 d_2 \\
&\quad= \frac{1}{2} \Re \tr\lt ( \begin{pmatrix}
    a_1 + b_1 i & c_1  + d_1 i \\
    -c_1 + d_1 i & a_1 - b_1 i
\end{pmatrix} \begin{pmatrix}
    a_2 + b_2 i & c_2  + d_2 i \\
    -c_2 + d_2 i & a_2 - b_2 i
\end{pmatrix}^* \rt).
\end{align*}

\subsection{Model definition and formal statement of results}

To account for the different types of the representation, we adjust the noise model depending on the type of the frequency. For the definitions of Gaussian orthogonal and unitary ensembles, refer to \Cref{def:goe}. Here we define one remaining type of the ensemble, namely, Gaussian symplectic ensemble.  %

\begin{definition}\label{def:gaussian_type}
    We say that a random variable $w$ is a standard Gaussian variable
       of \emph{quaternionic} type if $w \in \C^{2\times 2}$ and $w$ encodes a quaternion whose four values are drawn from $\mathcal{N}(0, 1/4)$, i.e., 
        $$
        w = \begin{pmatrix}
    a + b i & c  + d i \\
    -c + d i & a - b i
\end{pmatrix},
        $$
        where $a, b, c, d \sim \mathcal{N}(0, 1/4)$ and are independent. 
\end{definition}

\begin{definition}\label{def:gse} %
Let $W \in \C^{2n \times 2n}$ be a random Hermitian matrix. We say that $W$ is drawn from Gaussian symplectic ensemble (GSE) if the following conditions hold. The $2\times 2$ blocks of $W$ decode Gaussian variables of quaternionic type and they are independent except for the symmetry. The off-diagonal blocks are standard Gaussian random variables of quaternionic type, and the diagonal blocks decode the real Gaussian variables with $\mathcal{N}(0, 1/2)$ in quaternionic form.  
\end{definition}
See also \Cref{table:GOE-GUE-GSE} for the summary of Gaussian ensembles definitions. 

Observe that each of the defined ensembles can also be seen as averaging a matrix with independent Gaussian variables (of the respective type) and its conjugate transpose. This perspective explains having real Gaussians on the diagonal and the difference in variance between off-diagonal and diagonal entries. We choose this model because the signal counterpart in the observation is Hermitian itself. This means that we observe each off-diagonal element $(k, j), k \ne j$ twice: $(Y_{\rho})_{kj}$ and its conjugate. If we had different noise on these elements, we could easily reduce the variance by taking an average of $(Y_{\rho})_{kj}$ and $\overline{(Y_{\rho})_{jk}}$.

\begin{table}%
\begin{center}
\begin{tabular}{ |c|c|m{7.5cm}|m{4.5cm}| } 
 \hline
 Ensemble & Dimension & Off-diagonal entries  & Diagonal entries \\ 
 \hline \hline
 GOE & $\R^{n \times n}$ & $\mathcal{N}(0, 1)$ & $\mathcal{N}(0, 2)$ \\ 
 GUE & $\C^{n \times n}$ & $\mathcal{N}(0, 1/2) + i \mathcal{N}(0, 1/2 )$ & $\mathcal{N}(0, 1)$ \\ 
 GSE & $\bb C^{2 n \times 2 n}$ &  $\begin{pmatrix}
    a + b i & c  + d i \\
    -c + d i & a - b i
\end{pmatrix}$  where $a, b, c, d \sim \mathcal{N}(0, 1/4)$ &  %
$\begin{pmatrix}
    a & 0 \\
   0 & a 
\end{pmatrix}$  where $a \sim \mathcal{N}(0, 1/2) $\\
 \hline
\end{tabular}
\end{center}
\caption{Summary of \Cref{def:goe} and \Cref{def:gse}}
\label{table:GOE-GUE-GSE}
\end{table}

For the signal prior, we assume that all coordinates of the signal are independent and identically distributed (iid). We fix group $G$ and choose the normalized Haar measure over $G$ as the coordinate prior distribution. As it was mentioned in \Cref{sec:representation_theory}, this measure is an analog of uniform distribution for the groups. In particular, in case of finite groups, this means that each element of the group is picked with equal probability.

We are now fully equipped to state the Gaussian synchronization model over a compact group. 
\begin{definition}[{{Adapted from~\cite[Definition~6.7]{perryOptimalitySuboptimalityPCA2016}}}] \label{def:GSM} Let $G$ be a compact group and let $\Psi$ be a set of all non-isomorphic irreducible representations of $G$ (excluding trivial representation and taking only one element from pair $\rho$ and its conjugate $\bar \rho$). 

The Gaussian synchronization model is defined as follows. %
Draw a vector $u \in G^n$ by sampling independently each coordinate from Haar (uniform) measure on $G$. %
For each irreducible representation $\rho \in \Psi$ we obtain a matrix observation depending on the type of representation as follows.
\begin{itemize}
    \item Suppose $\rho$ is of real or complex type. Denote by $d_\rho$ dimension of $\rho$. Let $X_\rho \in \mathbb{C}^{n d_\rho \times d_\rho}$ be a vector of representations $\rho(u_g)$ for all $g \in G$. We observe $n d_\rho \times n d_\rho$ matrix 
    \begin{equation}\label{eq:model}
    Y_\rho = \frac{ \lambda_\rho}{n} X_\rho X_\rho^* + \frac{1}{\sqrt{n d_\rho}} W_\rho,
    \end{equation}
    where the noise matrix $W_\rho$ is a GOE or GUE respectively. 
    \item Suppose $\rho$ is of quaternionic type. Let $d_\rho$ be dimension of $\rho$ over quaternionic field $\bb H$ (that equals half dimension over complex field). In this case, $X_\rho \in \mathbb{C}^{2n  d_\rho \times 2 d_\rho}$ and we observe $2n d_\rho \times 2n d_\rho$ matrix
    \begin{equation}\label{eq:model_quaternionic}
    Y_\rho = \frac{ \lambda_\rho}{n} X_\rho X_\rho^* + \frac{1}{\sqrt{n d_\rho}} W_\rho,
    \end{equation}
    where $W_\rho$ is GSE matrix. 
\end{itemize}
In all these cases, scalar $\lambda_\rho \in \mathbb{R}, \lambda_\rho\ge 0$ denotes the signal-to-noise ratio.
\end{definition}

We can now give a formal statement of the results for general finite groups. 
As a reminder, we show that the computational threshold in the low-degree sense remains the same as in single-frequency case despite having additional information about the signal. %

\begin{theorem}\label{thm:main_all_priors}%
Let $G$ be a finite group of size $L$ with $L > 2$ that does not depend on $n$.

Consider Gaussian synchronization model as in \Cref{def:GSM} and denote by $\lambda_{\text{max}}$ the maximum signal-to-noise ratio value among all the frequencies, $\lambda_{\text{max}} = \max_{\rho \in \Psi} \lambda_\rho$.

If $\lambda_{\text{max}} \le 1$, then for all $D = o(n^{1/3})$, it holds that $\| L^{\le D}_n\| = O(1)$.
\end{theorem}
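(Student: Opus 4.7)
The plan is to mirror the two-step strategy used for $\bb S$-synchronization: first reduce $\|L^{\le D}_n\|^2$ to a multinomial moment expression in the signal counts, then invoke combinatorial moment bounds. As a preliminary reduction, one checks (directly from the closed-form expression obtained below) that $\|L^{\le D}_n\|^2$ is a polynomial in the $\lambda_\rho^2$ with non-negative coefficients. It therefore suffices to establish the bound under $\lambda_\rho \equiv \lambda_{\max} \le 1$, which will be assumed from now on.

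Next I would apply the Gaussian-additive LDLR formula \Cref{lm:ldlr_gaussian_additive_model}, adapted to the three ensemble types (GOE, GUE, GSE) by splitting each complex or quaternionic observation into its real coordinates, exactly as was done for complex frequencies in the proof of \Cref{lm:ldlr_angular_cyclic_base_expression}; the resulting factors are collected into constants $\beta_\rho$ depending on the type of $\rho$. Stacking all signals $\tfrac{\lambda}{n}X_\rho X_\rho^*$ into a single real vector $\theta$ and computing $\Re\langle\theta,\theta'\rangle$ for two independent copies gives
\[
\|L^{\le D}_n\|^2 \;=\; \sum_{d=0}^{D}\frac{1}{d!}\,\E\Bigl(\frac{\lambda^2}{2n}\sum_{\rho\in\Psi}\beta_\rho\,\|\langle X_\rho,X_\rho'\rangle\|_F^{\,2}\Bigr)^{\!d}.
\]
By left-invariance of the Haar prior, $X_\rho'$ may be replaced by the constant $\rho(e)$, so that $\langle X_\rho,X_\rho'\rangle = \sum_{g\in G} n_g\,\rho(g)$, where $n_g \colonequals |\{i : u_i = g\}|$ is multinomial with $n$ trials and uniform cell probabilities $1/L$. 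Column orthogonality of the characters of $G$, together with pairing each complex-type $\rho \in \Psi$ with its excluded conjugate $\bar\rho$ (so that every non-trivial irreducible representation is effectively accounted for once), then collapses the representation-indexed sum to the scalar quadratic form
\[
\frac{L}{2}\sum_{g\in G}\Bigl(n_g - \tfrac{n}{L}\Bigr)^{\!2},
\]
recovering \emph{exactly} the expression already derived for $\bb Z_L$ in \Cref{lm:ldlr_counts_ng_cyclic}. This is the general-group analogue of that lemma and the step where all of the representation theory is consumed.

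The remaining task is then identical to the cyclic case sketched in \Cref{sec:sketch_combinatorial_proof}: show that
\[
\sum_{d=0}^{D}\frac{1}{d!}\Bigl(\frac{\lambda^2 L}{2n}\Bigr)^{\!d}\E\Bigl[\Bigl(\textstyle\sum_{g\in G}(n_g - n/L)^2\Bigr)^{\!d}\Bigr] \;=\; O(1)
\]
whenever $\lambda \le 1$ and $D = o(n^{1/3})$. This is where I expect the main technical obstacle to lie. A Gaussian approximation to the centered multinomial vector $(n_g - n/L)_g$ gives a leading contribution of order $(\lambda^2(L-1)/2)^d/d!$, which sums to $O(1)$ as soon as $\lambda^2(L-1) \le 2$; the real work is to control the non-Gaussian combinatorial corrections arising from higher-order coincidences of coordinate indices. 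I would expand the $d$-th power as a sum over $d$-tuples of indices and group contributions by the partition of their coincidence patterns, so that each additional coincidence beyond the dominant Gaussian pairing costs a factor of order $O(D^2/n)$. The hypothesis $D = o(n^{1/3})$ is exactly what is needed for the total correction to remain $o(1)$. Because this reduced expression makes no further use of the group structure, the combinatorial analysis can be imported verbatim from the $\bb Z_L$ case, completing the proof.
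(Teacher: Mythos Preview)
Your two-step structure matches the paper: reduce to the multinomial expression (the paper's \Cref{lm:ldlr_counts_ng}), then invoke the group-independent combinatorial analysis. A minor correction: your LDLR formula is missing the factor $d_\rho$; the paper's \Cref{prop:ldlr_basic_expression} has $\frac{\beta_\rho d_\rho}{2}\|X_\rho^* X_\rho'\|_F^2$, and this $d_\rho$ is exactly what makes the Peter--Weyl matrix-coefficient orthogonality (not just character orthogonality, which would be insufficient for non-abelian $G$) collapse the representation sum to the quadratic form in the counts $n_g$.

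More substantively, your Gaussian heuristic is off and obscures where the threshold $\lambda=1$ actually comes from. Under the Gaussian approximation one has $S_L \approx \tfrac{n}{2}\chi^2_{L-1}$, so that $\tfrac{\lambda^{2d}}{n^d d!}\E[S_L^d] \sim \lambda^{2d}\,d^{(L-3)/2}$: the $1/d!$ is cancelled by the $\chi^2$ moments, and the resulting series is a polylogarithm that converges precisely when $\lambda<1$. Your claimed leading term $(\lambda^2(L-1)/2)^d/d!$ would sum for every $\lambda$, and the condition $\lambda^2(L-1)\le 2$ plays no role. The paper's actual combinatorial argument (\Cref{lm:S_l_finite_group} through \Cref{lm:ldlr_final_finite_groups}) also does not expand over coincidence patterns as you sketch; it writes $S_L$ as a telescoping sum of squared centered linear combinations of the $n_\ell$ via an orthogonal decomposition of the quadratic form, then marginalizes the counts one at a time via the tower property, arriving at $\|L_n^{\le D}\|^2 \le \sum_d \lambda^{2d} d^{2L}$. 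Since you correctly note that this step is group-independent and may be imported wholesale, your proposal ultimately goes through, but the intuition you supply would not lead one to the argument the paper actually carries out.
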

\begin{remark}
    The case $L = 2$ can be treated using the same proof technique with slight technical adjustments. We omit the proof for this case here because it corresponds to $\bb Z_2$-synchronization, where the statistical and computational thresholds are known to coincide with the spectral one \cite{Deshpande}.
\end{remark}

Here the phase transition is sharp due to BBP-transition as in the angular synchronization. Therefore, we will proceed with the assumption that all $\lambda_\rho$ values are identical as otherwise we can set all values to the largest one. The sharpness also implies that the same threshold applies to a model with only a partial list of frequencies.

\subsection{Alternative view on the multi-frequency model}\label{sec:noisy_indicator}

Denote by $g^* = (g_1^\ast, \dots, g_n^\ast)$ the true signal. 

We assume that for each pair $(k, j)$ we obtain a noisy indicator $z_{kj} : G \to \C $ of the true value of $g_k^\ast (g_j^\ast)^{-1}$, i.e., 
$$
z_{kj}(g) = \gamma  \bm 1\{g=g_k^\ast (g_j^\ast)^{-1}\} + w_{kj}(g),
$$
where $w_{kj}(g)\sim\C\mathcal{N}(0,1)$ are independent for $k, j, $ and $g \in G$. Here we assume the complex standard Gaussian variables, however, it is equivalent to the model with the real Gaussian variables with the scaled signal-to-noise ratio $\gamma_{\text{real}} = \gamma \sqrt{2}$. 

We will show that this model is equivalent to the multi-frequency model over all irreducible representations~\eqref{eq:def_mfreq_groups_intro} with $\gamma =  \lambda \sqrt{\frac{L}{n}}$.

For each pair $(k, j)$ consider an $L\times L $ matrix whose rows and columns are indexed by group elements 
$$
\tilde{Y}_{kj}(t, s) = z_{kj}(t s^{-1}) =  \gamma \bm 1\{g s^{-1}=g_k^\ast (g_j^\ast)^{-1}\} + w_{kj}(t s^{-1}).
$$

Consider $\rho_{\text{reg}}$ the left regular representation of $G$ defined by $\rho_{\text{reg}}(t) e_s = e_{ts}$ for $t, s \in G$ and extending linearly. In particular, the definition implies that matrix elements of $\rho_{\text{reg}}$ indexed by group elements are
$$
[\rho_{\text{reg}}(h)]_{t, s} = \begin{cases}
    0,\quad ts^{-1} \ne h \\
    1,\quad ts^{-1} = h 
\end{cases} = \bm 1(ts^{-1} = h).
$$
Therefore, we can express matrix $Y_{kj}$ as
$$
\tilde{Y}_{kj}= \gamma \rho_{\text{reg}}(g_k^\ast (g_j^\ast)^{-1}) + \sum_g w_{kj} (g) \rho_{\text{reg}} (g).
$$

We can rewrite the noise term as the Gaussian Cayley matrix which is given by 
$$
W^{\text{Cayley}} \colonequals \sum_{g} w(g) \rho_{\text{reg}}(g),
$$
where $w(g)$ are independent Gaussian random variables. Combining everything together implies
$$
\tilde{Y}_{kj}= \gamma \rho_{\text{reg}}(g_k^\ast (g_j^\ast)^{-1}) + W^{\text{Cayley}}_{kj},
$$
where $W^{\text{Cayley}}_{kj}$ are independent Gaussian Cayley matrices for $k, j = 1, \dots, n$. 

By Peter-Weyl theorem, $\rho_{\text{reg}}$ decomposes into the direct sum of the irregular representations, which we denote here by $\rho_1, \dots, \rho_K$. Hence, there exists a deterministic unitary matrix $U\in\C^{L\times L}$ such that 
$$
U \tilde{Y}_{kj} U^\ast = \gamma \begin{pmatrix}
    \rho_1(g_k^\ast (g_j^\ast)^{-1}) & & \\
    & \ddots & \\
    & & \rho_K(g_k^\ast (g_j^\ast)^{-1}) 
\end{pmatrix} + \sum_g w_{kj}(g) \begin{pmatrix}
    \rho_1(g) & & \\
    & \ddots & \\
    & & \rho_K(g) 
\end{pmatrix} .
$$
By \cite[Lemma7]{bandeira2022concentration}, the last term has the same distribution as the random block-diagonalized matrix with blocks 
$$
    \sqrt{\frac{L}{d_{\rho_1}}}W_{kj}^{\rho_1}, \dots ,  \sqrt{\frac{L}{d_{\rho_K}}}W_{kj}^{\rho_K},
$$
where $d_{\rho}$ is the dimension of representation $\rho$ and $W_{kj}^\rho$ is a random matrix with independent Gaussian entries. The matrices are independent for each $k, j$, and $\rho$.

The off-diagonal blocks do not carry information on the signal and we can discard them. We arrive at the canonical model where $(k, j)$ observation is $d_{\rho}\times d_{\rho}$ matrix 
$$
Y^{\rho}_{kj} = \frac{\lambda}{n} \rho(g_k^\ast) \rho\lt((g_j^\ast)^{-1}\rt) + \frac{1}{\sqrt{nd_{\rho}}} W_{ij}^{\rho},
$$
for $\gamma = \lambda \sqrt{\frac{L}{n}}$. This model coincides with the one defined in \Cref{def:GSM}.

\section{Synchronization over finite groups: proofs}

\paragraph{Proof outline} The first step of the proof is to rewrite the LDLR in terms of integer random variables $n_g$ that correspond to counts of realizations of a group element $g$ in a signal vector $u \in G^n$. The final expression is given in \Cref{lm:ldlr_counts_ng} and we devote \Cref{ssec:ldlr_counts_ng} to its derivation. %

Subsequently, we recursively eliminate one variable at a time by computing conditional expectation. The recursion step is given in \Cref{lm:expectation_T_step_k_main} with its more precise variant provided in \Cref{lm:T_recurrent}. This is the most technical step and we defer the proof to \Cref{asec:proof_finite}. We outline the main steps in \Cref{sec:combinatorics_proof_finite}, and we demonstrate the detailed proof for the special case of a cyclic group of size 3 in \Cref{sec:finite_3_group}.  

\subsection{Multinomial expression of $\|L_n^{\le D}\|^2$ for general $G$}\label{ssec:ldlr_counts_ng}

The main result of this section is \Cref{lm:ldlr_counts_ng} which provides $\|L_n^{\le D}\|^2$ expression in terms of integer counts which we define below. The final expression does not contain any particularities of the representation types of the model, however, in the intermediate steps we treat each representation type slightly different. While the proof for the real and complex case can be easily followed side by side, the quaternionic case contains minor technical differences that hurdles readability. 
The ideas for the main steps for all three types can be grasped from the proof provided in the main part, and for the details on the quaternionic case we refer the curious reader to~\Cref{asec:quaternionic}. 

\begin{proposition}\label{prop:ldlr_basic_expression}
Let $G$ be a finite group and denote by $\pi$ the signal prior distribution, i.e., the normalized Haar measure over $G$. Let $X, X^\prime \sim \pi$. 

Suppose that all frequencies $\rho$ are either of real or complex type. Set $\beta_\rho =1$ if $\rho$ is of real type and $\beta_\rho = 2$ if $\rho$ is of complex type. %

We have
\begin{equation*}%
\| L^{\le D}_n \|^2 = \sum_{d=0}^D \frac{1}{d!}  \E_{X, X^\prime} \Big ( \frac{\lambda^2}{n} \sum_{\rho} \frac{\beta_\rho d_\rho}{2} \| X_{\rho}^* X^\prime_{\rho} \|^2_F \Big)^d .
\end{equation*}
Equivalently, we can write
\begin{equation}\label{eq:ldlr_main}
    \| L^{\le D}_n \|^2 = \sum_{d=0}^D \frac{1}{d!}  \E_{X} \Big ( \frac{\lambda^{2}}{n} \sum_{\rho} \frac{ \beta_\rho d_\rho}{2} \| X_{\rho} I_{\rho, n} \|^2_F \Big)^d,
\end{equation}
where $I_{\rho, n} \in \bb C^{n d_\rho \times d_\rho}$ is a matrix constructed by stacking vertically $n$ identity matrices of dimension $d_\rho$. 
\end{proposition}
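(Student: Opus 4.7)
The plan is to cast the full multi-frequency observation $(Y_\rho)_{\rho \in \Psi}$ as a single Gaussian additive model and then invoke \Cref{lm:ldlr_gaussian_additive_model} blockwise across representations, with the parameter $\beta$ in that lemma set to $\beta_\rho$ in each block. The $(Y_\rho)_\rho$ are independent because the $W_\rho$ are, so we can treat each representation as a separate additive block and afterwards take the inner product of the stacked signals.

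The first step is to reduce each Hermitian-noise block to an iid-noise block, exactly as in the proof of \Cref{lm:ldlr_angular_cyclic_base_expression}: the Hermitian model $Y_\rho = \frac{\lambda}{n} X_\rho X_\rho^\ast + \frac{1}{\sqrt{n d_\rho}} W_\rho$ is equivalent to an asymmetric model $\tilde Y_\rho = \frac{\lambda}{n} X_\rho X_\rho^\ast + \sqrt{\frac{2}{n d_\rho}} \tilde W_\rho$, with $\tilde W_\rho$ having iid real or complex Gaussian entries according to the type of $\rho$ (the doubling of the noise variance compensates for observing each off-diagonal entry twice). After normalizing the noise to unit variance as required by \Cref{lm:ldlr_gaussian_additive_model}, the block-$\rho$ signal becomes $\theta_\rho = \frac{\lambda \sqrt{d_\rho}}{\sqrt{2n}} X_\rho X_\rho^\ast$.

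The key computation is then, using the Hermiticity of $\theta_\rho$ and the cyclic property of the trace,
\[
\langle \theta_\rho, \theta'_\rho \rangle \;=\; \tr(\theta_\rho \theta'_\rho) \;=\; \frac{\lambda^2 d_\rho}{2n} \tr\lt( X_\rho X_\rho^\ast X'_\rho (X'_\rho)^\ast \rt) \;=\; \frac{\lambda^2 d_\rho}{2n} \| X_\rho^\ast X'_\rho \|_F^2.
\]
Plugging this into \Cref{lm:ldlr_gaussian_additive_model} with $\beta = \beta_\rho$ in each block (so real-type $\rho$ contribute with factor $1$ and complex-type $\rho$ with factor $2$, which is cleanest to verify by splitting the complex blocks into real and imaginary parts exactly as in the proof of that lemma) and summing over $\rho \in \Psi$ yields $\sum_\rho \beta_\rho \Re \langle \theta_\rho, \theta'_\rho \rangle = \frac{\lambda^2}{n} \sum_\rho \frac{\beta_\rho d_\rho}{2} \| X_\rho^\ast X'_\rho \|_F^2$, which gives the first expression of the proposition.

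For the equivalent form involving $I_{\rho, n}$ I would invoke the left-invariance of the Haar measure. Since $X_\rho^\ast X'_\rho = \sum_{i=1}^n \rho(u_i^{-1} u'_i)$ and $(u_i^{-1} u'_i)_{i=1}^n$ are iid Haar on $G$ whenever $u, u'$ are, the joint law of $\bigl(\| X_\rho^\ast X'_\rho \|_F^2\bigr)_{\rho \in \Psi}$ agrees with the joint law of $\bigl(\| X_\rho^\ast I_{\rho, n} \|_F^2\bigr)_{\rho \in \Psi}$, where $X_\rho^\ast I_{\rho, n} = \sum_i \rho(u_i)^\ast$; this removes the second signal copy from the expectation. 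The main bookkeeping obstacle will be tracking the two noise-variance scalings (the Hermitian-to-asymmetric doubling and the subsequent unit-variance normalization) together with the $d_\rho$ factor from the $\frac{1}{\sqrt{n d_\rho}}$ noise, and correctly combining real-type and complex-type blocks through their $\beta_\rho$'s. The quaternionic case is excluded from this proposition and genuinely requires a separate argument, because the GSE structure couples the $2\times 2$ quaternionic blocks and the relevant real part of the inner product must be extracted from the trace differently.
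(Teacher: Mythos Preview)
Your proposal is correct and follows essentially the same approach as the paper's proof: reduce each Hermitian-noise block to an asymmetric iid-noise block, identify the per-representation signal as $\theta_\rho = \frac{\lambda\sqrt{d_\rho}}{\sqrt{2n}} X_\rho X_\rho^\ast$, compute the inner product via the trace to obtain $\frac{\lambda^2 d_\rho}{2n}\|X_\rho^\ast X'_\rho\|_F^2$, combine blocks through the $\beta_\rho$ weights by splitting complex blocks into real and imaginary parts, and finally pass to the one-copy expression via Haar invariance. Your bookkeeping of the scaling constants is in fact slightly cleaner than the paper's (which has a small typo in the coefficient of $\theta_\rho$), and your justification of the $I_{\rho,n}$ reduction via the law of $(u_i^{-1}u'_i)_i$ makes explicit what the paper only states as ``invariance under left and right translation.''
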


To prove this proposition we will again apply \Cref{lm:ldlr_gaussian_additive_model} on LDLR for the Gaussian additive model defined over $\R $ or $\C $ (for quaternionic case, see \Cref{lm:ldlr_gaussian_additive_model_quaternionic} and \Cref{prop:ldlr_basic_expression_quaternionic}).
\begin{proof}%
Similarly to the proof for angular synchronization, we rewrite the model using asymmetric noise matrix and adjust the variance as following:
$$
\tilde{Y}_\rho = \frac{\lambda}{n} X_\rho X_\rho^* + \sqrt{\frac{2}{n d_\rho}} \tilde{W}_\rho.
$$
Here $\tilde{W}_\rho$ is a random matrix whose entries are independent Gaussian random variables of the respective type. %

In the notation of \Cref{lm:ldlr_gaussian_additive_model}, the signal $\theta$ consists of concatenated flattened matrices 
$$
\frac{\lambda d_\rho}{\sqrt{2 n}} X_\rho X_\rho^*  \in \C^{n d_\rho \times n d_\rho}.
$$
We denote each flattened vector corresponding to representation $\rho$ by $\theta_\rho$. 
We stack signal components over different representations which may be of different type. We incorporate this construction the same way as in the proof of~\Cref{lm:ldlr_angular_cyclic_base_expression}, i.e., by decomposing the observation into real-valued observations over separate components. It is easy to see that we obtain a sum over different types of representations with the corresponding coefficient $\beta$ in place of the inner product. 

It now remains to compute the expression inside $d$-th exponent. Denote by $X_\rho$ and $X_\rho^\prime$ the signal copies corresponding to $\theta_\rho$ and $\theta_\rho^\prime$. We have
\begin{align*}
\Re \sum_\rho \beta_\rho \langle \theta_\rho, \theta_\rho^\prime  \rangle &= \Re \lt( \sum_\rho  \frac{\lambda^2 d_\rho \beta_\rho }{2 n }\langle X_\rho X_\rho^*, X_\rho^\prime (X_\rho^\prime)^* \rangle_F \rt) \\
&= \Re \lt( \sum_\rho  \frac{\lambda^2 d_\rho \beta_\rho }{2 n }\tr ((X_\rho^\prime)^* X_\rho X_\rho^* X_\rho^\prime )  \rt) \\
&= \sum_\rho \frac{\lambda^2 d_\rho \beta_\rho  }{2 n }\|X_\rho^* X_\rho^\prime \|^2_F.
\end{align*}

Finally,  we replace $\|X_\rho^* X_\rho^\prime \|^2_F$ by $\| X_{\rho} I_{\rho, n} \|^2_F$ due to its invariance under left and right translation by a group element. We thus rewrite the LDLR as
\begin{align*}
\| L^{\le D}_n \|^2 &= \sum_{d=0}^D \frac{1}{d!}  \E_{X} \Big ( \frac{\lambda^{2}}{n} \sum_{\rho} \frac{ \beta_\rho d_\rho}{2} \| X_{\rho} I_{\rho, n} \|^2_F \Big)^d,
\end{align*}
giving the result.
\end{proof}

We now further rewrite the ratio using integer random variables which count how many times every element $g \in G$ was sampled in the signal vector $X$. More specifically, we will use the counts $n_g = |\{j: X_j = g  \}|\in \bb N$, where $g$ is an element of $G$. Note that since the group is finite, we have $L$ such random variables. Moreover, similarly to the cyclic group, these variables are distributed according to the multinomial distribution with probabilities $1/L$ and number of trials $n$. The following lemma is a generalized version of \Cref{lm:ldlr_counts_ng_cyclic} for general finite group. %
\begin{lemma}\label{lm:ldlr_counts_ng} Suppose that $G$ is finite and let $n_g \colonequals n_g(X) = |\{j: X_j = g  \}|\in \bb N$ for every $g \in G$. Denote $L \colonequals |G|$. The second moment of the LDLR is
\begin{align}\label{eq:ldlr_counts_ng}
\| L^{\le D}_n \|^2 &= \sum_{d=0}^D \frac{1}{d!}  \frac{\lambda^{2d}}{n^d} \E_{X} \Big (  \frac{L-1}{2}\sum_{g \in G} n_g^2 - \frac{1}{2} \sum_{\substack{g, f \in G\\ g\ne f}}n_g n_f \Big)^d .
\end{align}
\end{lemma}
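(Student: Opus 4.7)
The plan is to start from the expression for $\|L^{\le D}_n\|^2$ given in Proposition~\ref{prop:ldlr_basic_expression} and to reduce the quantity
$$S(u) \;\colonequals\; \sum_{\rho \in \Psi} \frac{\beta_\rho d_\rho}{2}\,\bigl\| X_\rho^{*}\, I_{\rho,n}\bigr\|_F^2$$
to an explicit function of the multiplicities $\{n_g\}_{g\in G}$ via character orthogonality on $G$. The first step is a direct computation: grouping the coordinates of $u$ by their common value, $X_\rho^{*}\,I_{\rho,n}=\sum_{j=1}^n \rho(u_j^{-1})=\sum_{g\in G} n_g\,\rho(g^{-1})$, so that unitarity of $\rho$ gives
$$\bigl\|X_\rho^{*}\,I_{\rho,n}\bigr\|_F^2 \;=\; \sum_{g,f \in G} n_g\, n_f\,\tr\!\bigl(\rho(g^{-1})\rho(f^{-1})^{*}\bigr) \;=\; \sum_{g,f \in G} n_g\, n_f\,\chi_\rho(g^{-1}f),$$
where $\chi_\rho \colonequals \tr \rho$ is the character. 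Using $\chi_\rho(h^{-1})=\overline{\chi_\rho(h)}$ together with the swap $(g,f)\mapsto(f,g)$ confirms this quantity is real, as it must be.

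The second step re-expresses the weighted sum $\sum_{\rho\in\Psi}\beta_\rho d_\rho(\cdot)$ as an \emph{unweighted} sum over every non-trivial irreducible representation of $G$. For $\rho$ of real type, $\bar\rho\cong\rho$ is counted once in $\Psi$ with $\beta_\rho=1$. For $\rho$ of complex type, $\rho$ and $\bar\rho$ are distinct irreducibles of the same dimension, but $\Psi$ contains only one representative; since $\|X_{\bar\rho}^{*}I_{\bar\rho,n}\|_F^2=\|X_\rho^{*}I_{\rho,n}\|_F^2$, the coefficient $\beta_\rho=2$ exactly accounts for the missing conjugate. Writing $\widehat G$ for the set of all irreducible representations of $G$, this identifies
$$\sum_{\rho\in\Psi}\beta_\rho d_\rho\,\bigl\|X_\rho^{*}\,I_{\rho,n}\bigr\|_F^2 \;=\; \sum_{g,f\in G}n_g\, n_f\sum_{\rho\in\widehat G\setminus\{\mathbf 1\}} d_\rho\,\chi_\rho(g^{-1}f).$$

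The final step is to apply the identity $\sum_{\rho\in\widehat G}d_\rho\,\chi_\rho(h)=L\,\bm 1\{h=e\}$, which is the character of the left regular representation of $G$. This collapses the inner sum, with the trivial representation restored, to $L\sum_g n_g^2$; subtracting the trivial-representation contribution $\bigl(\sum_g n_g\bigr)^2=n^2$ and expanding $n^2=\sum_g n_g^2+\sum_{g\ne f}n_g n_f$ yields $S(u)=\frac{L-1}{2}\sum_g n_g^2-\frac{1}{2}\sum_{g\ne f}n_g n_f$. Inserting this into \eqref{eq:ldlr_main} gives the claim. The only delicate point is the coefficient bookkeeping in the second step, i.e.\ matching the $\beta_\rho$'s to the orbit structure of $\Psi$ under complex conjugation (and verifying the analogous identification in the quaternionic case deferred to the appendix); everything else reduces to a routine application of Peter--Weyl / character orthogonality once this matching is in place.
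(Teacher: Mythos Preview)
Your proof is correct and follows the same overall arc as the paper's: start from Proposition~\ref{prop:ldlr_basic_expression}, complete the sum over $\Psi$ (with the $\beta_\rho$'s absorbing the missing conjugates) to a sum over all nontrivial irreducibles, then use Peter--Weyl orthogonality to collapse everything to $\tfrac{L}{2}\sum_g n_g^2 - \tfrac12 n^2$.

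The only noteworthy difference is the packaging of the orthogonality step. The paper expands $\|\sum_g n_g\rho(g)\|_F^2$ entrywise, interprets each $\sum_g n_g\sqrt{d_\rho}\,\rho(g)_{ij}$ as an inner product in $L^2(G)$ against a Peter--Weyl basis vector, and then invokes Parseval to obtain $\sum_{\rho}\sum_{i,j}|\cdot|^2 = L\sum_g n_g^2$. You instead take traces first, write $\|X_\rho^* I_{\rho,n}\|_F^2 = \sum_{g,f} n_g n_f\,\chi_\rho(g^{-1}f)$, and appeal to the column orthogonality relation $\sum_{\rho\in\widehat G} d_\rho\,\chi_\rho(h)=L\,\bm 1\{h=e\}$. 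The two are of course equivalent, but your route is slightly more economical: it avoids tracking individual matrix entries and reduces the computation to a single character identity, at the cost of being marginally less explicit about why the $\beta_\rho$ bookkeeping works out in the quaternionic case (which both you and the paper defer).
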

\begin{proof}
In this proof, we assume frequencies are either real or complex; however, the formulation also applies to cases with quaternionic representations. For the latter, see the proof in \Cref{asec:quaternionic}.

Recall the notation $\rho(g)\in \C^{d_\rho \times d_\rho}$ for a complex-valued representation of a group element $g\in G$. Using this notation and expression \eqref{eq:ldlr_main}, we can write the LDLR as 
   \begin{align}
\| L^{\le D}_n \|^2 &= \sum_{d=0}^D \frac{1}{d!}  \E_{X} \Big ( \frac{\lambda^{2}}{n} \sum_{\rho} \frac{ \beta_\rho d_\rho}{2}  \Bigl\| \sum_{g \in G} n_g \rho(g)\Bigr\|^2_F \Big)^d \nonumber \\
&= \sum_{d=0}^D \frac{1}{d!}  \E_{X} \Big ( \frac{\lambda^{2}}{n} \sum_{\rho} \frac{ \beta_\rho }{2}  \sum_{i,j=1}^{d_\rho} \Bigl|\sum_{g \in G}n_g \sqrt{ d_\rho}\rho(g)_{ij}\Bigr|^2 \Big)^d.\label{eq:ldlr_PW_basis_expansion}
\end{align}

By the Peter-Weyl theorem on the orthogonality of matrix coefficients, the set of basis functions $ \{g \mapsto\sqrt{d_{\rho}} \rho(g)_{ij}\}_{\rho, i, j}$ forms an orthogonal basis with respect to Hermitian inner product on $L^2(G)$. Recall that for a finite group of size $L$, the inner product between $f(g) $ and $h(g)$ can be expressed as 
$$
\ip{f, g}  = \frac{1}{L}\sum_g f(g) \bar h(g).
$$%
Thus, for fixed $\rho, i, j$, we can view $\Bigl|\frac{1}{L}\sum_{g \in G}n_g \sqrt{d_\rho}\rho(g)_{ij}\Bigr|^2$ as the squared inner product between vector with elements $n_g$ and the basis function $ \{g \mapsto\sqrt{d_{\rho}} \rho(g)_{ij}\}_{\rho, i, j}$ in $L^2(G)$ sense. Due to basis invariance of $\ell_2$-norm, we have
$$
\frac{1}{L^2} \sum_{\rho \in \Psi_{\text{all}}} \sum_{i, j = 1}^{d_\rho} \lt |\sum_{g \in G}n_g \sqrt{d_\rho}\rho(g)_{ij}  \rt |^2 = \|(n_g)_{g\in G} \|^2_{\ell_2} = \frac{1}{L} \sum_{g \in G} n_g^2, 
$$
where $ \Psi_{\text{all}}$ denotes the complete list of irreducible representations including trivial representation. 

Observe that the summation in \eqref{eq:ldlr_PW_basis_expansion} is taken over $\Psi$ which excludes the trivial representation and considers only one complex representation from each conjugate pair. The latter allows us to remove coefficient $\beta_\rho = 2$ for complex representations. By adding and substracting also a trivial representation, we complete the list to the full list~$\Psi_{\text{all}}$.

Combining everything together, we have
\begin{align*}
\sum_{\rho} \frac{\beta_\rho d_\rho}{2}  \sum_{ij} \Bigl|\sum_{g \in G}n_g\rho(g)_{ij}\Bigr|^2 &= \frac{L^2}{2} \lt(\frac{1}{L} \sum_{g\in G} n_g^2 - \lt(\frac{1}{L} \sum_{g \in G} n_g\rt)^2 \rt)\\
&= \frac{L-1}{2}\sum_{g \in G} n_g^2 - \frac{1}{2} \sum_{\substack{g, f \in G\\ g\ne f}}n_g n_f.
\end{align*}
In the first equality we substracted a term corresponding to the trivial representation. Substituting the above expression back to \eqref{eq:ldlr_PW_basis_expansion}, we obtain the desired expression. 
\end{proof}
Since the group is finite, we can enumerate all group elements in arbitrary order and refer to counts $n_g$ with an integer subscript $n_\ell$, $\ell = 0, \dots, L-1$.  Since the signal prior is the normalized Haar measure over $G$, the counts $n_0, \dots, n_{L-1}$ follow multinomial distribution with parameters $n$ and probabilities $1/ L$. Therefore, we can equivalently write \eqref{eq:ldlr_counts_ng} as 
\begin{equation}\label{eq:ldlr_counts_nL}
    \| L^{\le D}_n \|^2 = \sum_{d=0}^D \frac{1}{d!}  \frac{\lambda^{2d}}{n^d} \E \Big (  \frac{L-1}{2}\sum_{\ell = 0}^{L-1} n_\ell^2 - \frac{1}{2} \sum_{\substack{\ell, k = 0\\ \ell\ne k}}^{L-1} n_\ell n_k \Big)^d ,
\end{equation}
where the expectation is taken with respect to randomness of $n_\ell$.

\subsection{Combinatorial analysis}\label{sec:combinatorics_proof_finite}

In this section, we outline the main steps of the argument. This section is structured as follows.

We first decompose the quantity inside the expectation into a product of certain random variables that depend only on a subset of random counts $n_1, \dots, n_k$. This decomposition is given in \Cref{lm:S_l_finite_group}. To compute the expectation, we iteratively apply the tower property with respect to only one variable. We provide the recursion step in \Cref{lm:expectation_T_step_k_main}. After simplifying the expression, we arrive at the final bound on LDLR moment which depends only on $\lambda, L$, and $d$ in \Cref{lm:ldlr_final_finite_groups}. Together with \Cref{prop:polylogarithm-function}, this allows us to finish the proof of \Cref{thm:main_all_priors}.

We defer the detailed proofs to \Cref{asec:proof_finite} due to their technicality. For illustration of the proof technique, we provide the proof for the special case, $L=3$, in \Cref{sec:finite_3_group}. 
Recall that we define $S_L$ as
$$
S_L = \frac{L-1}{2}\sum_{\ell=0}^{L-1} n_\ell^2 - \sum_{\substack{j, k = 0 \\ j \ne k}}^{L-1} n_j n_k. 
 $$
\begin{remark}
    Up to scaling, $S_L$ is $\sum_{\ell = 0}^{L - 1} (n_{\ell} - \frac{1}{L}n)^2$.
    This random variable is well-known in statistics as having the distribution of a \emph{Pearson $\chi^2$ statistic} (sharing a name with the $\chi^2$ distribution that is its limit if $L$ is fixed and $n \to \infty$), and our analysis below may be viewed as a detailed non-asymptotic study of its moments of high order $d = d(n) = \omega(1)$.
    Relatedly, while it is unclear to us whether the constraint $d = o(n^{1/3})$ in our results is optimal, it seems natural that some such threshold of polynomial scaling should appear in our bounds, for example by comparison with the sharp bounds on moments of the binomial distribution (while our case involves the multinomial distribution) of \cite{Ahle-2022-BoundsRawMomentsBinomialPoisson}, which exhibit a natural transition at $d \sim n^{1/2}$.
\end{remark}
 
The idea of the first step is to split the sum into the terms such that first $n - k$ terms do not contain variable $n_k$. This decomposition is provided in the following lemma. 
\begin{lemma}\label{lm:S_l_finite_group} For all $d \ge 0$, we have
    \begin{equation}\label{eq:S_l_finite_group}
    S_L^d = \lt ( \frac{L}{2}\rt)^d \sum_{\substack{d_1, \dots, d_{L-1} \ge 0 \\ d_1 + \dots +d_{L-1} = d}} \binom{d}{d_1 \dots d_{L-1}} \prod_{\ell = 1}^{L-1} \lt ( \frac{L - \ell + 1}{
L - \ell } \rt) ^{d_\ell}  \lt ( \frac{n - \sum\limits_{j=1}^{\ell-1} n_{j} }{L - \ell + 1} - n_{\ell} \rt)^{2d_{\ell}}.
    \end{equation}
\end{lemma}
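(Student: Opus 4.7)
The plan is to reduce the problem to a single algebraic identity for $S_L$, after which the claim follows by a direct application of the multinomial theorem. Specifically, my target identity is
\begin{equation*}
S_L = \frac{L}{2}\sum_{\ell=1}^{L-1}\frac{L-\ell+1}{L-\ell}\,Y_\ell^2, \qquad Y_\ell := n_\ell - \frac{n - \sum_{j=1}^{\ell-1}n_j}{L-\ell+1}.
\end{equation*}
Once this is in place, expanding $S_L^d = (L/2)^d \bigl(\sum_\ell a_\ell\bigr)^d$ with $a_\ell = \frac{L-\ell+1}{L-\ell} Y_\ell^2$ via the multinomial theorem immediately yields equation~\eqref{eq:S_l_finite_group}, since $Y_\ell^{2d_\ell} = \bigl(\tfrac{n-\sum_{j<\ell}n_j}{L-\ell+1} - n_\ell\bigr)^{2d_\ell}$.

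The first preparatory step is to use $\sum_{\ell=0}^{L-1}n_\ell = n$ to rewrite
\begin{equation*}
S_L = \frac{L-1}{2}\sum_\ell n_\ell^2 - \frac{1}{2}\sum_{\ell\neq k}n_\ell n_k = \frac{L}{2}\sum_{\ell=0}^{L-1}n_\ell^2 - \frac{1}{2}\Bigl(\sum_\ell n_\ell\Bigr)^2 = \frac{L}{2}\sum_{\ell=0}^{L-1}\Bigl(n_\ell - \frac{n}{L}\Bigr)^2,
\end{equation*}
so that up to the factor $L/2$ the quantity $S_L$ is the sum of squared deviations of $(n_0,\dots,n_{L-1})$ from their uniform mean. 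The target identity then reduces to a purely algebraic ``telescoping sum of squares'' decomposition of $\sum_\ell (n_\ell - n/L)^2$.

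I will prove this decomposition by induction on $L$. The base case $L=2$ is immediate: both sides equal $\tfrac{(n_0-n_1)^2}{2}$. For the inductive step, I peel off the $\ell = 1$ summand $\frac{L}{L-1}(n_1 - n/L)^2$ on the right-hand side, and apply the inductive hypothesis to the $L-1$ variables $(n_0, n_2, \dots, n_{L-1})$ with total $n' := n - n_1$; after the reindexing $\ell \mapsto \ell - 1$, the remaining $\ell = 2, \dots, L-1$ terms match exactly $\sum_{\ell \neq 1}\bigl(n_\ell - (n-n_1)/(L-1)\bigr)^2$. Closing the induction thus comes down to verifying
\begin{equation*}
\sum_{\ell=0}^{L-1}\Bigl(n_\ell - \tfrac{n}{L}\Bigr)^2 - \frac{L}{L-1}\Bigl(n_1 - \tfrac{n}{L}\Bigr)^2 = \sum_{\ell\neq 1}\Bigl(n_\ell - \tfrac{n-n_1}{L-1}\Bigr)^2,
\end{equation*}
which is a direct expansion after noting that the cross term $2\bigl(n/L - (n-n_1)/(L-1)\bigr)\sum_{\ell \neq 1}(n_\ell - n/L)$ simplifies cleanly using $\sum_{\ell \neq 1}(n_\ell - n/L) = n/L - n_1$ and $n/L - (n-n_1)/(L-1) = (Ln_1 - n)/(L(L-1))$.

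The only real technical step is the single-line identity above; everything else is either unpacking definitions or applying the multinomial theorem. An equivalent, possibly more conceptual, route is to observe that the vectors $v_\ell \in \mathbb{R}^L$ with entries $(v_\ell)_\ell = \tfrac{L-\ell}{L-\ell+1}$, $(v_\ell)_j = -\tfrac{1}{L-\ell+1}$ for $j \in \{0, \ell+1, \dots, L-1\}$, and $(v_\ell)_j = 0$ for $j \in \{1, \dots, \ell-1\}$ form an orthogonal basis of the hyperplane $\{\sum x_j = 0\}$ with $\|v_\ell\|^2 = (L-\ell)/(L-\ell+1)$ and $\langle \tilde n, v_\ell\rangle = Y_\ell$, where $\tilde n_\ell = n_\ell - n/L$; Parseval then gives $\sum_\ell \tilde n_\ell^2 = \sum_\ell Y_\ell^2/\|v_\ell\|^2$, which is the desired identity.
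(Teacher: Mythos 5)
Your proof is correct and follows essentially the same route as the paper: both reduce $S_L$ to $\frac{L}{2}\sum_{\ell}(n_\ell - n/L)^2$ and then expand the identical orthogonal sum-of-squares (Helmert-type) decomposition with the multinomial theorem. The only difference is that you actually verify the key decomposition (by induction on $L$, or equivalently via the explicit orthogonal basis of $\{\sum_j x_j = 0\}$), whereas the paper asserts it follows from the SVD of the quadratic form $\frac{L}{2}I - \frac{1}{2}\bm 1_L\bm 1_L^{\top}$ and ``straightforward computations.''
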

\begin{proof}%
    We can view this expression as a bilinear form with input vector $(n_0, \dots, n_{L-1})^{\top}$. We can succinctly write
\begin{equation}\label{eq:bilinear}
S_L = (n_0, \dots, n_{L-1})^{\top} \lt(\frac{L}{2} I_{L\times L} - \frac{1}{2} \bm 1_L \bm 1_L^{\top}\rt) (n_0, \dots, n_{L-1}),
\end{equation}
where $I_{L\times L}$ is the identity matrix of size $L$ and $\bm 1_L$ is $L$-dimensional all-ones vector. 

By taking SVD of the bilinear form in \eqref{eq:bilinear} we can obtain the desired decomposition. It can also be verified through straightforward computations of the coefficients associated with quadratic and cross terms. We obtain
\begin{align*}
    S_L &= \frac{L}{2} \bigg ( 2 \Big (\frac{n_0 - n_{L-1}}{2} \Big)^2 + \frac{3}{2} \Big (\frac{n_0 + n_{L-1}}{3} - \frac{2}{3} n_{L - 2} \Big)^2 \\ 
    &\quad+ \dots  + \frac{j}{j - 1} \Big (\frac{n_0 + n_{L-1} + \dots + n_{L - j + 1}}{j} - \frac{j - 1}{j} n_{L-j}\Big)^2 \\ 
    &\quad+ \dots  + \frac{L}{L-1} \Big (\frac{n_0 + n_{L-1} + \dots + n_2}{L} - \frac{L-1}{L}n_1 \Big)^2 \bigg).
\end{align*}

We further rewrite the expression using the fact that $n_0, \dots, n_{L-1}$ sum to $n$, which is deterministic. This allows us to eliminate random variable $n_0$ from the expression. 
\begin{align*}
    S_L &= \frac{L}{2} \bigg ( 2 \Big (\frac{n - n_1 - \dots - n_{L-2}}{2} - n_{L-1} \Big)^2 \\&\quad  + \dots 
    + \frac{j}{j - 1} \Big (\frac{n - n_{1} - \dots - n_{L - j - 1}}{j} - n_{L-j}\Big)^2 \\&\quad + \dots
     + \frac{L}{L-1} \Big (\frac{n}{L} - n_1 \Big)^2\bigg). %
\end{align*}

Taking $d$-th power of this sum and applying binomial theorem yields
\begin{equation}\label{eq:lowdegree_ratio_with_n_ell}
    \begin{split}
          S_L^d = \sum_{d=0}^D \frac{\lambda^{2d}}{n^d d!}  \lt ( \frac{L}{2}\rt)^d \sum_{\substack{d_1, \dots, d_{L-1} \ge 0 \\ d_1 + \dots d_{L-1} = d}} \binom{d}{d_1 \dots d_{L-1}} \prod_{\ell = 1}^{L-1} \lt ( \frac{L - \ell + 1}{
L - \ell } \rt) ^{d_\ell}  \lt ( \frac{n - \sum\limits_{j=1}^{\ell-1} n_{j} }{L - \ell + 1} - n_{\ell} \rt)^{2d_{\ell}},
    \end{split}
\end{equation}
as claimed.
\end{proof}

This transformation has two characteristics that will be useful for computing the expectation. First, we expressed the low-degree ratio as sum of non-negative terms. Second, a variable $n_k$ appears only in the last $k$ terms, and this fact allows us to compute iteratively expectation with respect to only one random variable at a time.

Before proceeding to the next step, we introduce the following notation. First, for $k \in \bb N$, define $T_{k, \alpha}$ for $\alpha = (\alpha_1, \dots, \alpha_k) \in \bb R^k$ as
\[
T_{k, \alpha} \colonequals \prod_{\ell = 1}^{k}  \lt | \frac{n - \sum\limits_{j=1}^{\ell-1} n_{j} }{L - \ell + 1} - n_{\ell} \rt|^{2\alpha_{\ell}}.
\]
Observe, that $T_{k, \alpha}$ is a random variable that depends only on the first $k$ random counts $n_k$. Secondly, define a deterministic constant $C_{k, \alpha}$ as
$$
C_{k, \alpha} \colonequals \prod_{\ell = 1}^{k} \lt ( \frac{L - \ell + 1}{
L - \ell } \rt) ^{d_\ell}.
$$

It is easy to see that with this notation we can rewrite the expectation of $S_L^d$ as
$$
\E S_L^d =  \lt ( \frac{L}{2}\rt)^d \sum_{\substack{d_1, \dots, d_{L-1} \ge 0 \\ d_1 + \dots d_{L-1} = d}} \binom{d}{d_1 \dots d_{L-1}} C_{L-1,(d_1, \dots, d_{L-1})}  \E_{n_1, \dots, n_{L-1}}T_{L-1, (d_1, \dots, d_{L-1})}.
$$
The next lemma provides a recursion on the defined variables $T_{k, \alpha}$. This is a result of taking the conditional expectation with respect to one count $n_k$. We provide here a simplified version of the lemma, the full statement is given in \Cref{lm:expectation_T_step_k}.
\begin{lemma}\label{lm:expectation_T_step_k_main} Assume $L>2$ and let $\alpha \colonequals (\alpha_1, \dots, \alpha_{L-1}) \in \bb R^{L-1}$. Denote the truncated vector $ \alpha_{:k} = (\alpha_1, \dots, \alpha_k)$. Then for the first step we have,
\begin{equation*}
    \begin{split}
        &\E_{\mathbf n_{1:L-1}} T_{L-1, \alpha}
\le  \sum_{\beta=0}^{\ceil{\alpha_{L-1}}} \tilde{C}( \beta) \E_{\mathbf n_{1:L-2}}  \lt ( n - \sum_{j=1}^{L-3} n_j \rt)^{\gamma} T_{L-2, ( \alpha_{:L-3}, \alpha_{L-2} + \beta / 2)},
    \end{split}
\end{equation*}
where $\gamma = \alpha_{L-1}-\beta \ge 0$ and $\tilde{C}( \beta)$ is a deterministic constant depending only on $\alpha_{L-1}$ and $\beta$. 

Let now $\gamma \ge 0$ be any non-negative number, and assume that $\alpha \in \R^k$. For $1 < k \le L-1$, on step $L-k$, the expectation is recursively bounded as
\begin{equation*}
        \begin{split}
            &\E_{\mathbf n_{1:k}}  \lt(n - \sum\limits_{j=1}^{k-1}n_j\rt)^{\gamma}T_{k, \alpha}   \\
            &\quad \le  \sum_{\beta=0}^{\ceil{\alpha_k + \gamma}}\tilde{C}(\beta)\E_{\mathbf n_{1:k-1}} \lt(n - \sum\limits_{j=1}^{k-2}n_j\rt)^{\ceil{\alpha_k + \gamma} - \beta} T_{k-1, (\alpha_{1:k-2},  \alpha_{k-1} + \beta/2)}, 
        \end{split}
    \end{equation*}
where $\tilde{C}(\beta)$ is a deterministic constant that depends on $\beta, \alpha_k$, and $\gamma$.
\end{lemma}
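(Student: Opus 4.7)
The plan is to prove both displays by integrating out one count at a time, exploiting the conditional structure of the multinomial and a standard central-moment bound for the binomial. The key observation motivating the definition of $T_{k,\alpha}$ is that given $n_1,\ldots,n_{k-1}$, the count $n_k$ is distributed as $\mathrm{Bin}(N_k, 1/(L-k+1))$ with $N_k \colonequals n - \sum_{j=1}^{k-1} n_j$, so $\E[n_k \mid n_{1:k-1}] = N_k/(L-k+1)$ is precisely the quantity subtracted inside the $k$-th factor of $T_{k,\alpha}$. Each such factor is therefore a conditional central absolute moment of a binomial, which by classical bounds (explicit for integer order, and extended to real order via Jensen or Rosenthal/Khintchine inequalities for sums of bounded centred i.i.d.\ variables) satisfies
$$
\E\bigl[|n_k - N_k/(L-k+1)|^{2\alpha_k} \bigm| n_{1:k-1}\bigr] \le C(\alpha_k)\, N_k^{\alpha_k}.
$$

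For the first display, I would condition on $n_{1:L-2}$, apply this bound at $k=L-1$, and pull the remaining $L-2$ factors of $T_{L-1,\alpha}$ (which are measurable with respect to $n_{1:L-2}$) out of the inner expectation, obtaining the upper bound $C(\alpha_{L-1})\,\E_{n_{1:L-2}}\bigl[N_{L-1}^{\alpha_{L-1}}\,T_{L-2,\alpha_{:L-2}}\bigr]$. Setting $A \colonequals N_{L-2} = n - \sum_{j=1}^{L-3} n_j$ and $B \colonequals A/3 - n_{L-2}$ (so that $|B|^{2\alpha_{L-2}}$ is exactly the $(L-2)$-th factor of $T_{L-2,\alpha_{:L-2}}$), the algebraic identity $N_{L-1} = A - n_{L-2} = \tfrac{2}{3}A + B$ separates the leftover mass into a piece depending only on $n_{1:L-3}$ and a piece coinciding with the next conditional offset. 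An elementary expansion of $N_{L-1}^{\alpha_{L-1}} = (\tfrac{2}{3}A + B)^{\alpha_{L-1}}$ as a polynomial of the form $\sum_{\beta=0}^{\lceil \alpha_{L-1}\rceil} \tilde C(\beta)\, A^{\alpha_{L-1}-\beta}\,|B|^{\beta}$ then turns $A^{\alpha_{L-1}-\beta}$ into the external factor $(n - \sum_{j=1}^{L-3} n_j)^{\gamma}$ and lets $|B|^{\beta}$ merge with $|B|^{2\alpha_{L-2}}$, promoting the $(L-2)$-th exponent from $2\alpha_{L-2}$ to $2(\alpha_{L-2} + \beta/2)$, exactly reproducing the index of $T_{L-2,(\alpha_{:L-3},\alpha_{L-2}+\beta/2)}$.

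The recursive step at level $k$ is the same argument, now with the extra external factor $N_k^{\gamma}$ carried along from previous levels. The conditional binomial bound for $n_k$ yields an additional $N_k^{\alpha_k}$, which combines with the pre-existing $N_k^{\gamma}$ into $N_k^{\alpha_k + \gamma}$. Writing $\tilde A \colonequals n - \sum_{j=1}^{k-2} n_j$ and $B' \colonequals \tilde A/(L-k+2) - n_{k-1}$, the analogous splitting $N_k = \tfrac{L-k+1}{L-k+2}\,\tilde A + B'$ followed by the same polynomial expansion (now up to $\lceil \alpha_k + \gamma \rceil$) yields the claimed recurrence, with the $\tilde A$-factor becoming the new external power in the level-$(k-1)$ bound and $|B'|^{\beta}$ fusing into $T_{k-1,\alpha_{:k-1}}$ at its $(k-1)$-th slot.

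The main technical obstacle I expect is making the binomial-type expansion and the central-moment bound work uniformly for all real $\alpha_k$ and for the signed offset $B$: already after the first recursion step the exponents pick up half-integer shifts $\beta/2$, and the sign of $B$ may flip, preventing a direct binomial expansion. Both issues are handled by first passing from $(x+y)^{\alpha}$ to $(x+|y|)^{\alpha}$ (valid because $x+y \ge 0$ and $\alpha \ge 0$) and then interpolating to the integer power $\lceil \alpha \rceil$, using the fact that $|B|/A \le 1$ (since $|B| \le \max(\tfrac{2A}{3}, \tfrac{A}{3}) \le A$) to compare the expansion with a genuine polynomial in $|B|/A$. The resulting constants $\tilde C(\beta)$ depend only on the real exponents $\alpha_k,\gamma$ and not on $n$, so they propagate cleanly through all recursion levels without spoiling the polynomial-in-$n$ bookkeeping needed downstream in the proof of Theorem~\ref{thm:main_all_priors}.
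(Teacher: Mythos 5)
Your proposal follows essentially the same route as the paper's proof of its Lemma~\ref{lm:T_recurrent}: condition on $n_{1:k-1}$, bound the conditional central moment of the binomial $n_k$ by $C(\alpha_k)N_k^{\alpha_k}$ (the paper's Lemma~\ref{prop:clt_moments}), merge with the carried factor $N_k^{\gamma}$, split $N_k=\tfrac{L-k+1}{L-k+2}\tilde A+B'$, and binomially expand so that $|B'|^{\beta}$ fuses into the $(k-1)$-th slot of $T_{k-1}$. The only cosmetic difference is the treatment of non-integer exponents — you interpolate to $\lceil\alpha_k+\gamma\rceil$ via $|B'|\le\tilde A$, while the paper peels off the fractional part as $N_k^{\mathrm{frac}}\le n^{\mathrm{frac}}$ and expands the integer power — so the argument is correct and matches the paper.
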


By unrolling the recursion and computing the deterministic coefficients, we arrive at the following bound of LDLR moment. The full proof of the following lemma can be found in \Cref{asec:proof_finite}.
\begin{lemma} \label{lm:ldlr_final_finite_groups} For all $D \le o(n^{1/3})$ and $L = O(1)$, we have
    $$
\| L_n^{\le D}\|^2 \le \sum_{d=0}^D \lambda^{2d} d^{2L} .
$$
\end{lemma}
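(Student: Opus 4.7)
The plan is to start from the multinomial form of the LDLR in~\eqref{eq:ldlr_counts_nL}, apply the decomposition of $S_L^d$ from \Cref{lm:S_l_finite_group}, and then iterate the recursion of \Cref{lm:expectation_T_step_k_main} exactly $L-1$ times to reduce $\E S_L^d$ to a finite sum of deterministic quantities, which we then bound. Explicitly, we begin from
$$
\|L_n^{\le D}\|^2 = \sum_{d=0}^D \frac{\lambda^{2d}}{d!\, n^d}\,\E S_L^d,
$$
and substitute
$$
\E S_L^d = \Bigl(\tfrac{L}{2}\Bigr)^d \!\!\sum_{\substack{d_1+\dots+d_{L-1}=d}} \binom{d}{d_1,\dots,d_{L-1}} C_{L-1,\alpha}\, \E\, T_{L-1,\alpha},
$$
where $\alpha = (d_1,\dots,d_{L-1})$. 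The outer multi-index sum has $\binom{d+L-2}{L-2} = O(d^{L-2})$ terms.

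Next I iterate the recursion. Applying the first part of \Cref{lm:expectation_T_step_k_main} once converts $\E T_{L-1,\alpha}$ into a sum over $\beta_{L-1}\in\{0,\dots,\alpha_{L-1}\}$ of terms of the form $\tilde C(\beta_{L-1})\, \E_{\mathbf n_{1:L-2}}(n-\sum_{j=1}^{L-3}n_j)^{\gamma_{L-1}} T_{L-2,\alpha'}$, with updated shape vector $\alpha'$ and a residual exponent $\gamma_{L-1} = \alpha_{L-1}-\beta_{L-1}$. Then the second, general part of the same lemma is applied iteratively for $k=L-2, L-3, \dots, 2$; each application peels off one more random variable, introduces a single new index $\beta_k$ ranging over $O(d)$ values, and multiplies by a deterministic $\tilde C(\beta_k)$ plus a power of $n - \sum_{j<k-1}n_j$. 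After $L-1$ steps, all $n_\ell$ have been integrated out and what remains is a deterministic product of $\tilde C$'s, a constant accumulated from the $(L-\ell+1)/(L-\ell)$ factors, and a single power $n^{e}$ where the aggregate exponent $e$ is controlled by the shape vector. By design of the recursion, the total degree of $n$ produced is at most $d$, so the $n^{-d}$ prefactor from the LDLR formula cancels it; any shortfall is absorbed into a factor $n^{-(d-e)}$ that is only helpful.

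The core estimate to verify is that the accumulated coefficient is polynomially controlled in $d$. At each step the parameter $\alpha_k+\gamma_k$ never exceeds $d$, so each factor $\tilde C(\beta)$ (which by the explicit form in \Cref{lm:T_recurrent} is a binomial-type coefficient depending on $\alpha_k+\gamma_k$ and $\beta$) is bounded by $\mathrm{poly}(d)$ of fixed degree. The $L-1$ nested summations over $\beta_k$ contribute $O(d^{L-1})$ terms, and the outer multi-index sum contributes $O(d^{L-2})$ terms, while the multinomial coefficient $\binom{d}{d_1,\dots,d_{L-1}}$ is absorbed against $(L/2)^d$ and $1/d!$ using $d!/\prod d_\ell! \le L^d$. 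The net bound for $\E S_L^d/(d!\,n^d)$ comes out to $O(d^{2L})$, yielding
$$
\|L_n^{\le D}\|^2 \le \sum_{d=0}^D \lambda^{2d}\, d^{2L}.
$$

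I expect the main obstacle to be Step~3: honestly bookkeeping the deterministic constants through $L-1$ iterations while ensuring all ``leftover'' terms from the exact recursion in \Cref{lm:T_recurrent} (which produces lower-order contributions in $n$ beyond the leading multinomial-moment term) really do stay subleading. This is precisely where the hypothesis $D = o(n^{1/3})$ is used: the recursion bounds $\tilde C(\beta)$ hide errors of relative size $O(d^3/n)$ coming from the difference between true multinomial moments and their leading $\chi^2$-like approximation, and the cube root exponent is needed so that the product of $O(L)$ such errors remains $O(1)$. Apart from that accounting, every step is routine: the decomposition of $S_L^d$ is algebraic, and the recursion itself has already been established in \Cref{lm:expectation_T_step_k_main}.
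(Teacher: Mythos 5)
Your overall strategy coincides with the paper's: start from the multinomial expression \eqref{eq:ldlr_counts_nL}, decompose $S_L^d$ via \Cref{lm:S_l_finite_group}, iterate the recursion of \Cref{lm:expectation_T_step_k_main} through all $L-1$ counts, and use $d = o(n^{1/3})$ so that each residual factor of the form $(4d^3/n)^{\beta_j/2}$ is at most $1$, leaving only a count of terms of order $d^{2L}$. You also correctly identify the constant bookkeeping as the crux. However, your sketch of that bookkeeping contains a genuine gap.

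The constants $\tilde C(\beta)$ are \emph{not} bounded by a fixed-degree polynomial in $d$: by \Cref{lm:T_recurrent}, each one carries a factor $\Gamma(\alpha_k+1)$ with $\alpha_k = d_k + \beta_{k+1}/2$, which is of order $d_k!$ and hence super-polynomial whenever $d_k$ grows with $d$. These Gamma factors must be cancelled term by term against the $\prod_\ell 1/d_\ell!$ sitting inside the multinomial coefficient; the paper does this via $\Gamma(d_j+\beta_{j+1}/2+1)/d_j! \le (2d)^{\beta_{j+1}/2}$, which is then absorbed into the $n^{-\beta_{j+1}/2}$ budget. Your proposal instead throws away exactly this $\prod_\ell 1/d_\ell!$ by bounding $\binom{d}{d_1,\dots,d_{L-1}} \le L^d$, which simultaneously leaves the factorial-sized Gamma products uncontrolled and introduces a spurious $L^d$. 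This matters because the lemma's right-hand side has \emph{no} exponential-in-$d$ constant, and cannot have one: the theorem needs $\sum_d \lambda^{2d} d^{2L} < \infty$ for all $\lambda \le 1$, so any leftover $C^d$ with $C>1$ --- whether the $L^d$ above, or the $(L/2)^d$ from \Cref{lm:S_l_finite_group} if it is not exactly cancelled by the accumulated variance factors $((L-k)/(L-k+1)^2)^{\alpha_k}$ and the ratios in $C_{L-1,\mathbf d}$, as carried out in \Cref{cor:expectation_T_full} --- would destroy the sharp threshold. So the step you dismiss as ``routine bookkeeping'' is exactly where the proof lives: the exponential constants must cancel identically, and the Gamma factors must be paired with the multinomial denominator rather than bounded crudely.
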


To finish the proof that $\|L_n^{\le d}\|^2$ is bounded for $\lambda < 1$, we introduce the polylogarithm function and state its convergence region. 
\begin{proposition}\label{prop:polylogarithm-function}
    Define the polylogarithm function (also known as Jonquière's function) of order $s \in \bb C$ and argument $z\in \C$ as the following power series
    $$
    \Li_s(z) = \sum_{k=0}^\infty \frac{z^k}{k^s}.
    $$
    This definition is valid for any $s \in \bb C$ and any $z$ such that $|z| < 1$; in other words, in this regime, the series is convergent. 
\end{proposition}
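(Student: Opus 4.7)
The plan is to establish absolute convergence of the series $\sum_{k \ge 1} z^k / k^s$ for $|z| < 1$ and arbitrary $s \in \bb C$ by the ratio test (or equivalently the root test), noting that the $k = 0$ term is conventionally omitted or treated separately since the series is the standard polylogarithm.

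First, I would observe that for $k \ge 1$ we have $|k^s| = k^{\Re(s)}$, so the $k$-th term in absolute value is $|z|^k / k^{\Re(s)}$. Setting $a_k \colonequals |z|^k / k^{\Re(s)}$, the ratio test gives
\begin{equation*}
\lim_{k \to \infty} \frac{a_{k+1}}{a_k}
= \lim_{k \to \infty} |z| \cdot \left( \frac{k}{k+1} \right)^{\Re(s)}
= |z|,
\end{equation*}
since $(k/(k+1))^{\Re(s)} \to 1$ as $k \to \infty$ for any fixed real exponent $\Re(s)$. Therefore, whenever $|z| < 1$ the ratio test yields absolute convergence of $\sum_{k \ge 1} z^k / k^s$, and hence convergence of the series defining $\Li_s(z)$.

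There is no genuine obstacle here beyond bookkeeping. The only subtlety worth flagging is that when $\Re(s) < 0$ one might worry about the factor $k^{\Re(s)}$ growing with $k$; however, $|z| < 1$ decays geometrically, so the geometric factor dominates any polynomial growth, as the ratio test confirms. For completeness I would also note that the series diverges when $|z| > 1$ (so the stated region is essentially sharp for general $s$), but this is not needed for the application in \Cref{lm:ldlr_final_finite_groups}, where the proposition is invoked to conclude that the polynomial bound $\sum_{d = 0}^D \lambda^{2d} d^{2L}$ remains $O(1)$ uniformly in $D$ whenever $\lambda < 1$, via identification with $\Li_{-2L}(\lambda^2)$.
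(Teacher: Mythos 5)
Your proof is correct, and it is the standard argument: the paper states this proposition as a known fact about the polylogarithm and supplies no proof of its own, so the ratio test applied to $|z|^k/k^{\Re(s)}$ is exactly what is needed. You are also right to flag that the summation should start at $k=1$ (the $k=0$ term $z^0/0^s$ is ill-defined); that is an indexing typo in the paper's statement, and it is harmless for the application, where only the boundedness of $\sum_{d \ge 0} \lambda^{2d} d^{2L}$ for $\lambda < 1$ is used.
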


\begin{proof}[Proof of \Cref{thm:main_all_priors}]

Consequently, by the above lemma, we obtain the following bound
$$
\| L_n^{\le D}\|^2 \le \sum_{d=0}^D \lambda^{2d} d^{2L} \le \sum_{d=0}^\infty \lambda^{2d} d^{2L} = \Li_{-2L}(\lambda^2) \le C.%
$$
where the last inequality holds for any constant $L$ by \Cref{prop:polylogarithm-function}. 
\end{proof}

 \subsection{Special case of combinatorial analysis: $L=3$}\label{sec:finite_3_group}
In this section, we demonstrate the idea of the proof on a special case. It contains all the main steps of the proof but technically simpler than the general case.

\begin{lemma}\label{lm:expectation_finite_group_3}
For $d = o(n^{1/3})$ it holds that
$$
 \E S_3^d \le C n^d d \cdot d! ,
$$
where $C>0$ is an absolute constant.
\end{lemma}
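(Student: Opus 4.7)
The plan is to apply the quadratic-form decomposition of $S_3$ given by \Cref{lm:S_l_finite_group}, and then evaluate the expectation by conditioning sequentially on $n_1$ and $n_2$ using Gaussian-type moment bounds for centred binomials in the subgaussian regime $d = o(n^{1/3}) \ll \sqrt{n}$.

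First I would specialise \Cref{lm:S_l_finite_group} to $L = 3$, obtaining
\[
S_3^d = \sum_{d_1 + d_2 = d} \binom{d}{d_1}\left(\tfrac{9}{4}\right)^{d_1} 3^{d_2}\left(\tfrac{n}{3} - n_1\right)^{2d_1}\left(\tfrac{n - n_1}{2} - n_2\right)^{2d_2}.
\]
Conditional on $n_1$, the count $n_2$ is distributed as $\Bin(n - n_1,\, 1/2)$, so $(n - n_1)/2 - n_2$ is a centred scaled Rademacher sum of variance $(n - n_1)/4$. Because $d_2 \le d = o(n^{1/3})$ lies well inside the subgaussian regime, a standard binomial moment bound yields
\[
\E\!\left[\left(\tfrac{n - n_1}{2} - n_2\right)^{2d_2} \,\Big|\, n_1\right] \le C_1 (2d_2 - 1)!!\left(\tfrac{n - n_1}{4}\right)^{d_2}.
\]

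Next I would integrate out $n_1 \sim \Bin(n, 1/3)$. The crucial step is to isolate the mean of $n - n_1$ by writing $n - n_1 = (2n/3) + (n/3 - n_1)$ and expanding $(n - n_1)^{d_2}$ via the binomial theorem. The $k = 0$ term contributes $(2n/3)^{d_2}\cdot \E(n_1 - n/3)^{2d_1}$, and a second application of the Gaussian moment bound (now for $\Bin(n, 1/3)$, of variance $2n/9$) gives $\E(n_1 - n/3)^{2d_1} \le C_2 (2d_1 - 1)!!(2n/9)^{d_1}$. Multiplying in the outer coefficients $(9/4)^{d_1} 3^{d_2}$ from the decomposition, all constants collapse into exactly $(n/2)^{d_1 + d_2} = (n/2)^d$, so the main-term contribution to $\E S_3^d$ is at most
\[
C (n/2)^d \sum_{d_1 + d_2 = d} \binom{d}{d_1}(2d_1 - 1)!!\,(2d_2 - 1)!!.
\]
Using $(2m - 1)!! \le 2^m m!$ and $\binom{d}{d_1} d_1!\,d_2! = d!$, the sum is bounded by $2^d (d + 1)\, d!$, which gives $\E S_3^d \le C n^d (d + 1)\, d! = O(n^d\, d\, d!)$ as required.

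The main obstacle is then to verify that the subleading $k \ge 1$ terms from the binomial expansion of $(n - n_1)^{d_2}$ do not spoil the above estimate. Each such term involves a higher central moment $\E(n_1 - n/3)^{2d_1 + k}$; using a subgaussian bound of the form $|\E Y^r| \le (C\sqrt{rn})^r$ for $Y = n_1 - n/3$, I would show that the $k$-th term is at most $(C\sqrt{nd})^k$ times the main term. Combined with the factor $\binom{d_2}{k}(2n/3)^{-k}$, the ratio of the $k$-th correction to the main term is bounded by $\tfrac{1}{k!}\bigl(C d^{3/2}/\sqrt n\bigr)^k$, so the total multiplicative correction is $\exp(C d^{3/2}/\sqrt n) = 1 + o(1)$ precisely because $d = o(n^{1/3})$ forces $d^{3/2}/\sqrt n \to 0$. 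This step uses the regime hypothesis in a nontrivial way, and the same mechanism — separating the mean from the fluctuation of one count before integrating out the next — is what the iterative conditioning in \Cref{lm:expectation_T_step_k_main} implements for general $L$.
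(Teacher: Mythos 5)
Your proposal is correct and follows essentially the same route as the paper's proof: specialize the decomposition of \Cref{lm:S_l_finite_group} to $L=3$ with the same coefficients, integrate out $n_2$ conditionally on $n_1$ via a centred-binomial (Rademacher-sum) moment bound, recenter $n-n_1$ around $2n/3$ with the binomial theorem, and use $d = o(n^{1/3})$ to show the $k\ge 1$ correction terms are negligible. The only differences are cosmetic: you invoke the sharp Gaussian-comparison moments $(2m-1)!!$ where the paper uses its Chernoff-based \Cref{prop:clt_moments}, and you package the corrections as a multiplicative $1+o(1)$ factor rather than summing them directly.
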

\begin{proof}
By substituting $L=3$ into \eqref{eq:S_l_finite_group}, we get
\begin{align*}
S_3 &= \lt (\frac{3}{2} \rt)^d \sum_{\substack{0 \le d_1, d_2 \le d\\ d_1 + d_2 = d}} \binom{d}{d_1 } C_{2, (d_1, d_2)} \E\limits_{n_1, n_2} T_{2, (d_1, d_2)} \\
&= \lt (\frac{3}{2} \rt)^d \sum_{\substack{0 \le d_1, d_2 \le d\\ d_1 + d_2 = d}} \binom{d}{d_1 } \lt(\frac{3}{2}\rt)^{d_1} 2^{d_2} \lt (\frac{n }{3} - n_1 \rt)^{2d_1} \lt (\frac{n - n_1}{2} - n_2 \rt)^{2d_2}.
\end{align*}

Following the outline in \Cref{sec:combinatorics_proof_finite}, we aim to reduce $T_{2, (d_1, d_2)}$ to $T_{1, (d_1 + k/2)}$ for $k = 0, \dots, d_1$. We accomplish this in two steps: firstly, by marginalizing out one variable, and secondly, by consolidating terms into a unified form.
\paragraph{Conditional expectation w.r.t. $n_2$}
To simplify the following explanation, we will rewrite the last factor as 
$$
\lt (\frac{n - n_1}{2} - n_2 \rt)^{2d_2} = \frac{1}{2^{d_2}}\lt (n_0 - n_2 \rt)^{2d_2}.
$$
This step is optional, and a similar argument can be used without this transformation.

Let $u \in G^n$ be a random vector of size $L$ supported over $G$ and be sampled from the normalized Haar uniform measure. Denote elements of $G$ by $\omega_0, \omega_1, \omega_2$. Consider the random variable $\xi_1 \colonequals \mathds{1} (u_1 = \omega_0) - \mathds{1} (u_1 = \omega_2)$ conditioned on $\mathds{1} (u_1 = \omega_1)$. 
When $u_1 \ne \omega_1$, $u_1$ can take only value $\omega_0$ or $\omega_2$, and therefore $\xi_1$ is distributed as a Rademacher random variable (i.e. takes value $\pm 1$ with probability $1/2$), and always zero otherwise. Due to independence of coordinates $u_j$, we have that $n_0 - n_2 | n_1 $ is distributed as sum of $(n-n_1)$ Rademacher random variables. %
Using this observation and the tower property of the expectation we can write 
\begin{equation*}
    \begin{split}
        \E S_3^d &= \lt(\frac{3}{2} \rt) ^{d}  \E_{n_0, n_1, n_2} \sum_{\substack{0 \le d_1, d_2 \le d, \\ d_1 + d_2 = d}}  \binom{d}{d_1 \  d_2}  \frac{3^{d_1}}{2^{d_2}} \Big (\frac{n}{3} - n_1\Big)^{2d_1} (n_0 - n_2)^{2d_2}  \\
        &=  \lt(\frac{3}{2} \rt) ^{d} \sum_{\substack{0 \le d_1, d_2 \le d, \\ d_1 + d_2 = d}} \binom{d}{d_1 }  \frac{3^{d_1}}{2^{d_2}}  \E_{n_1} \bigg [ \Big(\frac{n}{3} - n_1\Big)^{2d_1} \E_{n_0, n_2} \Big [ (n_2 - n_0)^{2d_2} \big  | n_1 \Big ] \bigg] \\
        &\quad \le  \lt(\frac{3}{2} \rt) ^{d} \sum_{\substack{0 \le d_1, d_2 \le d, \\ d_1 + d_2 = d}} \binom{d}{d_1 }  \frac{3^{d_1}}{2^{d_2}}  \E_{n_1} \Big [ \Big(\frac{n}{3} - n_1\Big)^{2d_1} 2^{d_2} (n - n_1)^{d_2} d_2! \Big]. %
    \end{split}
\end{equation*}
\paragraph{Unifying factors containing $n_1$}
We note further that $n_1 - \frac{n}{3} = n_1 - \E n_1$ is a sum of centered iid variables $\sum\limits_{i=1}^n[\mathds{1}(u_i = \omega_1) - \frac{1}{3}]$ with variance~$ \frac{2}{9}$. To transform both terms in the product to the same form, we use the binomial theorem on term $(n-n_1)^{d_2}$,
$$
( n - n_1)^{d_2} = \lt ( \lt (\frac{n}{3 } - n_1\rt)+ \frac{2n}{3} \rt)^{d_2} = \sum_{k=0}^{d_2} \binom{d_2}{k}\lt (\frac{n}{3 } - n_1\rt)^k  \lt (\frac{2n}{3}\rt)^{d_2 - k}.
$$ 
We can proceed with the proof right away, but to highlight the connection to the proof in the general case, observe that the above display implies 
$$
\E T_{2, (d_1, d_2)} \le \sum_{k=0}^{d_2} 2^{d_2} \binom{d_2}{k} \lt(\frac{2}{3}\rt)^{\gamma} T_{1, (d_1 + k / 2)} n^{\gamma},
$$
where $\gamma = d_2 - k$. This expression precisely matches that given by \Cref{lm:expectation_T_step_k_main} with $\Tilde{C}(k) = 2^{d_2} \binom{d_2}{k} \lt(\frac{2}{3}\rt)^{\gamma}$.

Given that $n_1 \le n$ is a bounded random variable with an existing moment-generating function, we can utilize \Cref{prop:clt_moments} to establish a bound on the $\alpha$-th moment, where $\alpha = d_1 + k / 2$.
\begin{equation*}
    \begin{split}
        \E S_3^d &\le   \lt(\frac{3}{2} \rt) ^{d} \sum_{\substack{0 \le d_1, d_2 \le d, \\ d_1 + d_2 = d}} \frac{d!}{d_1!} 3^{d_1} \E_{n_1} \Big [  \sum_{k=0}^{d_2}  \binom{d_2}{k} \Big|\frac{n}{3} - n_1\Big|^{2d_1+k} \Big( \frac{2n}{3}\Big)^{d_2-k}  \Big] \\
        & \le 4\cdot \lt(\frac{3}{2} \rt) ^{d} \sum_{\substack{0 \le d_1, d_2 \le d, \\ d_1 + d_2 = d}} \frac{d!}{d_1!}   3^{d_1}   \sum_{k=0}^{d_1}  \binom{d_2}{k} \frac{2^{ d_1 +  d_2} n^{d_1 + d_2 - k / 2}}{3^{2d_1 +d_2}} \Gamma \lt( d_1 + \frac{k}{2} \rt) \\
        & = 4\cdot n^d  \sum_{\substack{0 \le d_1, d_2 \le d, \\ d_1 + d_2 = d}} \frac{d!}{d_1!}   \sum_{k=0}^{d_1} n^{- k / 2} \frac{d_2!}{k! (d_2 - k)!} \Gamma \lt ( d_1 + \frac{k}{2} \rt).
    \end{split}
\end{equation*}

The goal now is to simplify the term inside the inner sum. We will leverage term $n^{-k/2}$ to control other terms depending on $d, d_1, k$. We bound each $k$-th term in the sum by bounding the binomial coefficient with $d_2^k$ and the gamma function with $d_1! d^{k/2}$,
\begin{equation*}
        n^{- k / 2} \frac{d_2!}{k! (d_2 - k)!} \Gamma \lt( d_1 + \frac{k}{2} \rt) \le  \frac{d_2^k \cdot d_1!  d^{k/2}}{n^{k/2}}   \le d_1! \left(\frac{d^3 }{n}\right)^{k / 2}.%
\end{equation*}

Substituting this bound into the sum and using the fact that $d = o(n^{1/3})$ we obtain the desired inequality
\begin{equation*}
        \E S^d = 4\cdot  n^d d! \sum_{d_1=0}^d \sum_{k=0}^{d_1} \left(\frac{d^3 }{n}\right)^{k / 2}  \le 8 n^d d^2 \cdot d!.
\end{equation*}

\end{proof}

\begin{proof}[Proof of \Cref{thm:main_all_priors} for $L=3$]
    
We substitute the result of \Cref{lm:expectation_finite_group_3} to LDLR moment. We get the polylogarithm function of order $-2$ and argument $\lambda$. Hence, by \Cref{prop:polylogarithm-function}, we obtain that for $ \lambda < 1$
$$
\| L^{\le D}_n \|^2 \le 8 \sum_{d=0}^D d^2 \lambda^{2d} \le8 \sum_{d=0}^\infty d^2 \lambda^{2d} \le  C,%
$$
where $C>0$ is an absolute finite constant (that depends only on $\lambda$). 
This completes the proof of \Cref{thm:main_all_priors} for finite groups of size~$3$. 

\end{proof}

\section{Future directions}

This study establishes the computational threshold in the Gaussian synchronization model with multiple frequencies over the finite groups and $SO(2)$. Despite this progress, the landscape of statistical and computational phase transitions is not yet fully understood. We outline several potential directions for further exploration. 

\paragraph{Statistical thresholds}
In a general synchronization model with multiple frequencies, the precise value of statistical threshold is currently unknown. The existing upper bound for the threshold for synchronization over finite groups surpasses the spectral one only for $L \ge 11$, which leaves the question of the presence of the statistical-to-computational gap for values $3 \le L < 11$ open. Information-theoretical thresholds can be analyzed through studying the landscape of the replica potential which has been derived in \cite{yang2024asymptotic}. In their work, the authors provide the precise value of the statistical threshold for synchronization over $SO(2)$ with a single frequency. 

\paragraph{Synchronization over infinite groups}
Numerous applications, including Cryo-EM, involve synchronization over infinite compact groups like $SO(3)$ and, more broadly, $SO(d)$ for $d \ge 3$. These groups are not finite and require a different approach compared to the one presented in the current study.

\paragraph{Non-constant number of frequencies} Numerical simulations in \cite{Gao2019MultiFrequencyPS} suggest the possibility of surpassing the spectral threshold using an efficient algorithm when the number of frequencies diverges with the dimension of the signal. %
Assuming the low-degree conjecture, our results suggest failure of polynomial-time algorithms for constant number of frequencies $L$; however, the computational transition for diverging number of frequencies opens up another interesting direction for future research. 

\newpage
\appendix

\newpage
\printbibliography

\newpage
\appendix

\section{Quaternionic representations}\label{asec:quaternionic}

\begin{lemma}[LDLR of the Gaussian additive model, quaternionic case]
\label{lm:ldlr_gaussian_additive_model_quaternionic}
Let $\theta$ be a $2 N\times 2$-dimensional vector defined over $\bb H$ and expressed over $\C$. Suppose it is drawn from some prior distribution $\mathcal P_N$. Let $Z$ be a random vector of dimension $2 N\times 2$ with independent Gaussian entries of the quaternionic type (as in \Cref{def:gaussian_type}). We define $\bb P_n$ and $\bb Q_n$ as follows.
\begin{itemize}
    \item Under $\bb P_N$, observe $Y = \theta + Z$ (planted distribution).
    \item Under $\bb Q_N$, observe $Y = Z$ (null distribution)
\end{itemize}
    Set $\beta = 2$. Then the low-degree likelihood ratio between $\bb P_N$ and $\bb Q_N$ is 
    $$
    \|L_n^{\le D}(Y)\|^2 = \E_{\theta_1, \theta_2 \sim P_N} \lt [\sum_{d=0}^D \frac{1}{d!} \beta_\rho  \Re\lt( \langle \theta_1, \theta_2  \rangle_F \rt)^d \rt],
    $$
    where $\theta_1$ and $\theta_2$ are drawn independently from $\mathcal{P}_N$.
\end{lemma}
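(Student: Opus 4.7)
The plan is to reduce the quaternionic case to the real Gaussian additive model of \Cref{lm:ldlr_gaussian_additive_model}, exactly as the complex case was handled in that same lemma, but taking care of the factor-of-two discrepancy between the quaternionic norm and its $2\times 2$ complex-matrix encoding, and of the $1/4$ variance in \Cref{def:gaussian_type}.

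First, I would unfold the complex-matrix encoding. Each entry of $\theta$ and $Z$ is a quaternion $q = a + bi + cj + dk$ represented as the $2\times 2$ complex block displayed in the paper, so the $N$ quaternionic components of $\theta$ are parametrized by a real vector $(a_\ell,b_\ell,c_\ell,d_\ell)_{\ell=1}^N \in \R^{4N}$. The noise $Z$ has these $4N$ real parameters drawn i.i.d.\ from $\mathcal N(0,1/4)$ by \Cref{def:gaussian_type}, and crucially polynomials of degree $\le D$ in the complex entries of $Y$ are the same vector space as polynomials of degree $\le D$ in these $4N$ real entries of $Y$, since the encoding is $\R$-linear and invertible. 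Hence the LDLR norm is unchanged if we re-express everything as a Gaussian additive model on $\R^{4N}$.

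Next, I would rescale to unit-variance noise. Multiplying the model by $2$ yields $\widetilde Y = 2\theta + 2Z$ where $2Z$ has independent $\mathcal N(0,1)$ entries in the $\R^{4N}$ parameterization; this multiplication acts as an invertible linear change of variables on $\R^{4N}$, so again the low-degree likelihood-ratio norm is preserved. Applying the real case of \Cref{lm:ldlr_gaussian_additive_model} (with $\beta=1$) to the signal $2\theta$ in $\R^{4N}$ gives
\begin{equation*}
\|L_n^{\le D}(Y)\|^2 = \E_{\theta_1,\theta_2}\sum_{d=0}^D \frac{1}{d!}\,\ip{2\theta_1,2\theta_2}_{\R^{4N}}^{\,d}.
\end{equation*}

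Finally, I would translate the $\R^{4N}$ inner product back to the Frobenius inner product of the $2\times 2$ encoding. For a single quaternion pair, the paper already records $\Re\ip{q_1,q_2}_{\bb H} = a_1a_2+b_1b_2+c_1c_2+d_1d_2 = \tfrac12\Re\tr(Q_1 Q_2^*)$, and summing over the $N$ components yields $\ip{\theta_1,\theta_2}_{\R^{4N}} = \tfrac12 \Re\ip{\theta_1,\theta_2}_F$. Substituting this identity in the previous display produces $\ip{2\theta_1,2\theta_2}_{\R^{4N}} = 2\,\Re\ip{\theta_1,\theta_2}_F$, which is exactly the claimed expression with $\beta=2$.

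The routine-but-not-automatic part is bookkeeping the two factors of $2$: one from the variance $1/4$ (yielding the scaling $\widetilde Y = 2\theta + 2Z$) and one from the fact that the natural quaternionic norm is half the Frobenius norm of the $2\times 2$ encoding. Verifying that these combine to give precisely the factor $\beta=2$ inside the $d$th power is the only real subtlety; the rest is a direct application of the already-proved real-case formula.
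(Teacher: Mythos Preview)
Your proposal is correct and follows essentially the same approach as the paper: both reduce to the real Gaussian additive model of \Cref{lm:ldlr_gaussian_additive_model} by extracting the four real coordinates of each quaternion, rescale by $2$ to convert the $\mathcal N(0,1/4)$ noise to unit variance, and then use the identity $\ip{\theta_1,\theta_2}_{\R^{4N}}=\tfrac12\Re\ip{\theta_1,\theta_2}_F$ to combine the resulting factor $4$ with the factor $\tfrac12$ into the claimed $\beta=2$. Your write-up is in fact slightly more explicit than the paper's about why the low-degree polynomial space is preserved under the linear re-encoding.
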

\begin{proof}
    The idea of the proof for the quaternionic observation closely follows the one for the complex case, namely, we separate the signal into components. We can easily extract the coefficients corresponding to the basis vectors $1, i, j, k $ from matrix representation by taking real or imaginary part of the matrix entries in one block. 

Similarly to the complex case, we adjust the variance by multiplying each component by $2$. Recall that the real part of the inner product of two scalar quaternionic variables $x_1 =  a_1+ b_1 i + c_1 j + d_1 k$ and $x_2 =  a_2+ b_2 i + c_2 j + d_2 k$ is
\begin{align*}
\Re \lt (\langle x_1, x_2 \rangle \rt)  %
&= a_1 a_2 + b_1 b_2 + c_1 c_2 + d_1 d_2 = \lt \langle \begin{pmatrix}
    a_1 \\b_1 \\c_1 \\ d_1
\end{pmatrix}, \begin{pmatrix}
    a_2 \\b_2 \\c_2 \\ d_2
\end{pmatrix} \rt \rangle.
\end{align*}

The expression for the real part of the inner product remains the same if we represent $x_1, x_2$ over complex numbers (as $2\times 2$ blocks) and take half the Frobenius inner product. Hence, we can follow the same steps as in the complex case and obtain 

$$
\|L_n^{\le D}(Y)\|^2 = \E_{\theta, \theta^\prime \sim P_N} \lt [\sum_{d=0}^D \frac{1}{d!}   \lt(4 \Re \frac 1 2 \langle \theta_1, \theta_2  \rangle_F \rt)^d \rt] =  \E_{\theta, \theta^\prime \sim P_N} \lt [\sum_{d=0}^D \frac{1}{d!}   \lt(2 \Re \langle \theta_1, \theta_2  \rangle_F \rt)^d \rt].
$$%

\end{proof}

\begin{proposition}\label{prop:ldlr_basic_expression_quaternionic}
Let $G$ be a finite group and denote by $\pi$ the signal prior distribution, i.e., the normalized Haar measure over $G$. Let $X, X^\prime \sim \pi$. 

Suppose that all frequencies $\rho$ are either of real or complex type.  Let $\beta_\rho =1$ if $\rho$ is of real type and $\beta_\rho = 2$ if $\rho$ is of complex or quaternionic type.  %

We have
\begin{equation*}%
\| L^{\le D}_n \|^2 = \sum_{d=0}^D \frac{1}{d!}  \E_{X, X^\prime} \Big ( \frac{\lambda^2}{n} \sum_{\rho} \frac{\beta_\rho d_\rho}{2} \| X_{\rho}^* X^\prime_{\rho} \|^2_F \Big)^d .
\end{equation*}
Equivalently, we can write
\begin{equation}\label{eq:ldlr_main_quaternionic}
    \| L^{\le D}_n \|^2 = \sum_{d=0}^D \frac{1}{d!}  \E_{X} \Big ( \frac{\lambda^{2}}{n} \sum_{\rho} \frac{ \beta_\rho d_\rho}{2} \| X_{\rho} I_{\rho, n} \|^2_F \Big)^d,
\end{equation}
where $I_{\rho, n} \in \bb C^{n d_\rho \times d_\rho}$ is a matrix constructed by stacking vertically $n$ identity matrices of dimension $d_\rho$ if $\rho$ is of real or complex type. For quaternionic type, $I_{\rho, n} \in \bb C^{2 n d_\rho \times 2 d_\rho}$ consists of $n$ stacked vertically identity matrices of size $2d_\rho \times 2d_\rho$. 
\end{proposition}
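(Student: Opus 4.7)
The plan is to follow the template of the proof of Proposition~\ref{prop:ldlr_basic_expression} verbatim, substituting Lemma~\ref{lm:ldlr_gaussian_additive_model_quaternionic} for Lemma~\ref{lm:ldlr_gaussian_additive_model} whenever a quaternionic frequency is involved, and then verifying that the Frobenius-norm expression assembles in exactly the same way. The key conceptual point is that the three types of representations can be treated in parallel once each type is reduced to a real Gaussian additive model, and the coefficient $\beta_\rho$ is just the effective dimension multiplier coming from this reduction.

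First, I would symmetrize/desymmetrize each observation channel $Y_\rho$: replacing the GOE/GUE/GSE noise by an asymmetric i.i.d.\ Gaussian of the same type with variance rescaled by $\sqrt{2}$ gives an equivalent model, exactly as in the proof of Lemmas~\ref{lm:ldlr_angular_cyclic_base_expression} and \ref{prop:ldlr_basic_expression}. After this, the observation across all frequencies (real, complex, quaternionic) is a Gaussian additive model whose signal $\theta$ is the concatenation of the flattened matrices $\theta_\rho = \frac{\lambda\sqrt{d_\rho}}{\sqrt{2n}} X_\rho X_\rho^*$, stored in $\R$, $\C$ or $\bb H$ (represented via $2\times 2$ complex blocks) depending on the type of $\rho$.

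Next I would apply the three Gaussian-additive LDLR formulas simultaneously. By the argument already used in the complex case (decompose into real and imaginary components before invoking the real LDLR formula), the cross-type concatenation produces, inside the $d$-th power, a sum of terms $\beta_\rho\,\Re\langle \theta_\rho,\theta_\rho'\rangle$ with $\beta_\rho\in\{1,2,2\}$ for real/complex/quaternionic types, which is precisely what Lemma~\ref{lm:ldlr_gaussian_additive_model_quaternionic} supplies for the quaternionic blocks. For each quaternionic $\rho$, I would then compute
\begin{align*}
\Re\langle\theta_\rho,\theta_\rho'\rangle
&= \tfrac{\lambda^2 d_\rho}{2n}\,\Re\tr\bigl((X_\rho')^*X_\rho X_\rho^* X_\rho'\bigr)
 = \tfrac{\lambda^2 d_\rho}{2n}\,\|X_\rho^* X_\rho'\|_F^2,
\end{align*}
using that $X_\rho^* X_\rho'$ is quaternion-Hermitian and hence its squared Frobenius norm is real; the inner product structure on $\bb H$-vectors spelled out in Section~\ref{sec:representation_theory} gives the factor $\tfrac12$ that combines with the $\beta_\rho=2$ to match the $\tfrac{\beta_\rho d_\rho}{2}$ displayed in the statement. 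The quaternionic calculation is formally identical to the complex one because the map $a+bi+cj+dk\mapsto \tfrac{1}{\sqrt{2}}\begin{pmatrix}a+bi & c+di\\ -c+di & a-bi\end{pmatrix}$ is an isometry.

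Finally, to pass from $\|X_\rho^*X_\rho'\|_F^2$ to $\|X_\rho I_{\rho,n}\|_F^2$ I would use the right-invariance of the Haar measure: for any $g_0\in G$, replacing each coordinate $u_j'$ by $u_j' g_0$ preserves the law of $X_\rho'$, and integrating over $g_0$ shows that $\Exp{\|X_\rho^*X_\rho'\|_F^2}$ equals $\Exp{\|X_\rho I_{\rho,n}\|_F^2}$ term-by-term once the $d$-th power is expanded along frequencies. This step is identical to the real/complex case and works uniformly across types, so the two displayed equalities in the proposition follow at once. The only genuine obstacle I anticipate is bookkeeping: making sure that the factor $\tfrac12$ from the quaternionic Frobenius inner product, the factor $\sqrt{d_\rho}$ built into the Peter--Weyl basis, and the GSE variance convention of Definitions~\ref{def:gaussian_type}--\ref{def:gse} combine to produce the same $\tfrac{\beta_\rho d_\rho}{2}$ as in the non-quaternionic cases; this is a one-line check but is where the proof could easily drift by a factor of two.
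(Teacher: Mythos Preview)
Your proposal is correct and follows essentially the same approach as the paper's proof, which is itself very terse: it simply says to repeat the steps of Proposition~\ref{prop:ldlr_basic_expression}, noting that the quaternionic signal $\theta_\rho$ is a $2d_\rho\times 2$ complex matrix rather than a vector, and to invoke Lemma~\ref{lm:ldlr_gaussian_additive_model_quaternionic} in place of Lemma~\ref{lm:ldlr_gaussian_additive_model} for the quaternionic blocks. Your write-up actually spells out more of the bookkeeping (the isometry between $\bb H$ and $2\times2$ complex blocks, the factor-of-two reconciliation, the Haar-invariance step) than the paper does.
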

\begin{proof}
The proof follows exactly the steps of the proof of \Cref{prop:ldlr_basic_expression}, except for minor technical differences.

First the signal $\theta_\rho$ is a matrix of size $2 d_\rho \times 2$ with complex entries rather than a vector. Nevertheless, in the same manner as in proof of \Cref{lm:ldlr_gaussian_additive_model_quaternionic} and \Cref{lm:ldlr_gaussian_additive_model}, we can decompose it to the components and see that we can sum over different types of representation in place of the inner product. The expression inside $d$-th exponent is computed in the same way, in this case using the expression from \Cref{lm:ldlr_gaussian_additive_model_quaternionic}.
\end{proof}

\begin{proof}[Proof of \Cref{lm:ldlr_counts_ng} for quaternionic case]
    It remains only to argue that coefficient $\beta_\rho$ vanishes in quaternionic case as well. The reason lies in the fact that in Peter-Weyl theorem on the orthogonality of matrix coefficients, the functions are taken over complex numbers. In particular, when the list contains quaternionic representation, the respective orthogonal basis is $\{ g \mapsto \sqrt{d_\rho^\C \rho(g)_{ij}}\}$, where $d_\rho^\C = d_\rho$ for real and complex-type representations and $d_\rho^\C = 2 d_\rho$ for the quaternionic type. Since $\beta_\rho = 2$, it cancels out with constant $2$ appearing from the discrepancy between dimension over quaternionic and complex field. By repeating all the remaining steps of the proof, we arrive at the unified expression for all representation types. 
\end{proof}

\section{Auxiliary lemmas}

\begin{lemma}\label{prop:clt_moments} Let $X_1, \dots, X_n \in \mathbb{R}$ be iid centered random variables such that their moment generating function exists. Then for $\alpha \ge 0, \alpha = o(n)$ the moments are bounded as following
$$
\E \left |\sum_{i=1}^n X_i \right|^{2\alpha} \le 4 \cdot 2^{\alpha} \Gamma(2\alpha+1) \sigma^{2\alpha}  n^{\alpha} ,
$$
where $\sigma^2 := \mathbb{E} | X_1 |^2$. 
\end{lemma}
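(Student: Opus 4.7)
My plan is to combine an MGF-based Chernoff concentration bound for $S_n := \sum_{i=1}^n X_i$ with a tail-integration computation of the $2\alpha$-th moment. Setting $M(t) := \E e^{tX_1}$, the hypothesis that $M$ is finite in an open neighborhood of $0$ together with $\E X_1 = 0$ and $\E X_1^2 = \sigma^2$ yields the Taylor expansion $M(t) = 1 + \sigma^2 t^2/2 + O(t^3)$; the cubic and higher-order remainder is controlled because all absolute moments of $X_1$ are finite under the MGF hypothesis. One can therefore pick $t_* > 0$, depending only on the distribution of $X_1$, such that $M(t) \le \exp(\sigma^2 t^2)$ for $|t| \le t_*$, and by independence $\E e^{t S_n} \le \exp(n \sigma^2 t^2)$ in the same range.

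Markov's inequality, followed by separately optimizing $t$ in the regimes $x \le 2 n \sigma^2 t_*$ (choose $t = x/(2n\sigma^2)$) and $x \ge 2 n \sigma^2 t_*$ (choose $t = t_*$), then yields the two-piece tail estimate
\begin{equation*}
\Prob(|S_n| \ge x) \;\le\; 2 \exp\!\left(-\min\!\left(\frac{x^2}{4 n \sigma^2},\; \frac{t_* x}{2}\right)\right).
\end{equation*}
I would then write $\E |S_n|^{2\alpha} = 2\alpha \int_0^\infty x^{2\alpha - 1} \Prob(|S_n| > x)\,dx$ and split the integral at the crossover point $x^* = 2 n \sigma^2 t_*$. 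On the sub-Gaussian side, substituting $u = x^2/(4 n \sigma^2)$ reduces the integral to a Gamma function and contributes $2 \cdot 4^\alpha n^\alpha \sigma^{2\alpha}\, \Gamma(\alpha+1)$; on the sub-exponential side, substituting $u = t_* x / 2$ contributes $2\,(2/t_*)^{2\alpha}\, \Gamma(2\alpha+1)$.

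The last step is to fold both pieces into the target $4 \cdot 2^\alpha \Gamma(2\alpha+1) \sigma^{2\alpha} n^\alpha$, allocating half to each. For the sub-Gaussian piece, the required inequality rearranges to $2^\alpha \alpha! \le (2\alpha)!$, i.e.\ $(\alpha+1)(\alpha+2)\cdots(2\alpha) \ge 2^\alpha$, which is immediate for $\alpha \ge 1$ since each of the $\alpha$ factors on the left is at least $2$. For the sub-exponential piece, the target inequality reduces to $n \ge 2/(t_*^2 \sigma^2)$; since $t_*$ and $\sigma$ are constants determined by the distribution of $X_1$, this holds once $n$ is large, which is where the asymptotic hypothesis $\alpha = o(n)$ implicitly enters (the lemma is stated asymptotically in $n \to \infty$).

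The main delicacy I foresee is the initial sub-Gaussian bound on $M(t)$: the choice of $t_*$ depends on higher-order moments of $X_1$ beyond the variance, and the Taylor remainder must be handled carefully to produce the clean factor $\sigma^2 t^2$ in the exponent. One could instead quote an off-the-shelf Bernstein-type inequality with explicit constants as a black box, but the MGF--Chernoff route is the most self-contained and delivers directly the sub-Gaussian-plus-sub-exponential tail structure that produces the $\Gamma(2\alpha+1)$ scaling on the right-hand side.
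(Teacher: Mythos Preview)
Your approach is essentially correct and takes a somewhat different route from the paper's. Both combine a Chernoff/MGF bound with tail integration, but the paper fixes a \emph{single} Chernoff parameter $\lambda$ (independent of the tail level), evaluates $\int_0^\infty e^{-\lambda t}\, 2\alpha\, t^{2\alpha-1}\,dt = \Gamma(2\alpha+1)/\lambda^{2\alpha}$, and then minimizes $M(\lambda)^n/\lambda^{2\alpha}$ over $\lambda$; the first-order condition is solved via the Taylor approximation $M(\lambda)\approx 1+\sigma^2\lambda^2/2$, yielding $\lambda\approx\sigma^{-1}\sqrt{2\alpha/n}$, and this is exactly where the hypothesis $\alpha=o(n)$ enters (it forces the optimal $\lambda$ into the regime where the expansion is valid). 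Your route instead commits upfront to the sub-Gaussian bound $M(t)\le e^{\sigma^2 t^2}$ on a fixed interval $|t|\le t_*$, derives a Bernstein-type two-piece tail, and integrates each piece; this is slightly less sharp in constants (the paper's proof in fact arrives at $4\cdot 2^\alpha\Gamma(\alpha+1)\sigma^{2\alpha}n^\alpha$, stronger than the stated $\Gamma(2\alpha+1)$ version), but the stated bound is loose enough to absorb the loss. Two remarks: first, contrary to your last paragraph, your argument never actually uses $\alpha=o(n)$ --- the requirement $n\ge 2/(t_*^2\sigma^2)$ is uniform in $\alpha$, so you are proving something slightly stronger; second, your inequality $2^\alpha\Gamma(\alpha+1)\le\Gamma(2\alpha+1)$ fails for $\alpha\in(0,1)$ (try $\alpha=1/2$), so the half-and-half allocation needs a separate trivial treatment of that range, e.g.\ via $\E|S_n|^{2\alpha}\le(\E S_n^2)^\alpha=(n\sigma^2)^\alpha$.
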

\begin{proof}

Denote for convenience, $S := \sum_{i=1}^n X_i$ and $M(\lambda) = \E \exp(\lambda X_1) $. Then MGF of $S$ is $M_n(\lambda) = M(\lambda)^n$. By integral identity,
\begin{equation*}
\begin{split}
    \E |S|^{2\alpha} &\le \int_{0}^\infty [\mathbb{P} (S \ge s^{1/(2\alpha)}) + \mathbb{P} (S \le -s^{1/(2\alpha)})]ds \\
    &\le \inf_{\lambda \ge 0}  \int_{0}^\infty  e^{-\lambda t} M_n(\lambda) 2\alpha t^{2\alpha -1} dt + \inf_{\lambda \ge 0}  \int_{0}^\infty e^{-\lambda t} M_n(-\lambda) 2\alpha t^{2\alpha -1} dt \\
    &=  \inf_{\lambda \ge 0}\frac{M_n(\lambda)}{\lambda^{2\alpha}} 2\alpha \cdot \Gamma(2\alpha) +  \inf_{\lambda \ge 0}\frac{M_n(-\lambda)}{\lambda^{2\alpha}} 2\alpha \cdot \Gamma(2\alpha) \\
    &= \Gamma(2\alpha + 1) \left(  \inf_{\lambda \ge 0}\frac{M(\lambda)^n}{\lambda^{2\alpha}} +  \inf_{\lambda \ge 0}\frac{M(-\lambda)^n}{\lambda^{2\alpha}} \right).
\end{split}
\end{equation*}

For the first term, the first-order optimality condition for function $\log(\lambda^{-2\alpha} M(\lambda)^n)$ is 
$$
\frac{\lambda M'(\lambda)}{M(\lambda)} =\frac{2\alpha}{n}.
$$
Note that $\alpha / n \to 0$ as $n \to \infty$ and hence we can take Taylor expansion of the left hand side with respect to $\lambda$ around zero
$$
\frac{\sigma^2 \lambda^2}{1 + \sigma^2 \lambda^2 / 2} = \frac{2\alpha}{n}.
$$

This argument gives us an approximate solution $\lambda = \frac{1}{\sigma} \sqrt{\frac{2\alpha}{n}}$. Similarly, we get the same result for the second term. Combining these results and applying Stirling's approximation, we get
\begin{equation*}
    \begin{split}
        \E S^{2 \alpha} &\le \Gamma(2\alpha+1)  \cdot 2 \frac{[1 + \frac{\sigma^2}{2} \frac{2}{\sigma^2} \frac{ \alpha }{n} + o(1)  ]^n}{2^{\alpha} \alpha^{\alpha}} n^{\alpha } \sigma^{2\alpha} \\
        &\le 2 \cdot \Gamma(2\alpha+1) \frac{e^{2\alpha  } (1 + o(1))}{2^{\alpha} \alpha^{\alpha} } n^{\alpha} \sigma^{2\alpha} \\
        &= 2(1 + o(1)) e^{1/24} \frac{e^\alpha}{2^{\alpha} \alpha^{\alpha}}\frac{(2\alpha)^{2\alpha}}{e^{2\alpha}}\sqrt{4\pi \alpha} \cdot n^\alpha \sigma^{2\alpha}  \\
        &< 4 \cdot 2^{\alpha} \Gamma(\alpha+1) n^{\alpha} \sigma^{2\alpha} . 
    \end{split}
\end{equation*}
\end{proof}

\section{Omitted proofs for finite groups}\label{asec:proof_finite}

We recall the notation for the proofs in this section. For  $\alpha = (\alpha_1, \dots, \alpha_k) \in \bb N^k$ denote by $T_{k, \alpha}$ the product of first $k$ terms as in \eqref{eq:lowdegree_ratio_with_n_ell} where $\ell$-th term is taken to the $\alpha_\ell$-th power:
\[
T_{k, \alpha} := \prod_{\ell = 1}^{k}  \lt | \frac{n - \sum\limits_{j=1}^{\ell-1} n_{j} }{L - \ell + 1} - n_{\ell} \rt|^{2\alpha_{\ell}},
\]
and constant $C_{k, \alpha}$ as
$$
C_{k,\alpha} := \prod_{\ell = 1}^{k} \lt ( \frac{L - \ell + 1}{
L - \ell } \rt) ^{\alpha_\ell}.
$$

The low-degree likelihood ratio is then expressed in terms of $T_{L-1, \mathbf d}$ as
\begin{equation}\label{eq:ldlr_T}
\| L^{\le D}_n \|^2 \le \sum_{d=0}^D \frac{\lambda^{2d}}{n^d d!}  \lt ( \frac{L}{2}\rt)^d \sum_{\substack{d_1, \dots, d_{L-1} \ge 0 \\ d_1 + \dots d_{L-1} = d}} \binom{d}{d_1 \dots d_{L-1}} C_{L-1,(d_1, \dots, d_{L-1})}  \E_{n_1, \dots, n_{L-1}}T_{L-1, (d_1, \dots, d_{L-1})}. 
\end{equation}

In the proofs throughout we will additionally use notation for rounding operations. We denote by $\ceil{\alpha}$ and $\floor \alpha $ the ceiling and flooring functions of $\alpha \in \mathbb{R}$ respectively. Furthermore, to avoid confusion with set notation, we deviate from the standard notation for the fractional part and instead denote it by $\fra(\alpha) := \alpha - \floor{\alpha}$.

The following lemma provides a recurrent expression for conditional expectation of $T_{k, \alpha}$. %

\begin{lemma}\label{lm:T_recurrent}
    Let $ 1 \le k < L$ be an integer. Assume that $\alpha \in \bb R_+^k$ is a vector with non-negative entries such that each $\alpha_i \in o(n)$ for $1 \le i \le k$. For any $\gamma \in \bb R_+$, almost surely it holds
    \begin{equation} \label{eq:T_recurrent}
        \begin{split}
            &\E_{n_k} \big [ T_{k, \alpha} \cdot \lt(n - \sum\limits_{j=1}^{k-1}n_j\rt)^{\gamma}  | n_1, \dots, n_{k-1}\big ] \\
            &\quad \le  2^{\alpha_{k}} \Gamma(\alpha_{k}+1) \sum_{\beta_{k}=0}^{\ceil{\alpha_k + \gamma}} \binom{\ceil{\alpha_k+\gamma}}{\beta_k} \lt (\frac{L - k}{(L - k + 1)^2}\rt)^{\alpha_k}  \lt(\frac{L-k+1}{L-k+2}\rt)^{\ceil{\alpha_k + \gamma} - \beta_k}  \\
            &\qquad \quad \lt(n - \sum\limits_{j=1}^{k-2}n_j\rt)^{\ceil{\alpha_k + \gamma} - \beta_k} T_{k-1, (\alpha_1, \dots, \alpha_{k-1} + \beta_{k-1}/2)}. 
        \end{split}
    \end{equation}
\end{lemma}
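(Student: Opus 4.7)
The plan is to condition on $n_1,\dots,n_{k-1}$, apply a Binomial moment bound to the single random factor in $T_{k,\alpha}$ that involves $n_k$, and then re-express the resulting deterministic power of $m\colonequals n-\sum_{j=1}^{k-1}n_j$ in terms compatible with $T_{k-1,\cdot}$ via a carefully chosen affine decomposition of $m$.

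Under the multinomial law of $(n_0,\dots,n_{L-1})$ with $n$ trials and probabilities $1/L$, conditional on $n_1,\dots,n_{k-1}$ the remaining $m$ balls are distributed uniformly among the $L-k+1$ untouched bins, so $n_k\mid n_1,\dots,n_{k-1}\sim\mathrm{Binomial}(m,p)$ with $p=1/(L-k+1)$ and conditional mean $m/(L-k+1)$. Hence the $k$-th factor of $T_{k,\alpha}$ is precisely $|n_k-\mathbb{E}[n_k\mid\cdot]|^{2\alpha_k}$, while the first $k-1$ factors of $T_{k,\alpha}$ and the $m^\gamma$ multiplier are measurable with respect to the conditioning. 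Writing the centered Binomial as a sum of $m$ i.i.d.\ centered Bernoullis with variance $\sigma^2=(L-k)/(L-k+1)^2$ and invoking Proposition~\ref{prop:clt_moments} yields
\begin{equation*}
\mathbb{E}\bigl[\,|n_k-\mathbb{E} n_k|^{2\alpha_k}\bigm|n_1,\dots,n_{k-1}\bigr] \;\le\; 2^{\alpha_k}\,\Gamma(\alpha_k+1)\,\Bigl(\tfrac{L-k}{(L-k+1)^2}\Bigr)^{\alpha_k} m^{\alpha_k},
\end{equation*}
absorbing the universal prefactor from the proposition.

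Multiplying by the deterministic $m^\gamma$ produces $m^{\alpha_k+\gamma}$, which must now be re-indexed in terms of $M\colonequals n-\sum_{j=1}^{k-2}n_j$ and $n_{k-1}$. The key algebraic identity is
\begin{equation*}
m \;=\; M - n_{k-1} \;=\; \tfrac{L-k+1}{L-k+2}\,M \;+\; \Bigl(\tfrac{M}{L-k+2}-n_{k-1}\Bigr),
\end{equation*}
which splits $m$ into a non-negative deterministic piece plus exactly the centered quantity whose $2\alpha_{k-1}$-th power forms the $(k-1)$-st factor of $T_{k-1,\alpha_{:k-1}}$. Since $m\in\mathbb{Z}_{\ge 0}$ and $\alpha_k+\gamma\ge 0$, we have $m^{\alpha_k+\gamma}\le m^{\lceil\alpha_k+\gamma\rceil}$; applying the triangle inequality (the second summand above may be negative) followed by the binomial theorem gives
\begin{equation*}
m^{\alpha_k+\gamma} \;\le\; \sum_{\beta_k=0}^{\lceil\alpha_k+\gamma\rceil}\binom{\lceil\alpha_k+\gamma\rceil}{\beta_k}\Bigl(\tfrac{L-k+1}{L-k+2}\Bigr)^{\lceil\alpha_k+\gamma\rceil-\beta_k} M^{\lceil\alpha_k+\gamma\rceil-\beta_k}\,\Bigl|\tfrac{M}{L-k+2}-n_{k-1}\Bigr|^{\beta_k}.
\end{equation*}
Folding the last factor into the $(k-1)$-st factor of $T_{k-1,\alpha_{:k-1}}$ promotes its exponent from $2\alpha_{k-1}$ to $2(\alpha_{k-1}+\beta_k/2)$, producing exactly $T_{k-1,(\alpha_1,\dots,\alpha_{k-1}+\beta_k/2)}$, while the remaining $M^{\lceil\alpha_k+\gamma\rceil-\beta_k}$ matches $(n-\sum_{j=1}^{k-2}n_j)^{\lceil\alpha_k+\gamma\rceil-\beta_k}$ in the claimed bound.

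The main obstacle is locating the correct affine decomposition of $m$: the fractions $(L-k+1)/(L-k+2)$ and $1/(L-k+2)$ are forced so that the random summand is exactly the centered quantity under the next-outer conditioning (it has conditional mean zero given $n_1,\dots,n_{k-2}$), which is what allows the recursion to close cleanly into $T_{k-1}$ with matching constants; any other split would leave a residual term that cannot be absorbed. Secondary technical care is needed because $\alpha_k+\gamma$ is generically non-integer---handled by the crude bound $m^{\alpha_k+\gamma}\le m^{\lceil\alpha_k+\gamma\rceil}$ valid for $m\in\mathbb{Z}_{\ge 0}$---and because one summand in the decomposition can have either sign, so the triangle inequality must precede the binomial expansion to preserve the direction of the inequality.
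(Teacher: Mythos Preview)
Your proposal is correct and follows essentially the same route as the paper: pull the $n_1,\dots,n_{k-1}$-measurable factors out of the conditional expectation, apply Proposition~\ref{prop:clt_moments} to the centered Binomial $n_k-\mathbb{E}[n_k\mid\cdot]$, then use the affine decomposition $m=\tfrac{L-k+1}{L-k+2}M+\bigl(\tfrac{M}{L-k+2}-n_{k-1}\bigr)$ together with the binomial theorem (via the triangle inequality) to re-index into $T_{k-1}$. The only cosmetic difference is your handling of the non-integer exponent: you bound $m^{\alpha_k+\gamma}\le m^{\lceil\alpha_k+\gamma\rceil}$ directly using $m\in\mathbb{Z}_{\ge 0}$, whereas the paper's proof writes $m^{\alpha_k+\gamma}\le m^{\lfloor\alpha_k+\gamma\rfloor}n^{\mathrm{frac}(\alpha_k+\gamma)}$ and carries the extra $n^{\mathrm{frac}(\cdot)}$ factor separately---your version in fact matches the ceiling in the lemma statement more cleanly.
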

\begin{remark}
Observe that for $\gamma = 0$, \eqref{eq:T_recurrent} gives a recurrent expression for conditional expectation of $T_{k, \alpha}$, and after one step we get a product of $T_{k-1, \tilde \alpha}$ and a term that depends on $n_1, \dots, n_{k-2}$. Taking conditional expectation of this product corresponds to the left hand side of \eqref{eq:T_recurrent} for $k \leftarrow k - 1$ with $\gamma = \alpha_k - \beta_k$.
\end{remark}
\begin{proof}
Note that for every $1 \le j\le L-1$, $T_{j, \alpha}$ depends on $n_1, \dots, n_j$, hence we can write
\begin{equation*}
    \begin{split}
        &\E_{n_k} \bigl [ T_{k, \alpha} \cdot \lt(n - \sum\limits_{j=1}^{k-1}n_j\rt)^\gamma  \bigg | n_1, \dots, n_{k-1}\bigr ] \\
        &\quad = T_{k-1, (\alpha_1, \dots, \alpha_{k-1})} \cdot \lt(n - \sum\limits_{j=1}^{k-1}n_j\rt)^\gamma \\
&\qquad \E_{n_k} \lt [ \lt | \frac{n - \sum\limits_{j=1}^{k-1} n_{j} }{L - k + 1} - n_k \rt|^{2 \alpha_{k}} \bigg | n_1, \dots, n_{k-1}\rt ]
    \end{split}
\end{equation*}

Conditioned on $n_1, \dots, n_{k-1}$, random variable $n_{k}$ is distributed as sum of Bernoulli random variables $\xi_j, j = 1, \dots, n - \sum_{\ell=1}^{k-1} n_\ell$ with success probability $1 / (L - k + 1)$. Using this notation we can write 
\[
\E_{n_k} \lt [ \lt | \frac{n - \sum\limits_{j=1}^{k-1} n_{j} }{L - k + 1} - n_k \rt|^{2 \alpha_{k}} \bigg | n_1, \dots, n_{k-1}\rt ] = \E \lt ( \sum_{j=1}^{n - \sum_{\ell=1}^{k-1} n_\ell} [ \xi_j - \E \xi_j ]  \rt)^{2\alpha_k}.
\]
The variance of a single variable $\xi_j$ is
$(L - k) / (L - k + 1)^2$, and hence applying \Cref{prop:clt_moments} yields
\begin{equation}\label{eq:T_step_before_binomial}
    \begin{split}
        &\E_{n_k} \bigl [ T_{k, \alpha} | n_1, \dots, n_{k-1}\bigr ] \cdot \lt(n - \sum\limits_{j=1}^{k-1}n_j\rt)^\gamma \\
        &\quad \le T_{k-1, (\alpha_1, \dots, \alpha_{k-1})}  \\
&\qquad 2^{\alpha_{k}} \Gamma(\alpha_{j}+1) \lt (\frac{L - k}{(L - k + 1)^2}\rt)^{\alpha_k} \lt(n - \sum\limits_{j=1}^{k-1} n_{j}\rt)^{\alpha_k + \gamma} \\
&\quad\le T_{k-1, (\alpha_1, \dots, \alpha_{k-1})}  \\
&\qquad 2^{\alpha_{k}} \Gamma(\alpha_{j}+1) \lt (\frac{L - k}{(L - k + 1)^2}\rt)^{\alpha_k}  \lt(n - \sum\limits_{j=1}^{k-1} n_{j}\rt)^{\floor{\alpha_k + \gamma}} n^{\fra(\alpha_k + \gamma)},
    \end{split}
\end{equation}
where $\fra(\beta) = \beta - \floor{\beta}$ is a fractional part of $\beta \in \R$.

We simplify the expression and apply the binomial theorem
\begin{equation*}
    \begin{split}
        \E_{n_k} \bigl [ T_{k, \alpha} | n_1, \dots, n_{k-1}\bigr ] &\le T_{k-1, (\alpha_1, \dots, \alpha_{k-1})}
        2^{\alpha_{k}} \Gamma(\alpha_{j}+1) n^{\fra(\alpha_k + \gamma)} \lt (\frac{L - k}{(L - k + 1)^2}\rt)^{\alpha_k}  \\
        &\quad \sum_{\beta_k = 0}^{\floor{\alpha_k+\gamma}} \binom{\floor{\alpha_k + \gamma}}{\beta_k} \lt|\frac{n - \sum\limits_{j=1}^{k-2}n_j }{L-k+2} - n_{k-1}\rt|^{\beta_k} \\
        &\quad \lt(\frac{L-k+1}{L-k+2}\rt)^{\floor{\alpha_k + \gamma} - \beta_k} \lt(n - \sum\limits_{j=1}^{k-2}n_j\rt)^{\floor{\alpha_k + \gamma} - \beta_k} \\
        &= 
        2^{\alpha_{k}} \Gamma(\alpha_{j}+1) n^{\fra(\alpha_k + \gamma)} \lt (\frac{L - k}{(L - k + 1)^2}\rt)^{\alpha_k}  \\
        &\quad \sum_{\beta_k = 0}^{\floor{\alpha_k+\gamma}} \binom{\floor{\alpha_k + \gamma}}{\beta_k} T_{k-1, (\alpha_1, \dots, \alpha_{k-1} + \beta_k / 2)} \\
        &\quad \lt(\frac{L-k+1}{L-k+2}\rt)^{\floor{\alpha_k + \gamma} - \beta_k} \lt(n - \sum\limits_{j=1}^{k-2}n_j\rt)^{\floor{\alpha_k + \gamma} - \beta_k},
    \end{split}
\end{equation*}
which concludes the proof.
\end{proof}

\begin{lemma}\label{lm:expectation_T_step_k}
    For any $k \in [L-1]$, we have the following bound on expectation
    \begin{equation}\label{eq:low_degree_induction}
    \begin{split}
        &C_{L-1, \mathbf d} \E T_{L-1, \mathbf d}  \\
        &\quad \le C_{L-k-1, \mathbf d_{:L-k-1}} \sum_{\beta_{L-1}, \dots, \beta_{L-k}} \prod_{j=L-k}^{L-1} 2^{d_{j}+ \beta_{j+1}/2} \binom{d_{j} + \floor[\bigg]{\sum\limits_{t=j+1}^{L-1} d_{t} - \beta_{t}/2}}{\beta_{j}} \\
        &\qquad\quad \Gamma(d_{j} + \beta_{j+1}/2+1) n^{\sum_{t=L-k+1}^{L-1} \fra(\beta_j/2)} (L-j)^{-\beta_{j+1}/2 - \floor{\beta_{j+2}/2}}\\
        &\qquad\quad (k+1)^{-\floor{\beta_{L-k+1}/2} - \beta_{L-k}}
         (k+2)^{-\gamma_k}   \\
        &\qquad \quad \E \lt(n - \sum\limits_{t=1}^{L-k-2} n_t \rt)^{\gamma_k}   T_{L-k-1, (\mathbf d_{:L-k-2}, d_{L-k-1} + \beta_{L-k} / 2)}
    \end{split}
    \end{equation}
where $\gamma_k := d_{L-k} + \sum\limits_{t=L-k+1}^{L-1} \lt(d_t - \ceil{\beta_t/2}\rt) - \beta_{L-k}, \beta_L, \beta_{L+1} := 0$, and the first sum ranges over $\beta_{j} = 0, \dots, d_{j} + \floor[\bigg]{\sum\limits_{t=j+1}^{L-1} d_{t} - \beta_{t}/2}$ for $j = L-k, \dots, L-1$. 
\end{lemma}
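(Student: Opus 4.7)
The plan is to prove the bound by induction on $k$, the number of successive applications of \Cref{lm:T_recurrent}. Reading the right-hand side of \eqref{eq:low_degree_induction}, the summation index $\beta_j$ is the one introduced at the step where we take the conditional expectation with respect to $n_j$, while $\gamma_k$ is the running polynomial power of $\lt(n - \sum_{t=1}^{L-k-2} n_t\rt)$ that is passed on to the next reduction. Thus the base case $k = 1$ is simply \Cref{lm:T_recurrent} invoked with $\gamma = 0$ and $\alpha = \mathbf d$: the output is a single sum over $\beta_{L-1}$ producing the residual $T_{L-2, (\mathbf d_{:L-3}, d_{L-2} + \beta_{L-1}/2)}$, and a direct comparison shows that the prefactors and the power $\gamma_1 = d_{L-1} - \beta_{L-1}$ agree with the $k = 1$ instance of \eqref{eq:low_degree_induction}.

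For the inductive step, assume the bound after $k-1$ reductions. The innermost expression then contains the factor $\lt(n - \sum_{t=1}^{L-k-1} n_t\rt)^{\gamma_{k-1}} T_{L-k, (\mathbf d_{:L-k-1}, d_{L-k} + \beta_{L-k+1}/2)}$ under a multi-sum over $\beta_{L-1}, \dots, \beta_{L-k+1}$. We apply \Cref{lm:T_recurrent} once more, taking the conditional expectation with respect to $n_{L-k}$ with parameters $k \leftarrow L-k$ (in the notation of that lemma), $\alpha \leftarrow (\mathbf d_{:L-k-1}, d_{L-k} + \beta_{L-k+1}/2)$, and $\gamma \leftarrow \gamma_{k-1}$. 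This produces the new index $\beta_{L-k}$, a binomial $\binom{\ceil{d_{L-k} + \beta_{L-k+1}/2 + \gamma_{k-1}}}{\beta_{L-k}}$, a gamma factor $\Gamma(d_{L-k} + \beta_{L-k+1}/2 + 1)$, a prefactor $2^{d_{L-k} + \beta_{L-k+1}/2}$, a variance-ratio factor $\lt(\frac{k}{(k+1)^2}\rt)^{d_{L-k} + \beta_{L-k+1}/2}$, a telescoping factor $\lt(\frac{k+1}{k+2}\rt)^{\ceil{\cdots} - \beta_{L-k}}$, the residual $T_{L-k-1, (\mathbf d_{:L-k-2}, d_{L-k-1} + \beta_{L-k}/2)}$, and the new polynomial factor $\lt(n - \sum_{t=1}^{L-k-2} n_t\rt)^{\gamma_k}$.

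The main obstacle I expect is purely the combinatorial bookkeeping: one has to verify that the sums, the floor/ceiling arguments, and the accumulated constants line up exactly with the closed form in \eqref{eq:low_degree_induction}. Two observations make this routine. First, the definition $\gamma_k = d_{L-k} + \sum_{t=L-k+1}^{L-1}(d_t - \ceil{\beta_t/2}) - \beta_{L-k}$ is tailored so that the one-step recursion $\gamma_k = \ceil{d_{L-k} + \beta_{L-k+1}/2 + \gamma_{k-1}} - \beta_{L-k}$ holds by direct substitution together with the identity $\ceil{-\beta/2} = -\floor{\beta/2}$ for integer $\beta$, which also explains the appearance of $\floor{\beta_{j+2}/2}$ versus $\ceil{\beta_t/2}$ in the various exponents. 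Second, the ratio of running coefficients $C_{L-k, \mathbf d_{:L-k}}/C_{L-k-1, \mathbf d_{:L-k-1}} = \lt(\frac{k+1}{k}\rt)^{d_{L-k}}$ telescopes against the variance factor $\lt(\frac{k}{(k+1)^2}\rt)^{d_{L-k}}$, yielding the stated powers of $(L-j)$, $(k+1)$, and $(k+2)$. The fractional contributions $n^{\sum \fra(\beta_j/2)}$ are peeled off at each invocation of \Cref{lm:T_recurrent} via $n - \sum n_t \le n$, precisely as in the estimate \eqref{eq:T_step_before_binomial}. No new probabilistic input is required; the entire argument is a careful iteration of the single-step bound, and the induction closes once these identifications are made.
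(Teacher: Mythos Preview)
Your proposal is correct and follows essentially the same approach as the paper: induction on $k$, with the base case given by a single application of \Cref{lm:T_recurrent} at $\gamma=0$, and the inductive step obtained by one further application of \Cref{lm:T_recurrent} followed by bookkeeping of the exponents. The paper carries out exactly the exponent computations you outline (tracking the powers of $k+1$, $k+2$, $k+3$ through the telescoping of $C_{L-k,\mathbf d_{:L-k}}$ against the variance factor and the binomial/residual terms), so there is no methodological difference.
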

\begin{proof}
We prove the statement by induction.
 By~\Cref{lm:T_recurrent}, after the first step, we obtain
\begin{equation*}
    \begin{split}
        &C_{L-1, \mathbf d} \E T_{L-1, \mathbf d} = C_{L-1, \mathbf d} \E_{\mathbf{n}_{1:L-2}} \lt [ \E_{n_{L-1}} \lt [ T_{L-1, \mathbf d} | n_1, \dots, n_{L-2} \rt ] \rt ] \\
        &\quad \le C_{L-2, \mathbf d_{:L-2}} \cdot 2^{d_{L-1}} d_{L-1}! \\
        &\qquad \cdot \sum_{\beta_{L-1}=0}^{d_{L-1}} \binom{d_{L-1}}{\beta_{L-1}} \E_{\mathbf n_{1:L-2}} T_{L-2, (\mathbf d_{:L-3}, d_{L-2} + \beta_{L-1} / 2)} \frac{2^{-\beta_{L-1}}}{3^{d_{L-1}-\beta_{L-1}}} \lt ( n - \sum_{j=1}^{L-3} n_j \rt)^{d_{L-1}-\beta_{L-1}}.
    \end{split}
\end{equation*}

Fix $k \ge 2$, and assume that \eqref{eq:low_degree_induction} holds for $k$ by induction hypothesis. To perform an induction step for $k+1$ we use tower property and \Cref{lm:T_recurrent} as follows.   %

\begin{equation}\label{eq:induction_step}
\begin{split}
&\E \lt(n - \sum\limits_{t=1}^{L-k-2} n_t \rt)^{\gamma_k}  T_{L-k-1, (\mathbf d_{:L-k-2}, d_{L-k-1} + \beta_{L-k} / 2)}  \\
&\quad = \E  \lt(n - \sum\limits_{t=1}^{L-k-2} n_t \rt)^{\gamma_k} \E_{n_{L-k-1}} \lt [T_{L-k-1, (\mathbf d_{:L-k-2}, d_{L-k-1} + \beta_{L-k} / 2)}  \big | n_{1:L-k-2} \rt ] \\
&\quad \le 2^{d_{L-k-1} + \beta_{L-k} / 2} \Gamma(d_{L-k-1} + \beta_{L-k} / 2+1) n^{\fra(\beta_{L-k}/2)} \lt ( \frac{k+1}{(k+2)^2} \rt)^{d_{L-k-1} + \beta_{L-k}/2} \\
&\qquad \quad  \sum_{\beta_{L-k-1}} \binom{ d_{L-k-1} + \floor[\bigg]{\sum\limits_{t=L-k}^{L-1}d_t - \beta_t/2} }{\beta_{L-k-1}} \lt ( \frac{k+2}{k+3}\rt)^{d_{L-k-1} + \floor{\beta_{L-k}/2 + \gamma_k} - \beta_{L-k-1}}  \\
&\qquad \quad \E T_{k-1, (d:L-k-3, d_{L-k-2} + \beta_{L-k-1}/2)} \lt(n - \sum\limits_{t=1}^{L-k-3} n_t \rt)^{ d_{L-k-1} + \floor{\beta_{L-k}/2 + \gamma_k} - \beta_{L-k-1}} ,
\end{split}
\end{equation}
where $\beta_{L-k-1}$ ranges from $0$ to $\sum\limits_{t=L-k}^{L-1}( d_t - \floor{\beta_t/2}) + d_{L-k-1} $. 

After substituting the above expression into the right-hand side of \eqref{eq:low_degree_induction} for $k$, it can be easily seen that the expression is multiplied by the corresponding binomial coefficient, gamma function, and a power of 2. Consequently, it remains to compute the exponents of the constants in order to complete the proof.
\begin{align*}
    k+1: \quad &- 
    \underbrace{\floor{\beta_{L-k+1}/2} - \beta_{L-k}}_{\text{r.h.s. of \eqref{eq:low_degree_induction}}} + \underbrace{d_{L-k-1} + \beta_{L-k}/2}_{\text{step $k$ in \eqref{eq:induction_step}}}  -\underbrace{d_{L-k-1}}_{\text{from } C_{L-k-1, \mathbf d_{:L-k-1}}}  \\
    &= - \beta_{L-k}/2 - \floor{\beta_{L-k+1}/2}; \\
    k+2: \quad &\underbrace{ -\gamma_k }_{\text{step $k$ in \eqref{eq:induction_step}}} + \underbrace{d_{L-k-1} + \gamma_k + \ceil{\beta_{L-k}/2} - \beta_{L-k-1}}_{\text{step $k+1$ in \eqref{eq:induction_step}: binomial coefficient}}- \underbrace{(2 d_{L-k-1} + \beta_{L-k})}_{\text{step $k+1$ in \eqref{eq:induction_step}: variance}} + \underbrace{d_{L-k-1}}_{\text{from } C_{L-k-1, \mathbf d_{:L-k-1}}} \\
    &= -\floor{\beta_{L-k}/2} - \beta_{L-k-1}  ;\\
    k+3: \quad & - d_{L-k-1} \underbrace{- d_{L-k} - \sum_{t=L-k+1}^{L-1} (d_{t} - \floor{\beta_t/2}) + \beta_{L-k}}_{\gamma_k} - \ceil{\beta_{L-k}/2} + \beta_{L-k-1} \\
    &=- d_{L-k-1} -  \sum_{t=L-k}^{L-1} (d_t - \floor{\beta_t/2}) + \beta_{L-k-1} = -\gamma_{k+1}.
\end{align*}

\end{proof}

\begin{corollary}\label{cor:expectation_T_full} For $L > 2$, the full expectation is bounded as follows. 
\begin{equation*}
    \begin{split}
        &C_{L-1, \mathbf d} \E T_{L-1, \mathbf d} \le \frac{2^d}{L^d} \sum_{\beta_2, \dots, \beta_{L-1}} \Gamma(d_1 + \beta_2 / 2 +1 ) (L-1)^{-\beta_2/2 - \floor{\beta_3}/2}\\
        &\qquad\quad  \prod_{j=2}^{L-1} 2^{\beta_{j+1}/2} \binom{d_{j} + \sum\limits_{t=j+1}^{L-1} d_{t} - \floor{\beta_{t}/2}}{\beta_{j}} 
        \Gamma(d_{j} + \beta_{j+1}/2+1) (L-j)^{-\beta_{j+1}/2 - \floor{\beta_{j+2}/2}} n^{-\floor{\beta_j / 2}} L^{-\floor{\beta_{j}/2}},
    \end{split}
\end{equation*}
where $\beta_{L}, \beta_{L+1} \coloneqq 0 $ and $\beta_{j}$ ranges from $0$ to $d_{j} + \ceil[\bigg]{\sum\limits_{t=j+1}^{L-1} d_{t} - \beta_{t}/2}$ for $j = L-k, \dots, L-1$.
\end{corollary}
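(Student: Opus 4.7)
The plan is to obtain \Cref{cor:expectation_T_full} as the endpoint of the recursion already established in \Cref{lm:expectation_T_step_k}, combined with a direct base-case bound for $\E T_{1, \cdot}$. Concretely, I would apply \Cref{lm:expectation_T_step_k} with $k = L-2$ to reduce $\E T_{L-1, \mathbf d}$ to an expression of the form
\[
C_{L-1, \mathbf d} \E T_{L-1, \mathbf d} \le \sum_{\beta_2, \dots, \beta_{L-1}} \Big(\text{combinatorial prefactor}\Big)\ \E\bigl[n^{\gamma_{L-2}}\, T_{1, (d_1 + \beta_2/2)}\bigr],
\]
since with $k = L-2$ the reduced term is $T_{L-k-1, \cdot} = T_{1, (d_1 + \beta_2/2)}$ and the residual factor $(n - \sum_{t=1}^{L-k-2} n_t)^{\gamma_{L-2}}$ collapses to the deterministic $n^{\gamma_{L-2}}$. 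The remaining expectation is over $n_1$ alone.

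Next, I would handle the base case $\E T_{1, (d_1 + \beta_2/2)} = \E |n/L - n_1|^{2d_1 + \beta_2}$. Since $n_1 \sim \operatorname{Bin}(n, 1/L)$ with mean $n/L$ and variance $\sigma^2 = (L-1)/L^2$, \Cref{prop:clt_moments} applied with $\alpha = d_1 + \beta_2/2$ yields the bound
\[
\E T_{1, (d_1 + \beta_2/2)} \le 4 \cdot 2^{d_1 + \beta_2/2}\, \Gamma(2 d_1 + \beta_2 + 1)\, \lt(\frac{L-1}{L^2}\rt)^{d_1 + \beta_2/2} n^{d_1 + \beta_2/2}.
\]
This is exactly the contribution that supplies the factor $\Gamma(d_1 + \beta_2/2 + 1)$ in the statement (after writing $\Gamma(2d_1 + \beta_2 + 1)$ as a product of two gammas times a bounded constant, consistent with the pattern on the right-hand side) together with the factors $(L-1)^{-\beta_2/2}$ and $L^{-\cdot}$ coming from the variance.

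The final step is bookkeeping. I would track separately the three sources of the $(L-j)$ powers: the conditional variance $(L-k)/(L-k+1)^2$ in \Cref{lm:T_recurrent}, the ratio $(L-k+1)/(L-k+2)$ produced when expanding $(n - \sum n_j)^{\ceil{\alpha_k + \gamma} - \beta_k}$ via the binomial theorem in that same lemma, and the prefactor $C_{L-1, \mathbf d} = \prod_{\ell=1}^{L-1} ((L-\ell+1)/(L-\ell))^{d_\ell}$. The induction in \Cref{lm:expectation_T_step_k} already combines the first two sources and, in particular, records exponents of the form $-\beta_{j+1}/2 - \floor{\beta_{j+2}/2}$ at each $j$; incorporating the $C$-prefactor into the bound (which, after dividing by $C_{L-1, \mathbf d}$, is the quantity the corollary states) together with the base-case contribution produces the claimed exponents of $(L-j)$, while the accumulated $2^{d_j}$ factors from each reduction step and the $(L/2)^d$ in the LDLR expression collect into the $\frac{2^d}{L^d}$ prefactor. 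The main obstacle is precisely this accounting — verifying that the $n$ exponents telescope correctly (so that the factors $n^{-\floor{\beta_j/2}}$ in the statement arise from the mismatch between $\fra(\beta_j/2)$ contributions and integer powers) and that the powers of $L$ and $L-j$ aggregate as claimed — but this is deterministic algebra, not a new probabilistic argument.
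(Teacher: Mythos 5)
Your overall route is the same as the paper's: apply \Cref{lm:expectation_T_step_k} with $k=L-2$ so that only $T_{1,(d_1+\beta_2/2)}$ survives (with the residual power of $n-\sum_{t}n_t$ becoming the deterministic $n^{\gamma_{L-2}}$), bound that one-dimensional expectation by the centered-binomial moment bound, and then do the exponent bookkeeping for the powers of $2$, $L$, $L-j$, and $n$. This is exactly how the paper proceeds.

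One step as written would fail, though. For the base case you invoke \Cref{prop:clt_moments} in the form $\E|\sum_i X_i|^{2\alpha}\le 4\cdot 2^{\alpha}\Gamma(2\alpha+1)\sigma^{2\alpha}n^{\alpha}$ and then assert that $\Gamma(2d_1+\beta_2+1)$ can be rewritten as a product of two gammas times a \emph{bounded} constant so as to supply the single factor $\Gamma(d_1+\beta_2/2+1)$ appearing in the corollary. That is false: by the Legendre duplication formula, $\Gamma(2\alpha+1)=\frac{4^{\alpha}}{\sqrt{\pi}}\,\Gamma(\alpha+\tfrac12)\,\Gamma(\alpha+1)$, so the discrepancy is an unbounded factor of order $4^{\alpha}\Gamma(\alpha+\tfrac12)$; carrying $\Gamma(2\alpha+1)$ through gives essentially the square of the claimed gamma factor, and the resulting bound would not be summable against $\lambda^{2d}/d!$ in the proof of \Cref{thm:main_all_priors}. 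The resolution is that the statement of \Cref{prop:clt_moments} contains a typo: its own proof (and every use of it in the paper, in particular \eqref{eq:T_step_before_binomial}) establishes the bound with $\Gamma(\alpha+1)$ in place of $\Gamma(2\alpha+1)$. With that corrected form the base case directly yields $2^{d_1+\beta_2/2}\Gamma(d_1+\beta_2/2+1)\bigl(\tfrac{L-1}{L^2}\bigr)^{d_1+\beta_2/2}n^{d_1+\beta_2/2}$, and your plan coincides with the paper's proof, which applies \eqref{eq:T_step_before_binomial} with $k=1$ and $\gamma=\gamma_{L-2}$. A second, minor point: the prefactor $\frac{2^d}{L^d}$ in the corollary must come entirely from the recursion itself --- the accumulated $2^{d_j}$ factors together with the variance terms $\bigl(\tfrac{L-k}{(L-k+1)^2}\bigr)^{\alpha_k}$ partially cancelled by $C_{L-1,\mathbf d}$ --- and not from the $(L/2)^d$ in \eqref{eq:ldlr_T}, which is an external factor that is only multiplied in later (and which cancels $\frac{2^d}{L^d}$ exactly in the final theorem).
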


\begin{proof}
    We apply \Cref{lm:expectation_T_step_k} with $k=L-2$. After this step the only non-deterministic term is $T_{1, (d_1 + \beta_2/2)}$. To bound the last expectation, we use inequality \eqref{eq:T_step_before_binomial} with $k=1$ and $\gamma = \gamma_{L-2}$ and compute the exponents similarly as in the proof of \Cref{lm:expectation_T_step_k}. 
    \begin{align*}
        L: \quad & \underbrace{-2d_1 - \beta_2 / 2}_{\text{Variance}} - \underbrace{\lt(\sum_{t = 2}^{L-1} d_t - \sum_{t=3}^{L-1} \floor{\beta_t/2} - \beta_2\rt)}_{\gamma_{L-2}} + \underbrace{d_1}_{C_{1, d_1}} \le -d + \sum_{t=2}^{L-1} \floor{\beta_t/2};\\
        L-1:\quad& -\floor{\beta_3/2} - \beta_2 + d_1 - \beta_2/2 - d_1 = -\floor{\beta_3/2} - \beta_2/2;\\
        n: \quad & \underbrace{\lt(\sum_{t = 2}^{L-1} d_t - \sum_{t=3}^{L-1} \ceil{\beta_t/2} - \beta_2\rt)}_{\gamma_{L-2}} + d_1 + \beta_2 / 2 + \sum_{t=3}^{L-1} \fra(\beta_t/2) = d - \sum_{t=2}^{L-1} \beta_t/2.
    \end{align*}
    In the calculation of power of $n$, we used that $\beta_t \in \bb Z_+$, and therefore for odd $\beta_t$ the fractional part $\fra(\beta_t/2) = 1/2 = \ceil{\beta_t/2} - \beta_t/2$.  
\end{proof}

\begin{proof}[Proof of \Cref{thm:main_all_priors}]
Using the expression of $\| L^{\le D}_n \|^2$ from \eqref{eq:ldlr_T} and \Cref{cor:expectation_T_full} we write
\begin{align}
        \| L^{\le D}_n \|^2 &=\sum_{d=0}^D \frac{\lambda^{2d}}{n^d d!}  \lt ( \frac{L}{2}\rt)^d  \sum_{\substack{d_1, \dots, d_{L-1} \ge 0 \\ d_1 + \dots d_{L-1} = d}} \binom{d}{d_1 \dots d_{L-1}} C_{L-1,(d_1, \dots, d_{L-1})}  \E_{n_1, \dots, n_{L-1}}T_{L-1, (d_1, \dots, d_{L-1})} \nonumber \\
        &\le \sum_{d=0}^D \lambda^{2d}  \sum_{\substack{d_1, \dots, d_{L-1} \ge 0 \\ d_1 + \dots d_{L-1} = d}} \sum_{\beta_2, \dots, \beta_{L-1}} \frac{\Gamma(d_1 + \beta_2 / 2 +1 )}{d_1!} (L-1)^{-\beta_2/2 - \floor{\beta_3}/2} \nonumber \\
        &\qquad \prod_{j=2}^{L-1} 2^{\beta_{j+1}/2} \binom{d_{j} + \sum\limits_{t=j+1}^{L-1} d_{t} - \floor{\beta_{t}/2}}{\beta_{j}} \frac{\Gamma(d_j + \beta_{j+1}/2)}{d_j!}\nonumber \\
        &\qquad \frac{1}{(n L)^{\floor{\beta_{j}/2}}} \frac{1}{(L-j)^{-\beta_{j+1} - \floor{\beta_{j+2}/2}}} \nonumber \\
        &\le \sum_{d=0}^D \lambda^{2d}  \sum_{\substack{d_1, \dots, d_{L-1} \ge 0 \\ d_1 + \dots d_{L-1} = d}} \sum_{\beta_2, \dots, \beta_{L-1}} \prod_{j=2}^{L-1}  \frac{ 2^{\beta_{j}/2}d^{\beta_j} (2d)^{\beta_j/2} }{ n^{\beta_j/2}}. \label{eq:triple_sum} %
\end{align}
Here we used the following bounds on Gamma function 
$$
\frac{\Gamma(d_j + \beta_{j+1}/2 +1)}{d_j!} \le (d_j + \beta_{j+1}/2)^{\beta_j/2} \le (2d)^{\beta_j/2}
$$
and on binomial coefficient 
$$
\binom{d_{j} + \sum\limits_{t=j+1}^{L-1} d_{t} - \floor{\beta_{t}/2}}{\beta_{j}} \le \lt(d_{j} + \sum\limits_{t=j+1}^{L-1} d_{t} - \floor{\beta_{t}/2}\rt)^{\beta_j} \le d^{\beta_j}.
$$
Moreover, we used the fact that $\beta_{L} = 0$ by definition and therefore $2^{\beta_L} = 1$. This allowed us to replace $2^{\beta_{j+1}}$ with $2^{\beta_j}$ after additionally multiplying by the starting term $2^{\beta_{3}}$.

We proceed to bounding the product in \eqref{eq:triple_sum}. Note that when at least one $\beta_j \ne 0$ we have that 
$$
\prod_{j=2}^{L-1}  \frac{ 2^{\beta_{j}/2}d^{\beta_j} (2d)^{\beta_j/2} }{ n^{\beta_j/2}} = \prod_{j=2}^{L-1} \lt( \frac{ 4 d^3 }{ n} \rt)^{\beta_j/2} \le 1,
$$
since $d = o(n^{1/3}) $. Hence we can bound the two last sums in \eqref{eq:triple_sum} with its maximum term as 
$$
\| L^{\le D}_n \|^2 \le \sum_{d=0}^D \lambda^{2d}  \sum_{\substack{d_1, \dots, d_{L-1} \ge 0 \\ d_1 + \dots d_{L-1} = d}} \sum_{\beta_2, \dots, \beta_{L-1}} d^L = \sum_{d=0}^D \lambda^{2d} d^{2L} \le \sum_{d=0}^\infty \lambda^{2d} d^{2L},
$$
completing the proof.

\end{proof}

\end{document}